\newcommand{\C}{\mathbb{C}}
\newcommand{\Z}{\mathbb{Z}}
\newcommand{\N}{\mathbb{N}}
\newcommand{\R}{\mathbb{R}}
\newcommand{\LL}{\mathcal{L}}
\newcommand{\II}{I_{M, \rho}^{\sigma}}
\newcommand{\Dl}{\mathcal{D} \ell}
\newcommand{\Sl}{\mathcal{S} \ell}
\newcommand{\n}{\mathcal{N}}
\newcommand{\ka}{\kappa}
\newcommand{\Id}{\text{Id}}
\newcommand{\Tr}{\text{Tr}}
\newcommand{\Span}{\text{Span}}
\newcommand{\supp}{\text{supp}}
\renewcommand{\phi}{\varphi}
\renewcommand{\theta}{\vartheta}
\renewcommand{\epsilon}{\varepsilon}
\newcommand{\1}{\mathds{1}}
\newtheorem{theo}{Theorem}[section]
\newtheorem{prop}[theo]{Proposition}
\newtheorem{coro}[theo]{Corollary}
\newtheorem{lemm}[theo]{Lemma}
\theoremstyle{definition}
\newtheorem{def1}[theo]{Definition}
\theoremstyle{remark}
\newtheorem{rema}[theo]{Remark}
\newcommand{\nwc}{\newcommand}
\nwc{\Oph}{\operatorname{Op}_\hbar}
\nwc{\la}{\langle}
\nwc{\ra}{\rangle}
\nwc{\mf}{\mathbf} %Latex (as in \bf not tilted math letters)
\nwc{\blds}{\boldsymbol} %Latex 
\nwc{\ml}{\mathcal} %Latex
\renewcommand{\Im}{\operatorname{Im}}
\renewcommand{\Re}{\operatorname{Re}}
\newcommand{\eps}{\varepsilon}
\newcommand{\tr}{\text{Tr}\,}
\newcommand{\wt}{\widetilde}
\newcommand{\wh}{\widehat}
\newcommand{\sgn}{\text{sgn  \,}}
\renewcommand{\d}{\partial}
\renewcommand{\phi}{\varphi}
\newcommand{\lsp}{\text{LSP}}
\newcommand{\black}[1]{\color{black}}
\title{Balian-Bloch wave invariants for nearly degenerate orbits}
\date{}
\author{Vadim Kaloshin $^{1}$}
\author{Illya Koval $^{1}$}
\author{Amir Vig $^{2}$}
\address{$^1$ Institute of Science and Technology Austria, Klosterneuburg, Lower Austria}
\address{$^2$ Department of Mathematics, University of Michigan, Ann Arbor, MI 48109, USA}
\email{Vadim.Kaloshin@gmail.com}
\email{Illya.Koval@ist.ac.at}
\email{Vig@umich.edu}
\begin{document}

	\maketitle

\begin{abstract}
	This paper is part I of a series in which we aim to show that the singular support of the wave trace and the length spectrum of a smooth, strictly convex, and bounded planar billiard table are generally distinct objects. We derive an asymptotic trace formula for the regularized resolvent which is dual to the wave trace and contains the same information. To do this, we consider a class of periodic orbits which have nearly degenerate Poincar\'e maps and study their leading order behavior as the deformation parameter goes to zero, generating large coefficients in the wave trace. We also keep careful track of the Maslov indices, which will allow us to match contributions of opposite signs in our subsequent paper. Each cancellation of coefficients in the resolvent trace corresponds to making the wave trace one degree smoother. The resolvent based approach is due to Balian and Bloch and was significantly expanded upon by Zelditch in a foundational series of papers \cite{Zel09}, \cite{Zel0res}, \cite{Zelditch3} and \cite{Zelditch1}.
\end{abstract}

	\section{Introduction} In this paper, we consider the Laplacian on a smooth, strictly convex and bounded planar domain $\Omega$:
	\begin{align}
		\begin{cases}
			-\Delta u = \lambda^2 u & x \in \Omega\\
			B u  = 0,&x \in \d \Omega,
		\end{cases}
	\end{align}
	$B$ is a boundary operator corresponding to Dirichlet, Neumann or Robin boundary condtions. The even wave trace $w(t)$ is defined by the real part of the distribution
	\begin{align}\label{wt}
		 \frac{1}{2\pi} \int_{-\infty}^{+\infty} \Tr e^{it \sqrt{-\Delta}} \hat{\phi}(t) dt = \Tr \phi(\sqrt{- \Delta}), \qquad \phi \in \mathcal{S}(\R)
	\end{align}
	and we write $w(t) = \Tr \cos({ t \sqrt{-\Delta}})$. Denote by $\text{LSP}(\Omega)$ the (unmarked) length spectrum, consisting of lengths of periodic billiard trajectories in $\Omega$. The Poisson relation tells us that the singular support of $w(t)$ is contained in the closure of the length spectrum together with $0$ and its reflection about the origin. Assume $L$ is isolated in $\lsp(\Omega)$ with finite multiplicity and all corresponding periodic orbits are nondegenerate. Choose $\wh{\rho} \in C_c^\infty$ to be identically equal to $1$ in a neighborhood of $L$ and satisfy $\supp(\wh{\rho}) \cap \lsp(\Omega) = L$. We then have an asymptotic expansion of the regularized resolvent trace which has form
	\begin{align}\label{rrt}
		\int_0^\infty \wh{\rho}(t) w(t) e^{ik t} dt \sim \sum_{\gamma: \text{length}(\gamma) = L} \mathcal{D}_\gamma \sum_{j = 0}^\infty B_{\gamma, j} k^{-j},
 	\end{align}
	where the righthand side is a sum over all periodic orbits $\gamma$ having length $L$. We call $\mathcal{D}_{\gamma}$ the symplectic prefactor and $B_{\gamma, j}$ the Balian-Bloch wave invariants.
	\begin{theo}\label{main}
		Let $\Omega_\eps$, $\eps \in [0, \eps_0]$ be a smooth one parameter family of domains which fixes the reflection points and angles of a period $q \geq 3$ billiard orbit $\gamma$. Assume further that $\d^2 \LL$ has rank $q-1$ in $\Omega_0$, where $\d^2 \LL$ is the Hessian of the length functional in arclength coordinates evaluated at $\gamma$, and that the perturbation makes $\gamma$ nondegenerate in $\Omega_\eps$ when $\eps > 0$, with $|\det \d^2 \LL| \sim c_\gamma \eps$ for some $c_{\gamma} > 0$. Then, the $j$th Balian-Bloch wave invariant $B_{j, \gamma}$ associated to $\gamma$ in the regularized resolvent trace \ref{rrt} is given by
		\begin{align*}
%			\frac{i ^j M_\gamma^{2j} K^{2j} L_\gamma}{(2 \pi)^{q/2} 2 q w(j)} \eps^{-3j}\prod_{i = 1}^{q}  \frac{\cos \theta_i}{ |x(s_i) - x(s_{i+1})| ^{1/2}} + O(\eps^{-3j + 1}),\\
			\frac{e^{- \frac{i \pi q}{4}}}{2q}  L_\gamma \left(\sum_{i_1, i_2, i_3 = 1}^q h_{i_1} h_{i_2} h_{i_3} \d_{i_1, i_2, i_3}^3 \LL  \right)^{2j} (c_\gamma \eps)^{-3j} \prod_{\ell = 1}^q \frac{\cos \theta_\ell }{|x(s_\ell) - x(s_{\ell+1})|} \frac{ (\pm i )^j}{w(j)} + \mathcal{R}_j,
		\end{align*}
%		Took out $k^{-j}$ and factor of $(k+i\tau)^\frac{q}{2}$ from numerator, canceling with that in prefactor.
		where
		\begin{itemize}
			\item $|x(s_\ell) - x(s_{\ell+1})|$ is the length of the $\ell$th link in the billiard trajectory.
			\item $\theta_\ell$ are the angles of reflection measured from the interior normal at $\ell$th impact point on the boundary.
			\item  $w(j) = \sum_{\mathcal{G}} \frac{1}{w(\mathcal{G})} > 0$ is a sum is over all $3$-regular graphs on $2j$ vertices, with $w(\mathcal{G})$ being the order of the automorphism group of a graph $\mathcal{G}$. In particular, it is independent of both the domain and the orbit.
			\item $\LL$ is the length functional in arclength coordinates (see Definition \ref{LF}), $L_\gamma$ is the length of $\gamma$ and the functions $h_{i} \in C^\infty(\d \Omega_0)$ are such that $h_{i_1} h_{i_2}$ is, up to a sign, the $(i_1,i_2)$ entry of the adjugate matrix of $\d^2 \LL$ at $\gamma$ in $\Omega_0$, which is independent of the deformation. The factor
			$$
			\sum_{i_1, i_2, i_3} h_{i_1} h_{i_2} h_{i_3} \d_{i_1, i_2, i_3}^3 \LL
			$$
			is, modulo an $O(\eps)$ error, the third directional derivative of $\LL$ along the null space of $\d^2 \LL$ in $\Omega_0$.
			\item The signs $\pm$ are given by $+$ if the closest eigenvalue to $0$  of $\d^2 \LL$ in $\Omega_{\eps}$ is positive and $-$ otherwise.
			\item $\mathcal{R}_j = \mathcal{R}(\gamma, \eps, j, \ka, \ka^{(1)}, \cdots, \ka^{(2j)})$, with $\ka^{(i)}$ being the $i$th derivative of the boundary curvature of $\Omega_{\eps}$, is a remainder satisfying:
			\begin{itemize}
				\item $\mathcal{R}_j = O(\eps^{-3j + 1})$.
				\item $\eps^{3j - 1} \mathcal{R}_j$ is smooth in all parameters down to $\eps = 0$.
				\item $\mathcal{R}_j$ depends on at most $2j$ derivatives of the boundary curvature in a small neighborhood of the reflection points of $\gamma$.
			\end{itemize} 
			\item When $j = 0$, $w(0) = 1$ and $\mathcal{R}_0 = 0$.
			%Need to either write $\d^2 \LL$ in terms of the Poincar\'e map and angles or introduce the length functional in/before main theorem. Probably the latter?
		\end{itemize}

		The symplectic prefactor associated to $\gamma$ is
		\begin{align*}
			\mathcal{D}_\gamma(k) =  \frac{e^{i k L_\gamma} e^{\frac{i\pi \sgn \d^2 \LL(S_\gamma)}{4}}}{\sqrt{|\det \d^2 \LL(S_\gamma)|}}.
			%			\frac{2\pi}{k} \frac{e^{i k L_\gamma}}{|M_\gamma|^2}\dt^2 e^{i \pi \mu_\gamma/4}
		\end{align*}
		%\left(2\pi \right)^{\frac{q}{2}}
%		\begin{align*}
%			\mathcal{D}_\gamma(k) = 	\left(\frac{2\pi}{k + i \tau}\right)^{\frac{q}{2}} \frac{e^{i (k + i \tau) L_\gamma} e^{\frac{i\pi \sgn \d^2 \LL(S_\gamma)}{4}}}{\sqrt{|\det \d^2 \LL(S_\gamma)|}}.
%%			\frac{2\pi}{k} \frac{e^{i k L_\gamma}}{|M_\gamma|^2}\dt^2 e^{i \pi \mu_\gamma/4}
%		\end{align*}
	\end{theo}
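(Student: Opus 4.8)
The strategy is to start from the Balian–Bloch/Zelditch resolvent trace formula, in which $\int_0^\infty \wh\rho(t) w(t) e^{ikt}\,dt$ is expressed, for the orbit $\gamma$ of period $q$, as an oscillatory integral over the boundary parametrization $(s_1,\dots,s_q)$ with phase $k\,\LL(s_1,\dots,s_q)$ and a smooth amplitude built from the curvature $\ka$ and its derivatives. The key obstruction is that when $\eps=0$ the Hessian $\d^2\LL$ is degenerate (corank one), so the usual stationary phase expansion of Zelditch does not apply directly: the determinant that would appear in the denominator vanishes. The plan is to handle this by splitting coordinates into the $(q-1)$-dimensional nondegenerate directions and the one-dimensional near-null direction, and to do stationary phase only in the nondegenerate block.

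First I would set up coordinates adapted to $\gamma$: using the assumption that $\gamma$ is fixed pointwise and anglewise by the family, choose arclength coordinates $s_\ell$ centered at the reflection points, so that $\gamma$ itself sits at the origin and the critical point of $\LL$ persists. Write $\d^2\LL$ in block form with respect to a splitting $\R^q = V \oplus \R\,v$, where $v$ spans the near-null direction (the eigenvector of the eigenvalue of size $O(\eps)$) and $V$ is its $\d^2\LL$-orthogonal complement, on which $\d^2\LL$ is uniformly nondegenerate. After a linear change of variables I would apply stationary phase in the $V$ variables, producing the factor $e^{i\pi\sgn/4}/\sqrt{|\det(\d^2\LL|_V)|}$ together with the Gaussian $\prod \cos\theta_\ell / |x(s_\ell)-x(s_{\ell+1})|$ coming from the principal amplitude (this is exactly the Hörmander/Poisson-relation leading symbol computation, as in Zelditch's papers), and leaving a one-dimensional oscillatory integral in the remaining variable $u$ along $v$. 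Crucially, the restriction of $\LL$ to the $u$-axis has a critical point at $u=0$ whose quadratic part has size $c_\gamma\eps$ (by the hypothesis $|\det\d^2\LL|\sim c_\gamma\eps$ combined with nondegeneracy on $V$, since $\det\d^2\LL = \det(\d^2\LL|_V)\cdot(\text{near-null eigenvalue})\cdot(1+O(\eps))$), and whose cubic part is $\sum h_{i_1}h_{i_2}h_{i_3}\d^3_{i_1i_2i_3}\LL$ after identifying $v$ with the rescaled adjugate vector $(h_i)$.

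The heart of the computation is then the remaining one-dimensional integral $\int e^{ik(\frac12 (c_\gamma\eps) u^2 + \frac16 A u^3 + O(u^4))} b(u)\,du$ with $A = \sum h_{i_1}h_{i_2}h_{i_3}\d^3\LL$. Rescaling $u = (c_\gamma\eps)^{?}\,\tilde u$ or better $u \mapsto u/\sqrt{k c_\gamma\eps}$ turns this into a standard expansion whose $j$th term is a universal constant times $k^{-j}$ times $A^{2j}(c_\gamma\eps)^{-3j}$; the combinatorial coefficient $1/w(j)$, a sum over $3$-regular (trivalent) graphs on $2j$ vertices weighted by automorphisms, arises from Wick-contracting the $2j$ copies of the cubic vertex $Au^3$ that appear at order $k^{-j}$ (Feynman-diagram bookkeeping for a $\phi^3$ perturbation of a Gaussian, which is where $w(j)>0$ and domain-independence come from). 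The sign $\pm i$ tracks the sign of $c_\gamma\eps$, i.e.\ of the near-null eigenvalue, through the Gaussian integral $\int e^{\pm i a u^2/2}\,du = \sqrt{2\pi/a}\,e^{\pm i\pi/4}$; collecting this with the $V$-block Maslov contribution reassembles the global prefactor $\mathcal D_\gamma$ and the $e^{-i\pi q/4}$ normalization.

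Finally I would organize the error term. Truncating the Taylor expansion of $\LL$ beyond cubic order and the amplitude beyond its principal part produces $\mathcal R_j$; counting powers of $\eps$ in the rescaling $u\sim (k c_\gamma\eps)^{-1/2}$ shows the leading term scales like $(c_\gamma\eps)^{-3j}$ and each additional vertex or amplitude derivative costs a factor $\eps$, giving $\mathcal R_j = O(\eps^{-3j+1})$; smoothness of $\eps^{3j-1}\mathcal R_j$ down to $\eps = 0$ follows because after the rescaling everything is a convergent asymptotic series in $1/(k\eps)$ with coefficients smooth in the curvature data. The dependence on at most $2j$ curvature derivatives is read off from the fact that the order-$k^{-j}$ amplitude in Zelditch's expansion involves at most $2j$ derivatives of the boundary, and the stationary phase in $V$ does not raise this count. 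The main obstacle is the middle step: correctly identifying the near-null eigenvector with $(h_i)$ (via the adjugate), verifying that the cubic coefficient is $\sum h_{i_1}h_{i_2}h_{i_3}\d^3\LL$ up to $O(\eps)$, and controlling the interaction between the $\eps$-degeneracy and the $k\to\infty$ asymptotics uniformly — i.e.\ making the double expansion in $1/k$ and $\eps$ rigorous rather than formal.
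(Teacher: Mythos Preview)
Your approach is sound and reaches the right answer, but the paper proceeds differently, and the comparison is worth recording.

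You propose to split $\R^q$ as $V \oplus \R v$ with $v$ the near-null direction, do stationary phase in the nondegenerate $(q-1)$-block, and then treat the residual one-dimensional integral as a $\phi^3$-perturbed Gaussian whose Wick expansion produces the $3$-regular graphs on $2j$ vertices. The paper does \emph{not} split coordinates. Instead it applies the full $q$-dimensional stationary phase lemma on $\Omega_\eps$ (legitimate since $\eps>0$ makes $\d^2\LL$ nondegenerate), and organizes the resulting expansion via the Feynman calculus of Axelrod: closed vertices carry factors $ik\,\d^v\LL$, edges carry $(i/k)(\d^2\LL)^{-1}_{i_1 i_2}$, and the open vertex carries derivatives of the amplitude. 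The $\eps$-asymptotics then enter through a separate lemma (the paper's Proposition on the Hessian) showing that the adjugate of $\d^2\LL_0$ has rank one and factors as $\pm h_{i_1}h_{i_2}$, so that each inverse-Hessian edge contributes $(c_\gamma\eps)^{-1}h_{i_1}h_{i_2}+O(1)$. A counting lemma then shows that at order $k^{-j}$ the diagrams with the maximal number $3j$ of edges are exactly the $3$-regular graphs on $2j$ closed vertices with the open vertex isolated, and the rank-one factorization makes the sum over labelings collapse to $\big(\sum h_{i_1}h_{i_2}h_{i_3}\d^3_{i_1 i_2 i_3}\LL\big)^{2j}$.

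What each buys: your route makes the appearance of trivalent graphs transparent (they are literally the $\phi^3$ Feynman diagrams of a one-dimensional integral), and it isolates the degeneracy cleanly. The paper's route avoids the bookkeeping of the reduced phase (your ``restriction of $\LL$ to the $u$-axis'' is really the phase after eliminating $V$, whose cubic part agrees with the naive restriction only up to $O(\eps)$ --- you noted this, but it requires care) and the normalization mismatch between the small eigenvalue and $c_\gamma\eps=\det\d^2\LL$; by working with the full inverse Hessian and its adjugate structure, the factor $(c_\gamma\eps)^{-3j}$ and the $h_i$'s come out already correctly normalized. The paper also piggybacks directly on Zelditch's machinery for the $\mathcal N_0/\mathcal N_1$ decomposition and the $|\sigma|\ge 1$ terms, which feed into the remainder $\mathcal R_j$ and the $2j$-jet bound; in your scheme you would need to argue separately that the $V$-stationary-phase step does not spoil these estimates.
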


	Our paper is part of a series in which we aim to show that the singular support of the wave trace and the length spectrum are in general distinct objects which encode different information. In part II, we will create cancellations in the wave trace by perturbing an initial domain having the properties in Theorem \ref{main} and then using the asymptotic formulas above together with a careful matching of Maslov indices, which arise as complex phases. The possibility of a smooth wave trace was first posed in \cite{DuGu75} and it was later remarked by Colin de Verdiere (see \cite{ZelditchSurvey2}) that apriori, there is no immediate reason why $w(t)$ cannot be $C^\infty$ on $\R \backslash \{0\}$; it is known to be singular at $t = 0$. In a work in progress, Hezari and Zelditch construct domains for which pairs of bouncing ball orbits give infinite order cancellations in the wave trace, going further in this direction. Our result applies to a large class of convex domains, having a rather general class of degenerate orbits; see Section \ref{PT}.
	
%	Part II of our paper will use the asymptotic structure in Theorem \ref{main} to match the contributions of different orbits having the same length but with different Maslov indices (which arise in the complex phases), so that they cancel to arbitrarily high order. The result is then that the wave trace can be made arbitrarily smooth. 
	
%	\red{add to part two of the paper... move mention of ISZ to bottom.}
%	However, our result is limited by the finite degree of smoothness.
	
%	The next theorem is a general formula for the wave invariants at a hyperbolic periodic orbit:

%	 If one could replace $m$ in Theorem \ref{main} with $\infty$, it would show that $\text{SingSupp} w(t) \neq \overline{\pm \lsp(\Omega)}$
	
	\section{Background}
	Historically, the wave trace invariants have been exploited in the context of the inverse spectral problem, first posed by Mark Kac in \cite{Kac66}: ``Can one hear the shape of a drum?'' Mathematically, this asks if one can recover the geometry of a domain from knowledge of its Laplace spectrum. In general the answer is no, as was shown in \cite{Milnor}, \cite{GordonWebbWolpert}, \cite{Vigneras}, and \cite{Sunada1985RiemannianCA} amongst others. However, there are many cases in which one expects the answer to be yes, perhaps the most tantalizing of which is the class of strictly convex, smooth planar domains, which we consider in the present article. There have been numerous works in this direction, perhaps the most significant of which are \cite{Zel09} and \cite{HeZe19}. In the first article, Zelditch showed that generic families of analytic $\Z_2$ symmetric planar domains can be recovered from their spectrum by a careful analysis of the wave invariants associated to iterates of a bouncing ball orbit. We follow parts of this approach closely in our paper. In the second, Hezari and Zelditch prove that ellipses of small eccentricity are spectrally determined amongst all smooth, strictly convex bounded domains by their spectrum. This is the first and only known example of noncircular domains which are spectrally determined. $C^\infty$ compactness of Laplace isospectral sets was shown in \cite{POS0}, \cite{POS1} and \cite{POS2}. Other recent progress can be found in \cite{Zelditch1}, \cite{Zelditch2}, \cite{Zelditch3}, \cite{Zelditch5}, \cite{HeZe12}, \cite{Vig18}, and \cite{Vig23}.
	\\
	\\
	The dynamical analogue of the inverse spectral problem is to determine the shape of a manifold with boundary from knowledge of the length spectrum, consisting of lengths of closed geodesics (billiard trajectories) which make elastic reflections at the boundary. For planar domains, recent progress in this direction was acheived by the first author and Sorrentino in \cite{KaSo16}, together with the works \cite{KaAvDS16}, \cite{Koval} and \cite{KaDSWe17}. In the first three, it was shown that ellipses are isolated within the class of integrable domains. The latter two concern spectral rigidity of ellipses and nearly circular domains, respectively. $C^\infty$ compactness of marked length isospectral sets was demonstrated by the third author in \cite{Vig24}. Other significant results, including cases without boundary, are contained in \cite{KaHuSo18}, \cite{PopTop12}, \cite{Popov1994}, \cite{PopovTopalov}, \cite{GuKa}, \cite{CdVLSP} and \cite{GuillarmouLefeuvre}.
%	
%	For planar domains, recent progress in this direction was acheived by the first author and Sorrentino in \cite{KaSo16}, together with the works \cite{KaAvDS16} and \cite{KaDSWe17}. In the first, it was shown that ellipses are isolated within the class of integrable domains, which was later generalized by the second author in \cite{Koval}. The latter two concern spectral rigidity of ellipses and nearly circular domains.
	
	\section{Billiards}\label{Billiards}
	\noindent Before obtaining a singularity expansion for the wave trace, we first review the relevant background on billiards. This will be useful in our construction of the Balian-Bloch resolvent parametrix in Section \ref{layer potentials}. We denote by $\Omega$ a bounded and strictly convex region in $\R^2$ with smooth boundary. This means that the curvature of $\d \Omega$ is a strictly positive function. We can identify $B^* \partial \Omega$ with ${\R}\slash {\ell \Z} \times (0, \pi)$, where $\ell=|\partial \Omega|$ is the length of the boundary. Let $x: [0,\ell] \to \R^2$ be a unit speed parametrization of $\d \Omega$ and denote by $ds$ the arclength measure on $\d \Omega$. The billiard map is defined on the coball bundle of the boundary, $B^* \d \Omega = \{s, \sigma) \in T^*\d \Omega : |\zeta| < 1 \}$, which can be identified in two ways with the cosphere bundle $S_{\d \Omega}^* \R{^2}$ over the boundary via the two orthogonal projection maps from inward $(+)$/outward $(-)$ pointing covectors. For $\sigma \in B^*\d \Omega$, define $\phi_{\pm}$ to be the inward $(+)$/outward $(-)$ facing covectors in $S_{\d \Omega}^* \R^2$ which project to $\sigma$.
	\\
	\\
	Define the maps
	\begin{align*}
		\beta_\pm(s, \sigma) = (s', \sigma'),
	\end{align*}
	where $s'$ is the arclength coordinate of the subsequent intersection of the line through $x(s)$ with direction $\phi_{\pm}$ and $\sigma$ is the projection onto $B^* \d \Omega$ of the parallel transport of $\phi_{\pm}$ to $x(s')$. We call $\beta = \beta_+$ the billiard map. It is well known that $\beta$ preserves the canonical one form $\sigma ds$ induced on $B^*(\d \Omega)$ and is differentiable there, extending continuously up to the boundary, with a square root type singularity at $0$ and $\pi$ corresponding to a Whitney fold in its graph. As a consequence, $\beta$ also preserves the area form $d \sigma \wedge ds$. The maps $\beta_\pm^{n}$ are defined via iteration and it is clear that $\beta_\pm^{-n} = \beta_\mp^n$ for $n \in \Z$. It is also apparent that $\frac{\d \beta}{\d \theta} > 0$, which means that vertical fibers in $B^* \d \Omega$ are twisted upon iterating $\beta$. Together, these properties make $\beta$ into an exact symplectic twist map. It has a particularly simple generating function
	\begin{align*}
		h(s,s') = - |x(s) - x(s')|,
	\end{align*}
	by which mean that $\beta\left(x, -\frac{\d h}{\d s}\right) = \left( s,  \frac{\d h}{\d s'}\right)$, which implies that $s$ and $s'$ give local coordinates on the graph of $\beta$.
%	\red{Add condition somewhere that $p/q \neq 1/2$}.
%	Associated to the billiard map is the billiard flow, or broken bicharacteristic flow, which we denote by $\Phi^t$. 
	Geometrically, a billiard orbit corresponds to a union of line segments which are called links. A smooth closed curve $\mathcal C$ lying in $\Omega$ is called a {caustic} if any link drawn tangent to $\mathcal C$ remains tangent to $\mathcal C$ after an elastic reflection at the boundary of $\Omega$. By elastic reflection, we mean that the angle of incidence equals the angle of reflection at an impact point on the boundary. We can map $\mathcal C$ onto phase space $B^* \partial \Omega$ to obtain a smooth closed curve which is invariant under $\beta_\pm$. If the dynamics are integrable, these invariant curves are precisely the Lagrangian tori which folliate phase space. A point $P$ in $B^*\partial \Omega$ is called $q$-periodic ($q \geq 2$) if $\beta^q(P)=P$. We define the rotation number of a $q$-periodic orbit $\gamma$ emanating from $P \in \overline{B^*(\d \Omega)}$ by $\omega(\gamma) = \omega(P) = \frac{p}{q}$, where $p$ is the winding number of $\gamma$, which we now define.
%	We may consider the modified billiard map $\widetilde{\beta} = \Pi^* \beta$, where $\Pi$ is the natural mapping from $\R / \ell \Z \times [0,\pi]$ to the closure of the coball bundle $\overline{B^*\d \Omega}$. Pulling back by $\Pi$ clearly preserves the notion of periodicity.
	There exists a unique lift $\widehat{\beta}$ of the billiard map $\beta$ to the closure of the universal cover $\R \times [0,\pi]$ which is continuous, $\ell$ periodic in first variable and satisfies $\widehat{\beta}(\wt{s},0) = (\wt{s},0)$. Given this normalization, for any point $(\wt{s}, \wt{\sigma}) \in \R/\ell \Z \times [0,\pi]$ in the lift of a $q$ periodic orbit of $\beta$, we see that $\widehat{\beta}^q(\wt{s},\wt{\sigma}) = (\wt{s} + p \ell, \wt{\sigma})$ for some $p \in \Z$. We define this $p$ to be the winding number of the orbit generated by $\pi (\wt{s},\wt{\sigma}) = (s, \sigma) \in \overline{ B^* \d \Omega}$. Even if a point $(s,\sigma)$ generates an orbit which is not periodic in the full phase space but is such that $\pi_1 (\beta^q(s, \sigma)) = s$ for some $q \in \Z$, we can still define a winding number in this case. Such orbits are called $(p,q)$ geodesic loops, or loops for short. For a given periodic orbit, the winding number is independent of which point in the orbit is chosen, so we sometimes write $\omega(\gamma) = \omega(P')$ for any $P' \in \{P, \beta(P), \cdots, \beta^{q-1}(P)\}$. For deeper results and a more thorough introduction to the theory of dynamical billiards, we refer the reader to \cite{TabachnikovBilliards}, \cite{SiburgPrincipleofleastaction}, \cite{Katok}, \cite{Popov1994} and \cite{PopovTopalov}.

	\subsection{Parametrization and Notation}
	
	Following the notation in \cite{MaMe82}, we may translate and rotate $\Omega$ by a rigid motion so that there exists a point $p \in \d \Omega$ at the point $(0,0)$ with unit tangent vector $(1,0)$. Let $s$ denote the arclength coordinate along $\d \Omega$, measured counterclockwise from $p$. We then define the arclength parametrization
	\begin{align}\label{curvature coord}
		x(s) = (x_1(s), x_2(s)) = \int_0^s (\cos \phi(t), \sin \phi(t)) dt,
	\end{align}
	where $\phi$ is the angle made by the tangent to a base point, $dt$ is arclength on $\d \Omega$ and $\phi ' (t) = \kappa(t)$ is the curvature of $\d \Omega$. When dealing with successive points of reflection in the length functional, we use the capital letters $S = (s_1, \cdots, s_q)$  to denote the arclength coordinates of the reflection points and $x(S)$ to denote the corresponding boundary points $(x(s_1), \cdots, x(s_q))$ in $\R^2$, with $x_i(S)$ being the $i$th coordinate. Similarly, when dealing with a periodic orbit $\gamma$, we denote by $S_\gamma$ the arclength coordinates of the reflection points and by $\d \gamma$ the reflection points in $\R^2$.

%	\red{Need to add assumptions on nondegenerate orbit in what will be used below. A priori assume there is a nondegenerate orbit, and then in Section 5 on perturbation theory, show that the leading order terms correspond to size of perturbation away from degeneracy. Also add part about what it means to be nondegenerate in terms of length functional and loop function.}
	
	\subsection{Lengths of Orbits} 
	
%	\red{Nondegenerate lemma and $M(S)$ lemma. Can get rid of $V$.	Add to next section the Poisson relation, Fourier identity and relationship between order of wave trace singularity and decay of resolvent trace.}

	There are two natural functions for studying the length spectrum. If $\gamma$ is a $q$-periodic orbit of  the billiard map, the length of $\gamma$ is then defined to be
	\begin{align}
		L_\gamma = \sum_{i = 1}^q |x_{i+1} - x_i|,
	\end{align}
	where $x_i$ are the reflection points of $\gamma$ on the boundary, $x_{q+1} = x_1$ and $| \cdot |$ is the Euclidean distance function.
	
	\begin{def1}\label{LF}
		For an ordered collection of points $S = (s_1, \cdots, s_q) \in [0,\ell)^q$, we define the length functional $\LL(S)$ to be $\sum_{i = 1}^q |x_{i+1}(S) - x_{i}(S)|$, with $x_{q+1} = x_1$. For any $p, q \in \Z_{>0}$ relatively prime, we define the $(p,q)$ loop function $\ell_{p, q}(s)$ to be the length of the locally unique broken geodesic of rotation number $p/q$ based at $x(s)$, assuming a well defined branch of it exists (for example, when $p = 1$ and $q$ is sufficiently large; see Theorem 3.1 in \cite{Vig23}).
	\end{def1}

	In the same way that $h(s,s') = - |x(s) - x(s')|$ is a generating function for $\beta$, we have that $\LL(s_1, \cdots, s_q)$ is a generating function for $\beta^q$. The same holds $\ell_{p, q}$, at least locally in phase space.
	\begin{def1}
		The length spectrum of $\Omega$, denoted by $\lsp(\Omega)$, is the union of all lengths of periodic orbits.
	\end{def1}
	We will also need the notion of degeneracy for periodic orbits in the Section \ref{stationary phase and feynman diagrams}, where we apply the method of stationary phase with the length functional as the phase function of an oscillatory integral.
%	representation of the regularized resolvent trace.

	\begin{def1}\label{ND}
		We say that a periodic orbit $\gamma$ is nondegenerate if $\det(1 - P_\gamma) \neq 0$, or equivalently if $\d^2 \LL(S_\gamma) \neq 0$ (see \ref{prod} and \ref{prod2} below).
	\end{def1}

	\subsection{Hessian Invariants}
	Both generating functions $\LL$ and $\ell_{p, q}$ have second order invariants associated to them at a critical point corresponding to a periodic orbit $\gamma$. The first is the Maslov index, which is a universal affine function of $\text{sgn} \d^2 \LL \mod 8$ coming from the complex exponential of the signature the Hessian upon performing a stationary phase expansion in \ref{BB}. From a symplectic geometry perspective, they are intersection numbers of vertical fibers upon iteration by $\beta$, corresponding to the twist property mentioned above. It  is related to the Morse index for the length variational problem with periodic boundary conditions. References can be found in \cite{Bialy} and \cite{WunschYangZou}. From a microlocal point of view, these indices arise as unitary phases when expanding via stationary phase a Lagrangian distribution associated to the quantization of the billiard map. They come from the integer valued exponents in the unitary transition functions for the Arnold-Keller-Maslov line bundle, sections of which, when tensored with a half density, invariantly define the notion of a principal symbol for Fourier integral operators. Their relavence in the trace formula comes from the fact that the wave propagator is a Fourier integral operator microlocally near nonglancing, transversally reflected rays. The nonpositivity of these complex phases will be important for cancelling contributions to the wave trace of different orbits in our subsequent paper.
	\\
	\\
%	The intersection number of this line bundle with the vertical fibers over a closed orbit in phase space gives can then be described in terms of the usual Maslov index in symplectic geometry. The twisting of Lagrangian subspaces around an orbit in domains with boundary is described in \cite{GuMe79b}. 
	%		It isn't immediately clear why $\text{sgn} \d^2 L$ is a symplectic invariant even though $L$ is a well known generating function for $q$th iterate the billiard map.
	The other second order invariant is the determinant of $\d^2 \LL$. It is shown in \cite{KozTresh89} (Theorem 3, page 67) that
	\begin{align}\label{prod}
		\det \d^2 \LL = (-1)^{q+1} |\det(\Id - P_\gamma)| \prod_1^q b_i,
	\end{align}
	where $P_\gamma$ is the linearized Poincar\'e map of $\beta$ and
	\begin{align}\label{prod2}
		b_i = \frac{\d^2 |x_i(S) - x_{i+1}(S)|}{\d s_i \d s_{i+1}} = \frac{ \cos \theta_i \cos \theta_{i+1}}{|x_i(S) - x_{i+1}(S)|}.
	\end{align}
	%	For an orbit (curve) $\gamma$ of the billiard flow, we denote by $\d \gamma$ the reflection points of the corresponding billiard \textit{map} (on the boundary) and define $S_\gamma$ to be the vector of corresponding arclength coordinates in $[0,\ell)^q$.
	The $b_i$ above consist of cosines of angles of reflection for the billiard orbit $\gamma$ at which $\LL$ is evaluated and $|x_i(S) - x_{i+1}(S)|$ is the length of a link connecting the $i$th and $i + 1$st reflection points in $\gamma$. The angles are measured from the inward pointing normal vector at the boundary. The Poincar\'e map will appear naturally in the symplectic prefactor for our stationary phase expansion of the regularized resolvent trace and the product \ref{prod} enters via the multiple reflection expansion with the Hankel functions in Proposition \ref{Hankel} below.
	
	\begin{rema}
		The use of $\LL : \d \Omega^q \to \R$ as a phase function comes naturally from the layer potential formulas in Section \ref{Balian-Bloch Invariants Section}. In \cite{MaMe82}, \cite{Vig20} and \cite{HeZe19}, the $(p,q)$ loop function was used instead. Both are generating functions for the $q$-th iterate of the billiard map and each presents its own difficulties. One disadvantage of $\LL$ is that it has singularities on the diagonal which require regularization. It also has a large number of varaiables to keep track of together with Maslov indices. The loop function is simpler in that it depends only on the base point of an orbit, but is in general multivalued and only local branches exist. Here, we primarily use $\LL$ as it is globally well defined and works with potentially smaller $q$.
	\end{rema}

%	When the orbits are nearly glancing, i.e. $q$ large compared to $p$, there exists a unique branch as proved in \cite{Vig23}.
	
	\section{Balian-Bloch Invariants}\label{Balian-Bloch Invariants Section}
	Let $\Omega$ be a compact, strictly convex domain having smooth boundary and a nondegenerate periodic orbit $\gamma$ of period $q$ and length $L_\gamma$, which we assume to be isolated in the length spectrum. The connection between the length and Laplace spectrums come from the Poisson relation:
	\begin{align*}
		\text{SingSupp} \tr \cos t \sqrt{-\Delta} \subset \{0\} \cup \pm \overline{\text{LSP}(\Omega)}.
	\end{align*}
	The lefthand side is the singular support of the distribution \ref{wt}, which is a purely spectral quantity, while the righthand side is geometric. In the case of bounded planar domains, this result is due to Anderson and Melrose \cite{AndersonMelrose}. If $\rho \in \mathcal{S}(\R)$ is such that $\hat{\rho} = 1$ in a neighborhood of $L_\gamma$ and $\supp \hat{\rho} \cap \pm \overline{\text{LSP}(\Omega)} = \{L_\gamma\}$, then the regularized resolvent at frequency $k$ is given by
	\begin{align}\label{BB}
		\int_0^\infty e^{i k t} \widehat{\rho}(t) w(t) dt = \frac{1}{i} \Tr \rho \ast k G_{\Omega, D}(k, x, y),
	\end{align}
	where $G_{\Omega, D}$ is the Green's kernel, i.e. the Schwartz Kernel of the resolvent $R_{\Omega, D}(k) = (-\Delta_{\Omega, D} - k^2)^{-1}$ on $\Omega$ with Dirichlet boundary conditions. This identity comes from the formula
	\begin{align*}
		R_{\Omega, D}(k) = \frac{i}{k} \int_0^\infty e^{i k t} \cos t \sqrt{- \Delta_{\Omega, D}} dt.
	\end{align*}
	The Dirichlet resolvent $R_{\Omega, D}$ on $\Omega$ extends meromorphically to $\C$, with poles at the spectrum of the Laplacian and corresponding residues equal to the orthogonal projectors onto finite dimensional eigenspaces. In view of the Paley-Weiner theorem, the regularized resolvent \ref{BB} is in fact an entire function of $k$. Replacing $k$ by $k + i \tau$ or $k+ i \tau \log k$, the maps $\tau \mapsto e^{ - \tau t} \wh{\rho}(t) w(t)$ and $\tau \mapsto k^{-\tau} \wh{\rho}(t) w(t)$ give rise to continuous families of compactly supported distributions. Consequently, the regularized resolvent trace has a well defined and smooth limit as $\tau \to 0^+$. When $\tau = 0$, we have the smoothed density of states formula
	\begin{align}
		\int_0^\infty e^{i k t} \wh{\rho}(t) w(t)dt \sim \pi \sum_{\pm} \sum_{n = 1}^\infty \rho(k \pm \lambda_n),
	\end{align}	
	where $\lambda_n^2$ are the eigenvalues of $- \Delta$, which provides a connection between the high frequency behavior of $- \Delta$ and the wave trace. From positivity of the spectrum, it is clear that the term in the sum with $\rho(k + \lambda_n) = O(k^{-\infty})$ is asymptotically negligible. The order of a singularity in the wave trace at a particular length is related to the decay properties of the correspondingly regularized resolvent trace. It follows that the expression \ref{BB} has an asymptotic expansion in $k$ of the form
	\begin{align}\label{exp}
		F_{\gamma}(k) \sum_{j = 0}^\infty \widetilde{B}_{\gamma, j} k^{-j},
	\end{align}
	where
	\begin{align}\label{SPF}
		F_\gamma(k) = (-1)^{\epsilon_B(\gamma)} \frac{c_0 L_\gamma^\# e^{ikL_\gamma} e^{i \pi m_\gamma / 4}}{\sqrt{|\det(I - P_\gamma)}|}
	\end{align}
	is called the principal symplectic prefactor. Here, $c_0$ is a universal constant, $L_\gamma = L$ the length of an isolated nondegenerate periodic orbit $\gamma$, $L_\gamma^\#$ the primitive period of $\gamma$, $m_\gamma$ the Maslov index of $\gamma$, $P_\gamma$ the linearized Poincar\'e map and $\eps_B(\gamma)$ depends on boundary conditions (see \cite{Zel09}). The coefficients $\widetilde{B}_{\gamma, j}$ are called the principal Balian-Bloch invariants associated to $\gamma$, sometimes also called wave trace invariants. The resolvent trace expansion was originally investigated by physicists Balian and Bloch in \cite{BB1}, \cite{BB2}, \cite{BB3}. The invariants $\wt{B}_j$ are clearly equivalent to usual wave trace invariants $a_{\gamma, k}$ associated to $w(t)$ near the length spectrum in Theorem \ref{aj} below. They are independent of the choice of $\rho$ subject to the constraints above, since only the singularity of $w(t)$ at $L_\gamma$ affects asymptotics of \ref{exp} upon expanding via stationary phase expansion the integral \ref{BB}.
%	 and can be compared using basic identities from Fourier analysis.
	\\
	\\
	Our end goal is to show that one can find domains $\Omega$ and lengths of high multiplicity, such that several families of corresponding orbits have Maslov factors of opposite signs. We can then perturb $\d \Omega$ in such a way that the symplectic prefactors together with $\widetilde{B}_{\gamma, j}$'s cancel for arbitrarily large $j$. If the asymptotic expansion \ref{exp} is $O(k^{-\infty})$, then $w(t)$ is $C^\infty$ near $L$. If the decay is only $O(k^{-m})$ for some $m \geq 2$, then $w \in C^{m - 2}$ locally near $L_\gamma$. One difficulty arises from the nonreality of the coefficients $B_j$, whose complex phases may also depend on the corresponding orbit and could potentially negate whatever complex phase appears from the Maslov index. This is known not to be the case for the leading order invariant $B_0$, but not necessarily for $B_j$ with $j \geq 1$.
	\\
	\\
	We introduce the following notation for simplicity:
	\begin{def1}\label{modified}
		Define $\mathcal{D}_{B, \gamma}$ and $B_{\gamma, j}$ by the formula
		\begin{align*}
			F_{\gamma}(k) \sum_{j = 0}^\infty \widetilde{B}_{\gamma, j} k^{-j} = \mathcal{D}_{\gamma}(k) \sum_{j = 0}^\infty {B}_{\gamma, j} k^{-j},
		\end{align*}
		where
		\begin{align*}
			\mathcal{D}_{\gamma}(k) = \frac{c_0 e^{ikL_\gamma} e^{i \pi \text{sgn} \d^2 \LL/4}}{\sqrt{|\det \d^2 \LL|}}
		\end{align*}
		and the $B_{\gamma, j}$ are related to the $\widetilde{B}_{\gamma, j}$ as explained above.
	\end{def1}
The $B_{\gamma, j}$ are called modified Balian Bloch invariants and we will often refer to them simply as \textit{the} Balian-Bloch invariants when there is no risk of confusion. The reason for introducing $\mathcal{D}_{B, \gamma}$ and $B_{\gamma, j}$ is that they appear more naturally when expanding via stationary phase a microlocal parametrix for the resolvent in Section \ref{stationary phase and feynman diagrams}. The corresponding oscillatory integral for the regularized resolvent trace produces terms canonically equivalent to $|\det (\Id - P_\gamma)|$ and $m_\gamma$ in \ref{SPF}, but they appear more naturally in terms of the coordinates $x(S)$ introduced in Section \ref{Billiards}.

	\subsection{Relation to wave invariants}
We say that the length $L \in \R$ of a periodic orbit $\gamma$ is simple if up to time reversal ($t \mapsto -t$), $\gamma$ is the unique periodic orbit of length $L$. Without length spectral simplicity, there is no way to deduce Laplace spectral information from the length spectrum alone. It is shown in \cite{PeSt92} that generically, smooth convex domains have simple length spectrum and only nondegenerate periodic orbits. In that case, the following theorem holds:
\begin{theo}[\cite{GuMe79b}, \cite{PeSt17}] \label{aj}
	Assume $\gamma$ is a nondegenerate periodic billiard orbit in a bounded, strictly convex domain with smooth boundary and $\gamma$ has length $L$ which is simple. Then near $L$, the even wave trace has an asymptotic expansion of the form
	\begin{align*}
		\text{Tr} \cos t \sqrt{-\Delta} \sim \Re \left\{ a_{\gamma,0} (t - L + i0)^{-1} + \sum_{k = 0}^\infty a_{\gamma, k} (t - L + i0)^{k} \log(t - L + i0) \right\},
	\end{align*}
	where the coefficients $a_{\gamma k}$ are wave invariants associated to $\gamma$. The leading order term is given by
	\begin{align*}
		a_{\gamma, 0} =  \frac{e^{i \pi m_\gamma / 4}  L_\gamma^\#}{|\det (I - P_{\gamma})|^{1/2}}.
	\end{align*}
\end{theo}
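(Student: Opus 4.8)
\emph{Proof idea.} The plan is to route through the regularized resolvent trace \ref{BB}, whose asymptotics reduce to a finite-dimensional oscillatory integral, rather than working on the time side directly (equivalently one could build $\cos t\sqrt{-\Delta}$ as a Fourier integral operator microlocally near the transversally reflected ray $\gamma$ and run the Duistermaat--Guillemin clean intersection calculus, which is the original route of \cite{GuMe79b}). First I would insert the Balian--Bloch layer potential parametrix for $R_{\Omega,D}(k)$: expanding the Green's kernel as a Neumann-type series in the single and double layer operators on $\d\Omega$ (the multiple reflection expansion with Hankel functions, Proposition \ref{Hankel}), the regularized trace in \ref{BB} becomes a sum over closed billiard words. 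The support condition $\supp\widehat\rho\cap\pm\overline{\lsp(\Omega)}=\{L_\gamma\}$ together with the nonstationary phase lemma kills every word whose length is not critically close to $L_\gamma$, so what remains is an oscillatory integral over $(\d\Omega)^q$ with large parameter $k$, phase $\LL(S)$ (Definition \ref{LF}, regularized near its diagonal singularities as in Section \ref{stationary phase and feynman diagrams}), and an amplitude built from the principal symbols of the Hankel kernels, which to leading order is $\prod_{\ell=1}^{q}|x(s_\ell)-x(s_{\ell+1})|^{-1/2}$ times the cosines of reflection angles coming from the normal derivatives in the double layer operator.

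Second, I would apply stationary phase in the $q$ boundary variables. Nondegeneracy (Definition \ref{ND}) makes $\d^2\LL(S_\gamma)$ invertible, so there is a complete expansion $F_\gamma(k)\sum_{j\ge0}\widetilde{B}_{\gamma,j}k^{-j}$ as in \ref{exp} with $F_\gamma$ of the form \ref{SPF}: the Gaussian integral yields $|\det\d^2\LL(S_\gamma)|^{-1/2}$ and a phase $e^{i\pi\sgn\d^2\LL(S_\gamma)/4}$, and the various powers of $k$ (from the $q$-fold stationary phase, the Hankel asymptotics, and the regularizing convolution) assemble so that the leading coefficient is $k$-independent. Now I use the Kozlov--Treshchev identity \ref{prod}, $\det\d^2\LL=(-1)^{q+1}|\det(\Id-P_\gamma)|\prod_1^q b_i$ with $b_i$ as in \ref{prod2}: the product $\prod b_i=\prod_i\cos\theta_i\cos\theta_{i+1}/|x_i-x_{i+1}|$ is exactly what cancels the cosine and link-length factors carried by the layer potential amplitude, leaving the clean leading coefficient $|\det(\Id-P_\gamma)|^{-1/2}$. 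The Maslov index $m_\gamma$ is the sum of $\sgn\d^2\LL(S_\gamma)$ and the fixed $\pi/4$ phase shifts contributed by the $q$ Hankel functions in the word; one checks against \cite{Zel09} that this reproduces the standard Maslov (Morse) index of $\gamma$, while $\eps_B(\gamma)$ absorbs the sign changes coming from Dirichlet versus Neumann versus Robin normal derivatives.

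Third, I would transfer back to the time side. Using the elementary Fourier transforms of $(t-L\pm i0)^{-1}$ and $(t-L\pm i0)^m\log(t-L\pm i0)$ --- which for $k\to+\infty$ are respectively $O(1)$ and $O(k^{-m-1})$ times $e^{ikL}$, with the sign of $\pm i0$ selecting which contribution survives --- and tracking the complex conjugate arising from the real part in Theorem \ref{aj}, an expansion $e^{ikL_\gamma}\sum_{j\ge0}c_jk^{-j}$ of the regularized resolvent trace is precisely the Fourier content of a distribution whose singularity at $L$ has the stated form $\Re\{a_{\gamma,0}(t-L+i0)^{-1}+\sum_{m\ge0}a_{\gamma,m}(t-L+i0)^{m}\log(t-L+i0)\}$. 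Matching coefficients identifies $a_{\gamma,0}$ with $c_0$ and $a_{\gamma,m}$ with $c_{m+1}$ up to universal constants; reading off the $j=0$ term and accounting for the cyclic symmetry of $\LL$ and the iterate structure of $\gamma$, which replaces $L_\gamma$ by the primitive period $L_\gamma^\#$, yields $a_{\gamma,0}=e^{i\pi m_\gamma/4}L_\gamma^\#/|\det(\Id-P_\gamma)|^{1/2}$.

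The main obstacle, I expect, is the bookkeeping in the second step: regularizing the diagonal singularities of $\LL$ without disturbing the critical-point data of $\gamma$, and then matching the signature of the regularized Hessian together with the accumulated Hankel-function phases against the topologically defined Maslov index of $\gamma$ --- in particular pinning down the boundary-condition dependent sign $\eps_B(\gamma)$ and the universal constant $c_0$. Everything else is classical Fourier integral operator and stationary-phase machinery, as in \cite{GuMe79b} and \cite{PeSt17}, together with the algebraic identity \ref{prod}.
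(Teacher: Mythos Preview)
The paper does not prove Theorem \ref{aj} at all: it is quoted as a known result from \cite{GuMe79b} and \cite{PeSt17}, and the paper's only remark is that ``Putting the formula from \ref{aj} into \ref{BB}, we recover the asymptotics in the Balian-Bloch expansion. An algebraic formula for one set of invariants in terms of the other can be found using basic identities in Fourier analysis.'' So there is nothing in the paper to compare your argument against line by line.

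That said, your sketch is a reasonable and essentially correct outline, but it is not the route taken in the cited references. Guillemin--Melrose \cite{GuMe79b} construct the wave group directly as a Fourier integral operator microlocally near transversally reflected rays and apply the Duistermaat--Guillemin trace formula (clean intersection theory) to read off the singularity expansion at $t=L$; the layer potentials never appear. Your proposal instead runs the Balian--Bloch/Zelditch machinery of Sections \ref{layer potentials}--\ref{stationary phase and feynman diagrams} of this paper (multiple reflection expansion, stationary phase in $q$ boundary variables, the identity \ref{prod}) and then Fourier inverts to recover the $t$-side singularity. This is logically the reverse of how the paper uses Theorem \ref{aj}: the paper takes the wave-trace expansion as given and derives the resolvent-side expansion \ref{exp}--\ref{SPF} from it, whereas you derive \ref{exp}--\ref{SPF} from scratch and then invert. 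Both directions work, and indeed the equivalence is exactly what the paper asserts in the sentence quoted above. The advantage of your route is that it stays within the boundary-reduction framework and makes the Poincar\'e factor appear via \ref{prod} rather than via symplectic linear algebra on the canonical relation; the advantage of the original FIO approach is that it handles the Maslov index and the primitive period $L_\gamma^\#$ more conceptually and avoids the diagonal regularization issues you correctly flag as the main obstacle.
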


Putting the formula from \ref{aj} into \ref{BB}, we recover the asymptotics in the Balian-Bloch expansion. An algebraic formula for one set of invariants in terms of the other can be found using basic identities in Fourier analysis.

	\subsection{Layer Potentials}\label{layer potentials}
	Here we fix a spectral parameter $\lambda  = k + i \tau \in \mathbb{C}_+$ with $\tau > 0$ and follow \cite{MET2}, \cite{Zel09} in describing potential theory for the resolvent. As before, we let $ds$ denote arclength along the boundary $\d \Omega$. From potential theory in the plane (see Proposition \ref{Resolvent}), we have the following formula for the interior Dirichlet resolvent:
	\begin{align}\label{potential}
		R_{\Omega, D}(\lambda) = \1_{\Omega} \left(R_0(\lambda) - 2 \Dl(\lambda)(\Id + \mathcal{N}(\lambda))^{-1}r_{\d \Omega} \Sl^\dagger(\lambda)\right) \1_{\Omega},
	\end{align}
	where
	\begin{align*}
		\Sl(\lambda) f(x) = \int_{\d \Omega} G_0(\lambda,x,y) f(y) ds(y), \qquad x \in \R^2 \backslash \d\Omega\\
		\Dl (\lambda) f(x) = \int_{\d \Omega} \d_{\nu_s} G_0(\lambda,x,y) f(y) ds(y), \qquad x \in \R^2 \backslash \d\Omega
	\end{align*}
	are the single and double layer potentials at frequency $\lambda$ and the boundary operator $\n(\lambda)$ is given by
	\begin{align}\label{N}
		\mathcal{N}(\lambda) f(x) = 2 \int_{\d \Omega} \d_{\nu_{s'}}  G_0(\lambda,x,y) f(y) ds(y). \qquad x \in \d\Omega
	\end{align}
	Here, $G_0(\lambda, x, y)$ is the free Green's function, i.e. the Schwartz kernel of the free outgoing resolvent on $\R^2$ at frequency $\lambda$, evaluated at $x \in \R^2$ and $y = y(s) \in \d \Omega$. The $\pm$ notation indicates limits taken from within $\Omega$ (resp. $\Omega^c$). The following jump formula holds across $\d \Omega$:
	\begin{align*}
		\Dl(\lambda) f_\pm = \frac{1}{2} (\pm \Id + \mathcal{N}(\lambda)) f.
	\end{align*}
%	We will use the notation
%	\begin{align*}
%		\mcs = \int_{\d \Omega} G_0(\lambda,x,y) f(y) ds(y), \qquad x \in \d \Omega,
%	\end{align*}
%	to denote the continuous extension of the single layer potential $\Sl(\lambda)$ to the boundary. Occasionally, we will add $\Omega$ or $\Omega^c$ to the subscripts of the layer potentials in order to clarify which choice of normal is being used.
	\begin{rema}
		Note that the signs of the double and boundary layer potentials depend on the choice of unit normal to the boundary. We will use the \textit{outward pointing} unit normal moving forward, for both the interior and exterior domains.
	\end{rema}
	The free Green's function on $\R^2$ is given by
	\begin{align}\label{i4hankel}
		G_0(\lambda, x,y) = \frac{i}{4} H_0^{(1)} (\lambda|x- y|),
	\end{align}
	where $H_\nu^{(1)}$ is a Hankel function of the first kind (of order $\nu$) given by $J_\nu^{(1)} + i Y_\nu^{(1)}$; $J_\nu^{(1)}, Y_\nu^{(1)}$ are Bessel functions of the first and second kind respectively. The free Greens function solves the problem
	\begin{align*}
		(- \Delta_{\R^2} - \lambda^2) G_0(\lambda, x, y) = \delta_0(x - y).
	\end{align*}
	\begin{rema}
		In even dimensions, the free resolvent $R_0(\lambda)$ is only holomorphic on the Riemann surface of the logarithm rather than all of $\C$, so taking $\lambda = k + i \tau \in \C$ with $\Im \lambda> 0$ allows us to consider only the the outgoing resolvent corresponding to $H_0(\lambda|x-y|)$ as opposed to $H_0(-\lambda|x-y|)$, since it is bounded on $L^2(\R^2)$. After using layer potentials to obtain an explicit parametrix for the interior Dirichlet resolvent on $\Omega$, we can let $\tau \to 0$ using meromorphy of $R_{\Omega, D}$ and holomorphy of the regularized resolvent trace.
	\end{rema}
%	The positivity of the operator is the reason for $+i$; see \cite{EP95}. There is no need to consider incoming and outgoing resolvents since $\Omega$ is compact and the resolvent extends meromorphically to $\C$.
%	see \href{https://en.wikipedia.org/wiki/Helmholtz_equation}{Wikipedia} (actually the Wikipedia page was wrong, so I edited it), \href{https://projecteuclid.org/journals/communications-in-mathematical-physics/volume-170/issue-2/Spectral-duality-for-planar-billiards/cmp/1104273122.full}{Eckmann-Pillet}. There is no need to consider incoming/outgoing resolvents separately since the spectral parameter $\lambda = k + i \tau$ is taken in the upper half plane.
	
%	It is shown in \cite{Taylor1} that the Hankel function has the following integral representation:
%	\begin{align}\label{inft}
%		H_\nu^{(1)}(z) = \left(\frac{2}{\pi z}\right)^{1/2} \frac{e^{i (z - \pi \nu/2 - \pi /4)}}{\Gamma(\nu + 1/2)} \int_0^\infty e^{-s} s^{-1/2} \left(1 - \frac{s}{2iz}\right)^{\nu - 1/2} ds.
%	\end{align}
 	 
	\begin{prop}\label{Hankel}
		For $\Omega \subset \R^2$ with the Euclidean Laplacian, we have
		\begin{align}
			\begin{split}
				\n(\lambda,x(s) , x(s')) \sim \lambda^{1/2}  e^{i \lambda |x(s) - x(s')| + 3 \pi i/ 4} |x(s) - x(s')|^{-1/2} \cos \theta a_1 (\lambda |x(s) - x(s')|),
			\end{split}
		\end{align}
		where $\theta$ is the angle made by the link with the interior unit normal at $x(s')$ and $a_1$ is a semiclassical symbol having the asymptotic expansion
			\begin{align*}
				a_1(z) &\sim \sum_{m = 0}^\infty i^m c_m z^{-m}, \qquad c_0 = \frac{1}{\sqrt{2\pi}},\\
				c_m &= \frac{(4 - 1^2) (4 - 3^2) \cdots (4 - (2m - 1)^2 )}{m! 8^m \sqrt{2\pi}}.
			\end{align*}
		The symbol $a_1(\lambda|x(s) - x(s')|)$ is holomorphic in $\lambda$ on $\C \backslash (-\infty, 0]$. Tor any fixed $\varrho \in (0,1/2)$ and $\chi \in C_c^\infty(\R)$ supported on $[-1,1]$, $(1 - \chi(k^{\varrho} z )) a_1((k + i \tau) z)$ belongs to the symbol class $S_\varrho^0(\R)$:
		\begin{align*}
			\left|\d_z^\alpha \d_k^\beta \left((1 - \chi(k^{\varrho } z))a_0((k+i\tau)z)\right) \right| \leq C_{\alpha, \beta, \varrho, \tau, K} \langle k \rangle ^{ \varrho |\alpha|  - |\beta| }
		\end{align*}
		for all $K \subset \subset \R$ compact and $\alpha, \beta, \tau$.
	\end{prop}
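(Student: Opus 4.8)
The plan is to compute the Schwartz kernel of $\n(\lambda)$ explicitly, reduce it to a Hankel function of order one, and then quote the classical large‑argument asymptotics of $H_1^{(1)}$. First, by the definition \ref{N} of $\n$ and the formula \ref{i4hankel} for $G_0$, the kernel is $\n(\lambda,x(s),x(s')) = 2\,\d_{\nu_{s'}}G_0(\lambda,x(s),x(s')) = \tfrac{i}{2}\,\d_{\nu_{s'}}H_0^{(1)}(\lambda r)$ with $r = |x(s)-x(s')|$ and $\nu_{s'}$ the outward unit normal at $x(s')$. The chain rule, the identity $(H_0^{(1)})'=-H_1^{(1)}$, and the elementary computation $\d_{\nu_{s'}}r = \nu_{s'}\cdot(x(s')-x(s))/r = \cos\theta$ — where $\theta\in(0,\tfrac{\pi}{2})$ is the reflection angle at $x(s')$, the positivity $\cos\theta>0$ being exactly strict convexity of $\Omega$ (the point $x(s)$ lies strictly on the interior side of the tangent line at $x(s')$) — then give the exact identity $\n(\lambda,x(s),x(s')) = -\tfrac{i\lambda}{2}\cos\theta\,H_1^{(1)}(\lambda r)$.

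Next I would define $a_1$ on $\C\setminus(-\infty,0]$ by the exact formula $a_1(w) := -\tfrac{i}{2}e^{-3\pi i/4}\,w^{1/2}e^{-iw}H_1^{(1)}(w)$ (principal branch of $w^{1/2}$), so that the identity just obtained is literally the asserted factorization $\n = \lambda^{1/2}e^{i\lambda r + 3\pi i/4}r^{-1/2}\cos\theta\,a_1(\lambda r)$; holomorphy of $\lambda\mapsto a_1(\lambda r)$ on $\C\setminus(-\infty,0]$ (fixed $r>0$) is then immediate, since $H_1^{(1)}$ and $w^{1/2}e^{-iw}$ are holomorphic off the standard cut and the dilation $\lambda\mapsto\lambda r$ preserves $\C\setminus(-\infty,0]$. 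Substituting the classical expansion (Watson; DLMF 10.17)
\begin{align*}
H_1^{(1)}(w)\sim\Big(\tfrac{2}{\pi w}\Big)^{1/2}e^{i(w-3\pi/4)}\sum_{m=0}^\infty i^m\frac{a_m(1)}{w^m},\qquad a_m(1)=\frac{(4-1^2)(4-3^2)\cdots(4-(2m-1)^2)}{m!\,8^m},
\end{align*}
valid uniformly for $|\arg w|\le\pi-\delta$, and collapsing the scalar factor $-\tfrac{i}{2}e^{-3\pi i/4}\cdot\sqrt{2/\pi}\,e^{-3\pi i/4} = \tfrac12\sqrt{2/\pi}\,e^{-3\pi i/2} = 1/\sqrt{2\pi}$ (using $e^{-3\pi i/2}=i$) yields $a_1(w)\sim\sum_m i^m c_m w^{-m}$ with $c_0 = 1/\sqrt{2\pi}$ and $c_m = a_m(1)/\sqrt{2\pi}$, which is the claimed series.

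For the symbol estimates I would use that the remainder after truncating the series above is a symbol of order $-M$ in $w$ on $\{|w|\ge1,\ |\arg w|\le\pi-\delta\}$, so that $a_1$ is an order‑$0$ symbol there with $|\d_w^j a_1(w)|\lesssim|w|^{-1-j}$ for $j\ge1$. Assuming, as is standard, that $\chi\equiv1$ near the origin, $1-\chi(k^\varrho z)$ is supported in $\{|z|\gtrsim k^{-\varrho}\}$; for $\lambda=k+i\tau$ with $\tau\ge0$ fixed, $k\gg1$, and $z$ in the relevant range of positive link lengths (bounded above, and cut off below), $w=\lambda z$ then satisfies $|w|\gtrsim k^{1-\varrho}\ge1$ with $\arg w=\arg\lambda\in[0,\tfrac{\pi}{2}]$ uniformly away from the cut. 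Since each $\d_z$ on $a_1(\lambda z)$ brings down a factor $\lambda$ (with $|\lambda|\,|w|^{-1}\lesssim k^\varrho$) and each $\d_k$ brings down a factor $z$ (bounded on $K$, with $|z|\,|w|^{-1}\lesssim k^{-1}$), a routine chain‑rule induction gives $|\d_z^\alpha\d_k^\beta a_1(\lambda z)|\lesssim\langle k\rangle^{\varrho|\alpha|-|\beta|}$ on this region; combining with the trivial bounds $|\d_z^\alpha\d_k^\beta\chi(k^\varrho z)|\lesssim k^{\varrho\alpha-\beta}$ for $\alpha+\beta\ge1$ (using $|z|\sim k^{-\varrho}$ on the support of the cutoff derivatives) via the Leibniz rule yields $(1-\chi(k^\varrho z))a_1((k+i\tau)z)\in S^0_\varrho(\R)$. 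The identical argument handles the single‑layer symbol $a_0$ built from $H_0^{(1)}$.

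The computation is essentially classical, so I do not anticipate a serious obstruction; the points demanding care are the exact bookkeeping of branch cuts and the phase factors ($e^{3\pi i/4}$, the powers $i^m$, the coefficients $c_m$) in the second step, and in the third step checking that the cutoff $1-\chi(k^\varrho z)$ really does keep $w=\lambda z$ uniformly away from the branch point and the cut $(-\infty,0]$ — precisely the region where the uniform Hankel asymptotics, and hence the symbol bounds, would otherwise fail.
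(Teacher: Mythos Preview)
Your proof is correct and follows essentially the same route as the paper: compute the kernel as $-\tfrac{i\lambda}{2}\cos\theta\,H_1^{(1)}(\lambda r)$ via $(H_0^{(1)})'=-H_1^{(1)}$ and $\d_{\nu_{s'}}r=\cos\theta$, then read off the asymptotics and $c_m$ from the standard DLMF/Watson expansion of $H_1^{(1)}$, and finish with a chain-rule symbol estimate on the region $|z|\gtrsim k^{-\varrho}$. Your bookkeeping of the phase collapse and your treatment of the $S_\varrho^0$ bound are in fact slightly more careful than the paper's (which differentiates the asymptotic series term by term and has a minor $a_0/a_1$ typo), but the argument is the same.
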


	\begin{proof}
		Using the well known formula
		\begin{align*}
			\frac{d}{dz} H_\nu^{(1)}(z) = \frac{\nu H_\nu^{(1)}(z)}{z} - H_{\nu + 1}^{(1)} (z),
		\end{align*}
		and differentiating $G_0$ to obtain the boundary layer operator, we obtain
		\begin{align*}
			\n(\lambda, x(s), x(s')) = - \frac{i \lambda}{2} \cos \theta H_1^{(1)}(\lambda|x(s) -x(s')|).
		\end{align*}
		The $\cos \theta$ term comes from the calculation
		\begin{align*}
			\nabla_y |x - y| = \frac{y- x}{|x - y|},
		\end{align*}
		which is the unit vector in the direction of the link $\overline{xy}$. The asymptotics of $H_1^{(1)} (\lambda |x(s) - x(s')|)$, including the coefficients $c_m$ and the phase $-3 \pi /4$, are given in \href{https://dlmf.nist.gov/10.17}{NIST}:
		\begin{align*}
			\frac{1}{2} H_1^{(1)}(z) \sim \frac{e^{i z - 3\pi i/4}}{\sqrt{z}} \sum_{m} i^m c_m z^{-m}.
		\end{align*}
		Combining with $-i$ gives a total phase of $+ 3\pi i / 4$.
%		We pick up an additional phase shift from the factor of $i$ in $G_0$, the $\cos \theta$ factor from differentiating inward at the boundary, and an additonal minus sign when differentiating $H_0^{(1)}(z)$.
		That $a_1 \in S_0^0$ follows immediately from the asymptotic expansion in the statement of the proposition. Differentiating term by term gives
		\begin{align*}
			\left|\d_z^\alpha \d_k^\beta a_0(kz) \right|  \leq  \sum_{m = \max{\{\alpha, \beta\}}}^\infty c_m \frac{(m!)^2}{((m - \alpha)! (m - \beta)!)}k^{- m - \beta }z^{-m - \alpha}  \lesssim  \langle k \rangle^{-\beta} z^{- \alpha},
		\end{align*}
		as $k \to \infty$, $z$ away from $0$. Here, $\langle k \rangle = (1 + k^2)^{1/2}$ is the Japanese bracket. Differentiating $(1 - \chi(k^{\varrho} z))$ clearly shows that it is in $S_\varrho^0$, so $(1 - \chi(k^{\varrho} z)) a_1(kz) \in S_\varrho^0$.
	\end{proof}
	
	\begin{rema}
		The cutoff factor was introduced to localize asymptotics away from zero, where the asymptotic expansion isn't useful. The introduction of the parameter $\varrho$ is important for the decomposition of $\n$ in Proposition \ref{SP1} below, in which the resolvent is decomposed into a sum of microlocal (homogeneous) pseudodifferential operators and semiclassical Fourier integral operators at scale $\hbar = 1/k$. This decomposition is only valid when working with symbol classes which have $\varrho > 0$.
	\end{rema}
	In calculations below, we will use the function $a_1$ in formula \ref{Hankel} to avoid extra combinatorial constants while keeping track of complex phases.
	
	\subsection{Reduction to the boundary}
	We now derive the way in which layer potentials translate the interior problem to one defined only on the boundary, following the presentation in \cite{Zel0res}. To do so, we adapt the Poisson integral solution of the boundary value problem considered in \cite{MET2} to the Dirichlet/Neumann resolvents:
	\begin{prop}\label{Resolvent}
		The Dirichlet resolvent on $\Omega$ and the Neumann resolvent on the exterior domain $\Omega^c$ are given by
		\begin{align*}
			R_{\Omega,D}(\lambda) =& \1_{\Omega} \left(R_0(\lambda) + 2\Dl(\lambda) (\Id + \n_{\Omega}(\lambda))^{-1} r_{\d \Omega} R_0(\lambda)\right) \1_{\Omega},\\
			R_{\Omega^c, N}(\lambda) =& \1_{\Omega^c} \left(R_0(\lambda) - 2 \Sl(\lambda) (\Id - \n_{\Omega^c}^\# (\lambda)))^{-1} r_{\d \Omega^c} \d_{\nu_{\Omega^c}} R_0(\lambda)\right) \1_{\Omega^c},
		\end{align*} 
		where
		\begin{align*}
			\n_{\Omega}^\# g(x) = 2 \int_{\d \Omega} \d_{\nu_{x, \Omega}} G_0(\lambda, x ,y) g(y)ds(y), \qquad x \in \d \Omega\\
			\n_{\Omega^c}^\# g(x) = 2 \int_{\d \Omega^c} \d_{\nu_{x, \Omega^c}} G_0(\lambda, x ,y) g(y)ds(y). \qquad x \in \d \Omega^c
		\end{align*}
	\end{prop}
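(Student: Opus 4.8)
\medskip
\noindent\textbf{Proof proposal.}
The plan is to obtain both identities by the classical potential-theoretic device: exhibit the operator on the right as a solution operator for the corresponding boundary value problem, and then invoke uniqueness. The two cases are structurally identical, with the double layer potential, the interior Dirichlet problem and $\Id + \n_\Omega(\lambda)$ in the first being replaced by the single layer potential, the exterior Neumann problem and $\Id - \n_{\Omega^c}^\#(\lambda)$ in the second, so I would carry out the Dirichlet case in detail and indicate the modifications at the end. Throughout one works with $\Im\lambda > 0$: this is the regime in which $\1_\Omega R_0(\lambda)\1_\Omega$ and $R_{\Omega,D}(\lambda)$ are bounded on $L^2$ and in which the free Green's function \ref{i4hankel} decays, so that being ``outgoing'' for the layer potentials simply means being square integrable. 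The general spectral parameter is then recovered by meromorphic continuation.

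\medskip
First I would set up the ansatz. Given $f \in C_c^\infty(\Omega)$, a dense class in $L^2(\Omega)$, let $u_0 = R_0(\lambda) f$, so that $(-\Delta - \lambda^2) u_0 = f$ on $\R^2$ while $r_{\d\Omega} u_0$ need not vanish. Add a double layer correction $2\,\Dl(\lambda) g$, which $-\Delta - \lambda^2$ annihilates on $\R^2 \setminus \d\Omega$; then $u := \1_\Omega\big(u_0 + 2\,\Dl(\lambda) g\big)$ still solves $(-\Delta - \lambda^2) u = f$ in $\Omega$. Taking the interior trace and applying the jump relation for the double layer potential in the outward-normal convention fixed above, the Dirichlet condition $r_{\d\Omega} u = 0$ reduces to a boundary integral equation of the form $(\Id + \n_\Omega(\lambda)) g = r_{\d\Omega} R_0(\lambda) f$, with the precise sign supplied by that convention; solving it gives $g = (\Id + \n_\Omega(\lambda))^{-1} r_{\d\Omega} R_0(\lambda) f$ and hence the stated formula, and checking that the resulting operator is bounded on $L^2(\Omega)$ is routine from the mapping properties of $R_0(\lambda)$, the trace map, and $\Dl(\lambda)$. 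For the exterior Neumann problem one writes instead $\1_{\Omega^c}\big(R_0(\lambda) f - 2\,\Sl(\lambda) g\big)$, uses that the single layer potential is continuous across $\d\Omega$ with a jump only in its normal derivative, governed by $\n^\#$, and imposes $\d_\nu u|_{\d\Omega^c} = 0$ to arrive at $(\Id - \n_{\Omega^c}^\#(\lambda)) g = r_{\d\Omega^c}\d_{\nu_{\Omega^c}} R_0(\lambda) f$.

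\medskip
The substantive point is that $\Id + \n_\Omega(\lambda)$ and $\Id - \n_{\Omega^c}^\#(\lambda)$ are invertible on $L^2(\d\Omega)$ when $\Im\lambda > 0$. Since $\d\Omega$ is smooth, the Schwartz kernels of $\n_\Omega(\lambda)$ and $\n_{\Omega^c}^\#(\lambda)$ — normal derivatives of $G_0$ restricted to $\d\Omega \times \d\Omega$ — have only a bounded singularity on the diagonal, so these operators are compact on $L^2(\d\Omega)$ and $\Id + \n_\Omega(\lambda)$, $\Id - \n_{\Omega^c}^\#(\lambda)$ are Fredholm of index zero; it therefore suffices to prove injectivity. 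If $(\Id + \n_\Omega(\lambda)) g = 0$, then $w = 2\,\Dl(\lambda) g$ solves the homogeneous Helmholtz equation off $\d\Omega$ with vanishing interior trace; since $\Im\lambda > 0$, $\lambda^2$ is not a Dirichlet eigenvalue of $-\Delta$ on $\Omega$, so $w \equiv 0$ in $\Omega$; the normal derivative of $w$ is continuous across $\d\Omega$, so the exterior piece of $w$ has zero Cauchy data on $\d\Omega$ and, being square integrable, vanishes on $\Omega^c$ as well; finally the jump of $w$ across $\d\Omega$ equals $\pm g$, whence $g = 0$. The same scheme, with the single layer potential and uniqueness for the exterior Neumann problem, handles $\Id - \n_{\Omega^c}^\#(\lambda)$. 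With invertibility in hand, each right-hand side maps $f$ to the unique solution of the respective boundary value problem, i.e. to $R_{\Omega,D}(\lambda) f$ and $R_{\Omega^c,N}(\lambda) f$; since both sides are bounded and agree on a dense set they coincide for $\Im\lambda > 0$, and meromorphic Fredholm theory propagates the identities to the full domain of meromorphy of $R_{\Omega,D}$ and $R_{\Omega^c,N}$. I expect the uniqueness arguments underlying the injectivity of $\Id + \n_\Omega(\lambda)$ and $\Id - \n_{\Omega^c}^\#(\lambda)$ — equivalently, the absence of nontrivial homogeneous outgoing solutions when $\Im\lambda > 0$ — to be the one genuinely non-bookkeeping step, with the correct placement of signs under the outward-normal convention a close second.
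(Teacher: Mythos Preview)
Your proposal is correct and follows essentially the same route as the paper: write the sought resolvent as the free resolvent plus a layer-potential correction solving the homogeneous Helmholtz equation, and determine the density by imposing the boundary condition via the jump relations. The paper simply imports the Poisson-integral formulas $u = -2\,\Dl(\lambda)(\Id + \n(\lambda))^{-1}f$ and $v = 2\,\Sl(\lambda)(\Id - \n_{\Omega^c}^\#(\lambda))^{-1}\phi$ from \cite{MET2}, sets $\wt G = R_{\Omega,D}(\lambda) - R_0(\lambda)$ (respectively $R_{\Omega^c,N}(\lambda) - R_0(\lambda)$), observes that these solve the homogeneous problems with data $-r_{\d\Omega}R_0(\lambda)$ (respectively $-r_{\d\Omega}\d_\nu R_0(\lambda)$), and reads off the result. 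You instead rederive those Poisson formulas from the jump relations and supply the Fredholm/injectivity argument for the invertibility of $\Id + \n_\Omega(\lambda)$ and $\Id - \n_{\Omega^c}^\#(\lambda)$ that the paper is content to outsource to \cite{MET2}; this makes your argument more self-contained but is not a genuinely different strategy. Your caution about the precise sign under the outward-normal convention is well placed: the paper itself has an apparent sign discrepancy between \eqref{potential} and the statement of the Proposition.
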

	\begin{proof}
		We begin with solutions for the boundary value problems given in \cite{MET2}, keeping in mind our convention on exterior normals. For brevity, put $P(\lambda) = - \Delta - \lambda^2$:
		\begin{align*}
			&\begin{cases}
				P(\lambda) u(x) = 0,\\
				r_{\d \Omega} u = f,
			\end{cases}
		\implies
		u =  - 2 \Dl(\lambda) (\Id + \n(\lambda))^{-1} f, \qquad (\text{interior Dirichlet})\\
			&\begin{cases}
			P(\lambda) v(x) = 0,\\
			r_{\d \Omega^c} \d_\nu v = \phi,
		\end{cases}
		\implies
		v = 2 \Sl(\lambda) (\Id - \n_{\Omega^c}^\#(\lambda))^{-1} \phi. \qquad (\text{exterior Neumann})
		\end{align*}
		Notice that $\n_{\Omega}^\#$ is defined with the outward pointing normal for $\Omega^c$, and hence the inward pointing normal for $\Omega$. To obtain the resolvents, set $\wt{G} = R_{\Omega, D}(\lambda) - R_0(\lambda)$ and $\wt{\wt{G}}(\lambda) = R_{\Omega^c, N} (\lambda) - R_0(\lambda)$. These solve the interior Dirichlet and exterior Neumann problems with $f = - r_{\d \Omega} R_0(\lambda)$ and $\phi = - r_{\d \Omega} \d_\nu R_0(\lambda)$. Applying the layer potentials to each and adding back $R_0$ completes the proof.
	\end{proof}
	To combine the interior and exterior resolvents into a similar form, note that for the exterior problem, the outer normal points in the opposite direction and $\n_{\Omega} (\lambda) = - \n_{\Omega^c} (\lambda)$. The symmetry $G_0(\lambda, x, y) = G_0(\lambda, y, x)$ coming from the explicit form of the resolvent in \ref{i4hankel} implies that  $\n_{\Omega}^\# = \n_{\Omega}^\dagger$ and $\n_{\Omega^c}^\# = \n_{\Omega^c}^\dagger$. We also have that $(r_{\d \Omega^c} \d_\nu R_0(\lambda))^\dagger = \Dl(\lambda)$ and $ r_{\d \Omega} R_0(\lambda) = \Sl^\dagger(\lambda)$. The free resolvent $R_0(\lambda)$ is formally self adjoint, so taking the transpose of $R_{\Omega^c, N}(\lambda)$ and putting in cutoffs to the interior/exterior, we obtain
	\begin{align}\label{reduction}
	\begin{split}
		R_{\Omega,D}(\lambda) =& \1_{\Omega} \left(R_0(\lambda) + 2\Dl(\lambda) (\Id + \n(\lambda))^{-1} r_{\d \Omega} R_0(\lambda)\right) \1_{\Omega},
		\\
		R_{\Omega^c, N}^\dagger (\lambda) =& \1_{\Omega^c}  \left(R_0(\lambda) + 2\Dl(\lambda) (\Id + \n(\lambda))^{-1} r_{\d \Omega} R_0(\lambda) \right) \1_{\Omega^c},
	\end{split}
	\end{align}
	where $r_{\d \Omega}$ is the operator restricting to the boundary and now \textit{all} operators are defined using the outward pointing normal \textit{from} $\Omega$. Adding the two and subtracting the free resolvent yields
	\begin{align}\label{dsum}
		\tr \left( R_{\Omega, D}(\lambda)  + R_{\Omega^c, N}^\dagger(\lambda) - R_0(\lambda) \right) =  2 \tr \left( r_{\d \Omega} R_0(\lambda) (\1_{\Omega}^2 + \1_{\Omega^c}^2) \Dl(\lambda)(\Id + \n(\lambda))^{-1} \right),
	\end{align}
	where we commuted $r_{\d \Omega} R_0(\lambda)$ through the trace to obtain a kernel on $\d \Omega \times \d \Omega$. Note that the off diagonal terms are trace free. Observe also that the leftmost factor in the righthand side of \ref{dsum} has the property that
	\begin{align*}
		2 r_{\d \Omega} R_0(\lambda) \Dl (\lambda) &= 2 r_{\d \Omega}  R_0(\lambda) \circ \d_{\nu_2} R_0 (\lambda) r_{\d \Omega}\\
		&=  2 r_{\d \Omega}  \frac{1}{2 \lambda}\frac{d}{d \lambda} \d_{\nu_2}  R_0(\lambda) r_{\d \Omega} =  2 \frac{1}{2 \lambda} \frac{d}{d \lambda}\left( \frac{1}{2} \n(\lambda) \right),
	\end{align*}
	where we used the identity
	\begin{align*}
		\frac{d}{d \lambda} R_0(\lambda) = 2 \lambda R_0^2(\lambda),
	\end{align*}
	together with the fact that $\d_{\nu_2}$ is differentiating in only one of the variables. Hence, the righthand side of \ref{dsum} adds up to
	\begin{align}\label{logdet}
		\frac{1}{2 \lambda} \tr \frac{d }{d \lambda}\log (\Id + \n(\lambda)) = \frac{1}{2 \lambda} \frac{d}{d \lambda} \log \det (\Id + \n(\lambda)).
	\end{align}
	It is shown in \cite{ZworskiPoissonformulaforresonances}, \cite{SjostrandTraceformula} and \cite{ChristiansenResonances} that the regularized exterior resolvent trace $\Tr (R_{\Omega^c, N}(\lambda) - R_0(\lambda))$ admits an asymptotic expansion in negative powers of $\lambda$ for each periodic orbit in $\Omega^c$. By convexity of $\Omega$, there are only gliding orbits along $\d \Omega$ and no transversally reflected orbits in the exterior domain. In particular, for $\wh{\rho}$ supported away from $\Z | \d \Omega|$, we have
	\begin{align*}
		\int_0^\infty e^{itk} \wh{\rho}(t) \cos t \sqrt{- \Delta_{\Omega^c, N}} = O(k^{-\infty}).
	\end{align*}
	Convolving \ref{logdet} with the test function $- i \lambda \rho(\lambda)$ gives
	\begin{align}
		 \sum_\gamma \mathcal{D}_{\gamma}(k) \sum_{j = 0}^\infty {B}_{\gamma, j} k^{-j} \sim  \frac{1}{2 i} \int_{-\infty}^{\infty} \rho(k -  \mu) \frac{\d}{\d \mu} \log \det (\Id + \n(\mu + i \tau))^{-1} d \mu.
	\end{align}
	Verification that the relevant functional analytic properties are satisfied to make the above formal computations legitimate (e.g. trace class, Fredholm, etc.) are contained in \cite{Zel0res} and \cite{Zelditch3}.
%	Note that there is a missing factor of $1/2$ there. See bottom of page 223. Even if $N$ multiplied by $1/2$, then $(1+N)^{-1}$ needs to change also.

	\subsection{Regularization}
	As in the introduction, let $\wh{\rho} \in C_c^\infty(\R)$ be identically equal to $1$ in a neighborhood of an isolated length $L \in \lsp(\Omega)$, and zero outside of a small interval $(L- \eps, L+ \eps)$ such that $\supp \wh{\rho} \cap \overline{\lsp} = \{L\}$. Convolving the regularized trace above with $\rho$ and expanding $\n(\lambda)$ in a finite Neumann series yields the following formula connecting the resolvent trace asymptotics to billiard dynamics:
	\begin{prop}[\cite{Zel09}, Proposition 3.6] \label{mf}
	Let $\chi_{\d \gamma}(x, \Re (\lambda)^{-1} D_x)$ be a semiclassical pseudodifferential operator microlocalizing in phase space near the projection of an orbit $\gamma$ having length $L$ to $B^* \d \Omega$. Then, for each $J \in \N^+$, there exists $M_0(J)$ such that
	\begin{align}
		\begin{split}
			\Tr \int_{\R} \rho( k - \zeta)N^{M_0+1}(\zeta + i \tau)(\Id + \n(\zeta + i \tau))^{-1} \n'(\zeta + i \tau ) \chi_{\d \gamma}(\zeta) d\zeta = O(k^{-(J+1)}).
		\end{split}
	\end{align}
	In particular,
	\begin{align}
		\begin{split}
			\frac{1}{2i} \sum_{M = 0}^{M_0} (-1)^{M} \Tr &\int_{\R} \rho( k - \zeta) \n^M(\zeta + i \tau) \n'(\zeta + i \tau )\chi_{\d \gamma}(\zeta) d\zeta\\
			&= \mathcal{D}_{B, \gamma}(k + i \tau) \sum_{j = 0}^J B_{\gamma, j} k^{-j} + O(k^{-(J+1)}).
		\end{split}
	\end{align}
\end{prop}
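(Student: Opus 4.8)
The plan is to combine the logarithmic determinant formula obtained above with the multiple reflection (finite Neumann) expansion, using $\chi_{\d\gamma}$ to isolate the orbit $\gamma$, and then to estimate the truncation error by a stationary phase argument. Write $\lambda = \zeta + i\tau$ with $\tau > 0$; the limit $\tau \to 0^+$ is harmless because the $\rho$-regularized trace is entire and polynomially bounded in $k$, as recorded in \cite{Zel0res}, \cite{Zelditch3}. Since $\n(\lambda)$ is trace class and $\Id + \n(\lambda)$ is invertible for $\Im\lambda > 0$, one has $\frac{d}{d\zeta}\log\det(\Id+\n)^{-1} = -\Tr\big[(\Id+\n)^{-1}\n'\big]$, and, following the reduction to \ref{logdet} and its convolution with $-i\lambda\rho(\lambda)$, the $\gamma$-part of the regularized resolvent trace is identified, up to a nonzero constant, with $\Tr\int_\R \rho(k-\zeta)(\Id+\n)^{-1}\n'\,\chi_{\d\gamma}\,d\zeta$. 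Because $\chi_{\d\gamma} \equiv 1$ near the lift of $\gamma$ to $B^*\d\Omega$ while $\supp\chi_{\d\gamma}$ meets no other orbit projection of length in $\supp\wh\rho$, Definition \ref{modified} together with \ref{exp}, \ref{SPF} shows that this quantity has the asymptotics $\mathcal{D}_{B,\gamma}(k+i\tau)\sum_{j \geq 0} B_{\gamma,j} k^{-j}$.

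Next, insert the exact identity $(\Id+\n)^{-1} = \sum_{M=0}^{M_0}(-1)^M \n^M + (-1)^{M_0+1}\n^{M_0+1}(\Id+\n)^{-1}$ to split $(\Id+\n)^{-1}\n'\chi_{\d\gamma}$ into the finite sum appearing in the proposition plus a remainder. Taking traces and convolving with $\rho(k-\zeta)$, the ``in particular'' statement will follow once we prove the first displayed estimate, $\Tr\int_\R\rho(k-\zeta)\n^{M_0+1}(\Id+\n)^{-1}\n'\chi_{\d\gamma}\,d\zeta = O(k^{-(J+1)})$ for a suitable $M_0 = M_0(J)$, since the tail $\sum_{j > J}B_{\gamma,j}k^{-j}$ of the Balian--Bloch expansion is itself $O(k^{-(J+1)})$.

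For the remainder estimate, recall from Proposition \ref{Hankel} that away from the diagonal $\n(\lambda)$ is a semiclassical Fourier integral operator with phase $\lambda|x(s)-x(s')|$, quantizing the billiard map $\beta$ at scale $\hbar = 1/\Re\lambda$, while the cutoff $\chi(k^\varrho z)$ splits off a near-diagonal piece; write $\n = \n_o + \n_d$ accordingly. Setting $T = (\Id+\n)^{-1}\n'$, whose $\rho$-smoothed operator norm is polynomially bounded in $k$ by \cite{Zel0res}, \cite{Zelditch3}, the quantity $\Tr[\rho \ast (\n^{M_0+1}T\,\chi_{\d\gamma})]$ is a sum over the $2^{M_0+1}$ choices of each factor of $\n^{M_0+1}$ as $\n_o$ or $\n_d$, each term being an oscillatory integral in $k$ whose phase is a length functional on closed billiard-type configurations near $\gamma$ with total length constrained to near $L = L_\gamma$. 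If $r$ of the factors of $\n^{M_0+1}$ are chosen to be $\n_o$, the corresponding term can be non-negligible only if the configuration has a critical point in $\supp\chi_{\d\gamma}$ of length near $L$, i.e.\ a periodic orbit near $\gamma$; by the period $q$ of $\gamma$, the isolation of $L$ in $\lsp(\Omega)$ and the choice $\supp\wh\rho \cap \lsp(\Omega) = \{L\}$, this forces the orbit to be $\gamma$ itself (iterates have length $mL \notin \supp\wh\rho$ for $m \geq 2$, and there is no nearby closed configuration whose number of transversal reflections is not a multiple of $q$). Hence $r \leq q$, so at least $M_0 + 1 - q$ of the factors of $\n^{M_0+1}$ are of type $\n_d$; by the $\varrho$-dependent symbol estimates of Proposition \ref{Hankel}, each such factor forces an extra reflection point to collapse onto a reflection point of $\gamma$ and, after its position is integrated out, contributes a gain of a fixed positive power $k^{-c}$, $c = c(\varrho) > 0$, uniformly in the word. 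Summing the finitely many contributions, the remainder is $O\big(k^{\,N_0 - c(M_0+1-q)}\big)$ for some fixed $N_0$, which is $O(k^{-(J+1)})$ as soon as $M_0 = M_0(J)$ is chosen large; this defines $M_0(J)$. The surviving terms have $r = q$, hence correspond to $M \geq q-1$ in the finite sum, and expanding them by stationary phase as in Section \ref{stationary phase and feynman diagrams} reproduces $\mathcal{D}_{B,\gamma}$ and the invariants $B_{\gamma,j}$.

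The main obstacle is the last quantitative point: verifying that each additional near-diagonal factor genuinely gains a fixed positive power of $k$ and that the amplitudes of the resulting oscillatory integrals grow slowly enough with word length that a \emph{finite} $M_0$ suffices for each $J$. This is exactly where the decomposition of $\n$ into a homogeneous pseudodifferential part and a semiclassical Fourier integral part flagged after Proposition \ref{Hankel}, the condition $\varrho > 0$, and the uniform-in-$M$ symbol bounds are essential, and where the detailed stationary phase and graph combinatorics of Section \ref{stationary phase and feynman diagrams} enter. By comparison, the isolation of the contribution of $\gamma$ via $\chi_{\d\gamma}$ and the non-stationarity of the phase in all other cases are routine consequences of the Poisson relation and the isolation of $L$ in the length spectrum.
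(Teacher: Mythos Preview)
The paper does not supply its own proof of this proposition: it is quoted from \cite{Zel09} (Proposition~3.6 there), with the functional-analytic justifications deferred to \cite{Zel0res} and \cite{Zelditch3}. The paper does, however, give the heuristic immediately after the statement and in the paragraph following the $\n = \n_0 + \n_1$ decomposition: microlocalized near $\gamma$ and with $\wh\rho$ supported near $L_\gamma$, any word $\n_\sigma$ can carry at most $q$ factors of the FIO piece $\n_1$ before the phase becomes nonstationary, so for $M$ large at least $M - q$ factors must be the pseudodifferential piece $\n_0 \in \Psi^{-1}(\d\Omega)$, each of which lowers the order by exactly $1$.

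Your sketch follows this mechanism faithfully and is the correct outline. Two small points of comparison. First, your gain per near-diagonal factor is stated as $k^{-c}$ with $c = c(\varrho) > 0$; in the paper's formulation the gain is exactly $k^{-1}$ because $\n_0$ is a genuine order $-1$ pseudodifferential operator (the $\varrho$ enters only to make the decomposition well defined, not to set the gain). Second, the delicate step you flag at the end---that the amplitude bounds are uniform enough in word length for a finite $M_0$ to suffice, and that the residual factor $(\Id + \n)^{-1}\n'$ is controlled---is precisely what the paper outsources to \cite{Zelditch3} and \cite{Zel0res} rather than proving in place. So your proposal is consistent with the paper's treatment, and there is nothing further to compare against here.
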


	This reduces to the computation of $B_{\gamma, j}$ to a calculation of boundary integrals corresponding to powers of $\n$. The boundary operator $\n$ does not have small operator norm and hence the infinite series isn't actually convergent. However, the remainder can still be made arbitrarily small for sufficiently large $M$, which is shown in \cite{Zelditch3}. This explains that while not an asymptotic expansion in the usual case, the Neuman series still provides an effective algorithm for computing the coefficients $B_j$. In \cite{BB1}, \cite{BB2}, \cite{BB3}, the physicists Balian and Bloch refer to the sum above as the multiple reflection expansion. The kernel $\n^M(\lambda,s,s')$ is of the form
	\begin{align*}
		\n^M(\lambda,x(s),x(s')) &= \int_{\d \Omega^{M-1}} \n(\lambda, s, s_1) \n(\lambda, x(s_1), x(s_2)) \cdots \n(\lambda, x(s_{M-1}), x(s')) ds_1 \cdots ds_{M-1}\\
		& = \int_{\d \Omega^{M-1}} e^{i k \LL (x(s), x(s_1), \cdots, x(s_{M-1}) , s')} a(\lambda, s,s_1, \cdots, s_{M-1}, s')ds_1\cdots ds_{M-1},
	\end{align*}
	for the semiclassical amplitude
	\begin{align}
		a(\lambda, s, s_1, \cdots, s_{M-1}, s') = \lambda^{M/2} e^{3 \pi i M/4} \prod_{i = 1}^{M} \frac{ \cos \theta_i}{| x(s_{i-1}) -  x(s_i)|^{\frac{1}{2}}} a_1(\lambda| x(s_{i-1}) -  x(s_i)|).
	\end{align}
	The term \textit{multiple reflection} refers to the phase function, which is a sum of consecutive generating functions for $\beta$ and hence generates the $M$-fold iterate of the billiard map. Here, $a_1$ is the semiclassical symbol in Proposition \ref{Hankel} and we use the convention that $s = s_0, s' = s_M$. Integrating by parts the formula in Proposition \ref{mf} and reindexing, one obtains
	\begin{align*}
		\begin{split}
			\frac{1}{2i} \sum_{M = 1}^{M_0 + 1} \frac{ (-1)^{M}}{M}  \Tr &\int_{\R} \rho'( k - \zeta)\n^{M}(\zeta + i \tau) d\zeta\\
			&= \mathcal{D}_{B, \gamma}(k + i \tau) \sum_{j = 0}^J B_{\gamma, j} k^{-j} + O(k^{-(J+1)}). 
		\end{split}
	\end{align*}
	With the understanding that the following is not a true asymptotic expansion, but rather an algorithm for computing the wave invariants $B_{\gamma, j}$ as above, we write
		\begin{align}\label{mf2}
		\begin{split}
			\mathcal{D}_{B, \gamma}(k + i \tau) \sum_{j = 0}^\infty B_{\gamma, j} k^{-j} \sim^* \frac{1}{2i} \sum_{M = 1}^{\infty} \frac{ (-1)^{M}}{M}  \Tr \int_{\R} \rho'( k - \zeta) \n^{M}(\zeta + i \tau) d\zeta.
		\end{split}
	\end{align}
	The notation $\sim^*$ is meant to emphasize the fact that the expansion is in the sense of Proposition \ref{mf}. The operator $\n$ is both a \textit{microlocal} pseudodifferential operater and a \textit{semiclassical} Fourier integral operator. It has microlocally homogeneous singularities near the diagonal and away from the diagonal, it is a semiclassical Fourier integral operator quantizing the billiard map with semiclassical parameter $\hbar = 1/k$. Hence, some care must be taken to apply the $\Psi$DO and FIO calculi. In \cite{HaZe}, Hassell and Zelditch introduced a microlocal decomposition $\n = \n_0 + \n_1$,
	\begin{align*}
		\n_0(x(s), x(s')) &= \chi(k^{\rho} |x(s) - x(s')) \n(\lambda, x(s),x(s')),\\
		\n_1 (x(s), x(s')) &= (1- \chi(k^{\rho} |x(s) - x(s')|)) \n(\lambda, x(s), x(s')),
	\end{align*}
	where $\n_0 \in \Psi^{-1}(\d \Omega)$ and $\n_1 \in I_{1/k}^0(\d \Omega \times \d \Omega; \Lambda_\beta)$ is a Lagrangian distribution of order zero which is the Schwartz kernel of a semiclassical Fourier integral operator quantizing the billiard map. Here, $\Lambda_\beta$ is the twisted graph of the billiard map:
	\begin{align*}
		\Lambda_\beta = \{(s, s', \sigma, - \sigma'): (s, \sigma), (s', \sigma') \in B^*(\d \Omega), \beta(s, \sigma) = (s', \sigma')\},
	\end{align*}
	thought of as a Lagrangian submanifold of the product cotangent bundle. In this case, one has
	\begin{align*}
		\n^M = (\n_0 + \n_1)^M = \sum_{\sigma: \Z_M \to \{0,1\}} \n_{\sigma(0)} \n_{\sigma{(1)}} \cdots \n_{\sigma(M)}.
	\end{align*}
	We denote these compositions by $\n_\sigma$ and write $|\sigma| = \#\{i: \sigma(i) = 0\}$. The composition rules for Fourier integral operators tell us that the product $\n_0 \n_1$ has canonical relation contained in $\Delta_{\d \Omega} \circ \Lambda_\beta = \Lambda_\beta$, and the fact that $\n_0 \in \Psi^{-1}$ reduces the order of the symbol by $1$. Hence, $\n_\sigma$ has order $-|\sigma|$ and quantizes the $M - |\sigma|^{th}$ iterate of the billiard map. As we have microlocalized near an orbit $\gamma$ with primitive period $q$, only the terms with $M - |\sigma| = rq$ for $r \in \Z$ potentially contribute to the leading order asymptotics near the lengths $rL$ of a $p/q$ periodic orbit iterated $r$ times. This is shown rigorously in \cite{Zel09}, where it is also demonstrated that for any specified order $R$ and sufficiently large corresponding $M$, the terms $\n_0^M$ do not contribute to the regularized resolvent trace asymptotics modulo $O(k^{-R})$. In addition, it is shown there that if $L = L_\gamma$ is simple, then the trace is unchanged modulo $O(k^{-\infty})$ when the semiclassical cutoff in \ref{mf} is removed. The presence of a microlocal cutoff is what is meant in Theorem \ref{main} regarding the contributions to the resolvent trace of a specific orbit $\gamma$. The reason for a small remainder in Proposition \ref{mf} is that after microlocalizing near the projection of $\gamma$ to $B^*(\d \Omega)$ and convolving with $\rho$, $\wh{\rho}$ being supported near $L_\gamma$, $\n_\sigma$ can have at most $q$ factors of $\n_1$ without making the phase nonstationary. For large enough $M$, at least $M -q \gtrsim R$ factors of $\n_0$ must enter the composition, each of which reduces the order by $1$.
%	is precomposed with a semiclassical cutoff $\chi_{\d \gamma}(s, k^{-1} D_s)$ microlocalizing in $B^*\d \Omega$ to a neighborhood of reflection points $\d \gamma$ of a periodic orbit $\gamma$.
	\\
	\\
%	If $M - |\sigma| \neq rq$, then since we have microlocalized near $\gamma$ and there are no critical points corresponding to orbits with length in the support of $\wh{\rho}$,  $\tr \rho' \ast N_\sigma = O(k^{-\infty})$. When $|\sigma| = n$ and $M = qr + n$ for some $m \neq 0$, $\Tr \rho' \ast N_\sigma$ has an asymptotic expansion in inverse powers of $k$ starting with $k^{-n}$.
%	
%	
	The only possible terms contributing to $B_{\gamma, j}$ have $q \leq M \leq q + j$; if $M$ were larger, the terms would be at most of order $k^{-j -1}$ and if $M$ were smaller than $q$, the phase would be nonstationary on the support of $\widehat{\rho}$, in which case the asymptotics would be $O(k^{-\infty})$. Moreover, we will see later that only the compositions with $|\sigma| = 0$ generate useful terms containing
	\begin{enumerate}
		\item highest powers of a yet to be specified deformation parameter $\epsilon^{-1}$ and
		\item highest order derivatives of the boundary curvature upon performing a stationary phase expansion.
	\end{enumerate}
%	See Section \ref{deformation} for a precise description of the multiscale deformation. If
%	\begin{align}\label{M bounds}
%		\Tr \rho' \ast N_\sigma \sim \sum_{j = 0}^\infty C_{\sigma, j} k^{-j},
%	\end{align}
%	then for $\hat{\rho}$ localized near $rL$ corresponding to the $r^{th}$ iterate of a $p/q$ orbit, $C_{\sigma, j} \equiv 0 \mod \mathcal{J}^{2j }(\d \Omega)$ (the curvature jet consisting of $2j$ derivatives of the boundary defining function for $\d \Omega$) when $|\sigma| \geq 1$, even for $qr \leq M \leq qr + j$. The highest derivative data is contained in the $2j+1, 2j + 2$ derivatives (cf. \cite{Zel09}).

	%From this, it follows that for each order $j$ appearing in the Balian-Bloch expansion, we have
	%\begin{align}
	%	\mathcal{B}_{\gamma, j} k^{-j} = \sum_{M = qr}^{qr + j } \sum\limits_{\substack{ \sigma: \Z_{M} \to \{\pm\} \\ |\sigma| = }} \frac{(-1)^{qr + j - 1}}{qr + j} \int_{\R} \rho'( k - \zeta) \Tr N_\sigma(\zeta + i \tau) d \zeta
	%\end{align}
%	\begin{def1}
%		For each $M, \rho, \sigma$, we define the integrals
%		\begin{align*}
%	{	\II (k + i \tau) = 	\int_{-\infty}^{\infty} \Tr \rho'(k - \zeta) N_\sigma(\zeta + i \tau) d \zeta.}
%		\end{align*}
%	\end{def1}
%Our sum \ref{mf2} can then be written
%\begin{align*}
%	\mathcal{D}_{B, \gamma}(k + i \tau) \sum_{j = 0}^\infty B_{\gamma, j} k^{-j} = \frac{1}{2} \sum_{M = 1}^\infty \sum_{\sigma: \Z_M \to \{0,1\}} \frac{(-1)^{M+1}}{M} I_{M, \rho}^\sigma.
%\end{align*}

\subsection{Principal symbol calculation}
	To compute \ref{mf2}, we use the Fourier inversion formula to see that
	\begin{align}\label{IFT}
		\begin{split}
			&\tr  \int \rho' (k - \zeta) \n_\sigma (\zeta + i \tau) d\zeta = \frac{1 }{2\pi} \int_{{\d \Omega}_S^{M }} \int_{-\infty}^{\infty}    \int_{-\infty}^{\infty} e^{i(k - \zeta)t} i t  \hat{\rho}( t) \\
			& \times \prod_{i = 1}^{M}  e^{i(\zeta + i \tau)|x(s_i) - x(s_{i+1})| +3 \pi i / 4} \chi_i(\zeta^{\varrho} |x(s_i) - x(s_{i+1})|)\\ 
			& \times \sqrt{\frac{(\zeta + i \tau)}{|x(s_i) - x(s_{i+1})|}}   a_1((\zeta + i \tau) |x(s_i) - x(s_{i+1})|) \cos \theta_i dt d \zeta d S,
		\end{split}
	\end{align}
%	\begin{align}
%		\begin{split}
%			\int \rho' (k - \zeta) \Tr N_\sigma(\zeta + i \tau) d\zeta &\\
%			= \frac{k }{(2\pi)}  \prod_{i = 1}^{M - |\sigma|}  \int_{-\infty}^{\infty} \int_{-\infty}^{\infty} & \int_{{\d \Omega}_S^{M - |\sigma|}}  t k \hat{\rho}(k t) e^{i(\zeta + i \tau)|x(s_i) - x(s_{i+1})| - 3\pi i / 4}\\ 
%			\left((\zeta + i \tau) |x(s_i) - x(s_{i+1})| \right)^{-1/2}&  a_1((\zeta + i \tau) |x(s_i) - x(s_{i+1})|) e^{-i t \zeta k} e^{itk^2} d \zeta dt dS.
%		\end{split}
%	\end{align}
	where $S = (s_1, \cdots, s_{M})$, $s_{M +1} = s_1$ and
	\begin{align*}
		\chi_i(z) = \begin{cases}
			\chi(z) & \sigma(i) = 0\\
			1- \chi(z) & \sigma(i) = 1,
		\end{cases}
	\end{align*}
	for a cutoff $\chi$ equal to $1$ on $[-1/2,+1/2]$ and zero outside of $[-1,+1]$ (cf. Proposition \ref{Hankel}). Changing variables $\zeta \mapsto \zeta/k$, formula \ref{IFT} can be written as the oscillatory integral
	\begin{align}\label{Nsig}
		\begin{split}
			k \int_{-\infty}^{\infty} \int_{-\infty}^{\infty}  \int_{{\d \Omega}_S^{M} } e^{i k \Phi({\zeta} + i {\tau}, S, {t})}  B_\sigma( k{\zeta} + i {\tau}, t, S)   d {\zeta} d{t} dS,
		\end{split}
	\end{align}
	where the phase is given by
	\begin{align}
		\Phi({\zeta} + i {\tau}, S, {t}) =  (1- \zeta)t +  {\zeta} \LL(S) 
	\end{align}
	and the amplitude by
	\begin{align}\label{phase}
		\begin{split}
				B_{\sigma} ( \lambda, S) = &
			 \frac{ e^{3 M \pi i / 4 } }{2 \pi } i t  \hat{\rho}({t}) \lambda^{\frac{M}{2}}e^{- \tau \LL(S)} \prod_{i = 1}^{M} \wt{\chi}_i(\lambda, S)\\
			&\times \frac{\cos \theta_i}{|x(s_i) - x(s_{i+1})|^{1/2}}  a_1(\lambda |x(s_i) - x(s_{i+1})|),
		\end{split}
	\end{align}
	where we use the notation
	\begin{align*}
		\wt{\chi}_i(\lambda, S) = \begin{cases}
			\chi( \Re(\lambda)^{\varrho} |x(s_i) - x(s_{i+1}) |) & \sigma(i) = 0\\
			1- \chi(\Re(\lambda)^{\varrho} |x(s_i) - x(s_{i+1})|) & \sigma(i) = 1.
		\end{cases}
	\end{align*}
	Recall that $\theta_i$ is the angle between the interior normal to the boundary and the link $(x(s_{i+1}) - x(s_i))$.	When we later set $|\sigma| = 0$ and $M = q$ to obtain leading order asymptotics, all of the $\chi_i$ will be one.
	
%	\red{After integration by parts, $N^M$ becomes $N^{M+1}$ so we need an extra $k^{1/2}$ which explains order in Zelditch. The order of the amplitude should then be $0$ not $-|\sigma|$. Also, symbol of $N_0$ is actually of ordder $-2$ in dimension $2$, which explains orders. Write using Fourier transform either with MET integral formula or Fourier inversion to see order. Zelditch has an error with the $3$ -- comes from $M+1$ rather than $M$ here, as in lemma 9.2 of resonance paper. Remainder is gotten by deforming the contour $\zeta + i \tau$ to $\zeta + i \tau \log \zeta$, which generates small remainder if away from glancing and $M$ sufficiently large... microlocal cutoff guarantees away from glancing.}
	
	\begin{rema}
		Note as above that a priori, $B_\sigma \in S_\varrho^{ \frac{M}{2}}(\d \Omega^{M})$. The condition $\varrho \in (0, \frac{1}{2})$ guarantees that one can integrate out the diagonal singularities corresponding to terms with $\sigma(i) = 0$ to obtain a Fourier integral kernel of lower order quantizing $\beta^{M - |\sigma|}$, as was done in \cite{Zel09}. We will make use of this in Section \ref{stationary phase and feynman diagrams}.
	\end{rema}
%	Here, $(-M + 2) \pi/ 4$ is a \textbf{phase shift} -- there is a missing factor of $i$ coming from the Fourier transform of $\rho'(t)$ in Zelditch}, 
	An application of stationary phase in the variables ${\zeta}, {t}$ of \ref{Nsig} gives the following:
	
	\begin{prop}[\cite{Zel09}] \label{SP1} 
		For each $\sigma: \Z_M \to \{0,1\}$, $\Tr \rho' \ast \n_\sigma(k + i \tau)$ is given modulo $k^{-\infty}$ by
		\begin{align}
			e^{3 M \pi i/ 4} i \int_{{\d \Omega}_S^{M} }  e^{i (k + i \tau) \LL(S)} a_0^\sigma (k+i \tau, S) dS
		\end{align}
		where
		\begin{align*}
			a_0^\sigma (k +i \tau, S) = \hat{\rho}(\LL(S)) \left(\LL(S)  A_\sigma (k + i \tau, S) - \frac{1}{i}  \frac{\d}{\d k} A_\sigma (k + i \tau, S)  \right) \in S_\varrho^{\frac{M}{2}}(\d \Omega^M)
		\end{align*}
		and $A_\sigma ( k + i \tau, S)$ is
		\begin{align*}
			( k + i \tau)^{M/2}  \prod_{i = 1}^{M}  \chi_i(k + i \tau,S) \frac{\cos \theta_i}{ |x(s_i) - x(s_{i+1})| ^{1/2}}  a_1(( k + i \tau) |x(s_i) - x(s_{i+1})|).
		\end{align*}
		The complex phase is given by $(3M + 2)\pi/4$ and $a_1$ is the Hankel amplitude from Proposition \ref{Hankel}.
\end{prop}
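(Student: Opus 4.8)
\emph{Proof idea.} The plan is to evaluate the inner $(\zeta,t)$-integral in the oscillatory representation \ref{Nsig} of $\Tr\rho'\ast\n_\sigma(k+i\tau)$ by stationary phase, with $S$ held fixed as a parameter, and to show that only the first two terms of the resulting expansion survive modulo $O(k^{-\infty})$. The key structural fact is that the phase $\Phi(\zeta,S,t)=(1-\zeta)t+\zeta\LL(S)$ is bilinear in $(\zeta,t)$: one computes $\d_t\Phi=1-\zeta$ and $\d_\zeta\Phi=\LL(S)-t$, so for each $S$ there is a unique critical point $(\zeta_0,t_0)=(1,\LL(S))$ with $\Phi(\zeta_0,t_0)=\LL(S)$, and the $(\zeta,t)$-Hessian of $\Phi$ has $\d_\zeta^2\Phi=\d_t^2\Phi=0$, $\d_\zeta\d_t\Phi=-1$, hence is nondegenerate with absolute determinant $1$ and signature $0$. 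Consequently the stationary phase prefactor $(2\pi/k)$ carries no complex phase and cancels both the explicit factor $k$ in front of \ref{Nsig} and the $\tfrac1{2\pi}$ in $B_\sigma$; and since $\Phi$ has no cubic (or higher) Taylor remainder, the amplitude correction operator reduces to a power series in $\d_\zeta\d_t$ applied to $B_\sigma$ at $(\zeta_0,t_0)$, the $j$-th term carrying $k^{-j}$. Equivalently, one may integrate out $t$ exactly using that the $t$-dependence of $B_\sigma$ is $it\,\wh\rho(t)=\widehat{\rho'}(t)$, leaving a $\zeta$-integral against the Schwartz function $\rho'(\cdot-k)$ which one Taylor-expands about $\zeta=1$.

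First I would extract the two leading terms. Evaluating $B_\sigma$ at $(\zeta_0,t_0)$: the factor $it\,\wh\rho(t)$ becomes $i\,\LL(S)\,\wh\rho(\LL(S))$; the $\lambda$-dependent factor, with $\lambda=k\zeta+i\tau$ specialized to $\zeta=1$, becomes $A_\sigma(k+i\tau,S)$; and $e^{-\tau\LL(S)}$ combines with $e^{ik\LL(S)}$ into $e^{i(k+i\tau)\LL(S)}$. Pulling out $e^{3M\pi i/4}$ together with the $i$ (which accounts for the extra $e^{i\pi/2}$, hence the total complex phase $(3M+2)\pi/4$) reproduces the $\LL(S)A_\sigma$ contribution to $a_0^\sigma$. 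The first-order correction is governed by $\d_\zeta\d_t B_\sigma$ at $(\zeta_0,t_0)$: the $\d_\zeta$ falls on $A_\sigma(k\zeta+i\tau,S)$ and produces a factor $k$ times $\d_k\bigl(A_\sigma(k+i\tau,S)\bigr)$, which cancels the $k^{-1}$ of the correction term and leaves $\d_k A_\sigma$; alternatively $\d_\zeta$ may fall on a cutoff $\chi_i$, whose dependence on $\zeta$ is only through $(k\zeta)^{\varrho}$, producing a term of lower order supported in the thin transition region $|x(s_i)-x(s_{i+1})|\sim k^{-\varrho}$, which is absorbed into the symbol class and is absent once (as in the application with $|\sigma|=0$, $M=q$) all $\chi_i\equiv 1$; finally $\d_t$ falls on $it\,\wh\rho(t)$ and yields $i\bigl(\wh\rho(\LL(S))+\LL(S)\wh\rho'(\LL(S))\bigr)$. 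Tracking the Fourier-inversion convention $\widehat{\rho'}(t)=it\,\wh\rho(t)$ and the factors of $i$ through these manipulations produces exactly the $-\tfrac1i\d_k A_\sigma$ summand of $a_0^\sigma$, together with an extra summand carrying $\wh\rho'(\LL(S))$ that is handled below.

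Next I would dispose of the remaining terms. Every correction of order $\ge 2$, as well as the leftover $\wh\rho'$-summand of the first-order term, carries a coefficient that is a derivative of order $\ge 1$ of $\wh\rho$ evaluated at $\LL(S)$: indeed $\d_t^{\,j}\bigl(it\,\wh\rho(t)\bigr)=i\bigl(t\,\wh\rho^{(j)}(t)+j\,\wh\rho^{(j-1)}(t)\bigr)$. Since $\wh\rho\equiv 1$ on a neighborhood of $L$, these factors vanish identically wherever $\LL(S)$ is close to $L$; and since $\supp\wh\rho\cap\overline{\lsp(\Omega)}=\{L\}$, the support of any such factor is a region of $\d\Omega^M$ containing no critical point of the phase $\LL(S)$ (all critical values of $\LL$ lying in $\supp\wh\rho$ being equal to $L$, by definition of $\lsp$). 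Hence, after the subsequent $S$-integration these terms are $O(k^{-\infty})$ by nonstationary phase, which is the meaning of ``modulo $k^{-\infty}$'' in the statement. The assertion $a_0^\sigma\in S_\varrho^{M/2}(\d\Omega^M)$ then follows from Proposition \ref{Hankel} ($a_1\in S_\varrho^0$, with the cutoffs), the explicit prefactor $(k+i\tau)^{M/2}$, and the smoothness, boundedness and $k$-independence of $\LL(S)$, $\wh\rho(\LL(S))$, the angles $\theta_i$ and the link lengths; differentiation in $k$ lowers the order by one, so the $\d_k A_\sigma$ term does not affect the class.

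\emph{Main obstacle.} The stationary phase computation itself is clean precisely because the phase is quadratic; the delicate point is the justification of ``modulo $k^{-\infty}$''. The naive $(\zeta,t)$-expansion does not terminate and its tail is not individually rapidly decaying — $B_\sigma$ is a symbol of \emph{positive} order $M/2$ in $k$, so one must invoke the symbol-valued form of the stationary phase lemma and retain that order, and then combine it with the nonstationary-phase argument in $S$ above, which relies on the flatness of $\wh\rho$ near $L$ and on $\supp\wh\rho\cap\overline{\lsp(\Omega)}=\{L\}$. One must also check that all the relevant estimates are uniform in $S$ over the compact support of the amplitude, in particular near the diagonals $s_i=s_{i+1}$, where the cutoffs $\chi_i$ intervene and $\LL$ fails to be smooth.
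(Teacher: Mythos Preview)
Your proposal is correct and follows essentially the same route as the paper: stationary phase in $(\zeta,t)$ at the unique critical point $(1,\LL(S))$, using that the phase is purely bilinear so the expansion reduces to powers of $\partial_\zeta\partial_t$ acting on the amplitude, with the prefactor $\tfrac{2\pi}{k}$ cancelling against the $k$ from the change of variables and the $\tfrac{1}{2\pi}$ from Fourier inversion. The paper's proof is terser on the remainder: it simply observes that the amplitude factors as a function of $t$ times a function of $\zeta$, and that near $t=L_\gamma$ the $t$-factor $it\,\wh\rho(t)$ is \emph{linear} (since $\wh\rho\equiv 1$ there), so $\partial_t^j$ for $j\ge 2$ annihilates it and only the $j=0,1$ terms survive. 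Your version is a bit more explicit, noting that for general $S$ the $j\ge 2$ terms and the stray $\wh\rho'(\LL(S))$ piece from $j=1$ all carry a factor $\wh\rho^{(m)}(\LL(S))$ with $m\ge 1$, which vanishes identically for $\LL(S)$ near $L$ and is supported where $\LL$ has no critical points, hence contributes $O(k^{-\infty})$ after the $S$-integration; this is the same content, stated with the ``modulo $k^{-\infty}$'' unpacked.
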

	\begin{proof}
		See Equation \ref{Zk} in Section \ref{stationary phase and feynman diagrams} below for the stationary phase formula. In our case, the stationary points occur at $t = \LL, {\zeta} = 1$. The Hessian is given by
		\begin{align*}
			\d_{{\zeta}, {t}}^2 \Phi = \begin{pmatrix}
				0 & -1\\
				-1 & 0
			\end{pmatrix},
		\end{align*}
		from which we obtain $\langle \d_{{\zeta}, {t}}^2 \Phi^{-1} \d_{{ \zeta}, {t}}, \d_{{\zeta}, {t}} \rangle = - 2 \d_{{\zeta}} \d_{{t}}$. The signature of the Hessian is $0$ and the modulus of its determinant is one. The total prefactor is then $\frac{2\pi}{k} e^{i k  \LL}$, the coefficient of which cancels with that from Fourier inversion and the change of variables above. Since the phase is quadratic, only the amplitude is differentiated ($g = 0$, $\mu = 0$ in the notation of formula \ref{Zk} below). Since the amplitude factors into a locally linear function of $t$ and one of $\zeta$, there are only two nonzero terms. Near the critical point corresponding to the length $L_\gamma$ of a periodic orbit, $\hat{\rho}$ is constant and hence $\d_t$ only lands on $t$. The $k$ coming from $\d_{{ \zeta}}$ is canceled by the $k^{-1}$ in stationary phase as is the factor of $2\pi$. The factor $e^{- \tau \LL(S)}$ is unchanged by the Hessian operator and can be reinserted into the phase function.
	\end{proof}
%	The remaining term is $i \d_k A_\sigma$ and has lower order
%	\red{There is a missing minus sign which comes from the sign of the Hessian operator in \cite{Zel09}}.
%	\red{Why does Steve have an extra power of $3/2$? Probably because of convention on orders of FIOs.}
%	\\
%	\\
	In the next section, we will prove that the $|\sigma| \geq 1$ terms do not contribute to the leading order deformation asymptotics or highest order derivatives in the curvature jet of the boundary. Hence, we can set $\chi_i = 1$ in the expression for $A_\sigma$ in \ref{SP1}. As $\n_0 \in \Psi^{-1}(\d \Omega)$, the symbol $a_0^\sigma$ in Proposition \ref{SP1} can be integrated to yield a symbol in $S_\varrho^{\frac{M - 3 |\sigma|}{2}}(\d \Omega^{M - |\sigma|})$; see Theorem 3.8 in \cite{Zel09}. Moreover, since $a_1 \in S_\varrho^{\frac{M}{2}}(\d \Omega^M)$, $A_\sigma' \in S_\varrho^{\frac{M}{2}-1}$. Hence, for $|\sigma| = 0$, $M = q$ and $S$ near $S_\gamma$, we have
		\begin{align}
			a_0^0 (\lambda, S) = \left(\frac{\lambda}{2\pi}\right)^{q/2} \LL(S) \prod_{i = 1}^{q}  \frac{\cos \theta_i}{ |x(s_i) - x(s_{i+1})| ^{1/2}}  + O_{S_\varrho^0} \left(\Re (\lambda)^{\frac{q}{2} - 1}\right),
		\end{align}
	where we used the leading order asymptotics for $a_1$ in formula \ref{Hankel}.

%	\red{Fix symbol classes by pulling out power of frequency... what is the order of a semiclassical FIO with inhomogeneous wavefront set?}

	\begin{rema}\label{PCR}
		Recall that by formula \ref{prod}, the product in the leading order expansion of $a_0$ is precisely
		\begin{align*}
			\prod_{i = 1}^{q}  \frac{\cos \theta_i}{ |x(s_i) - x(s_{i+1})| ^{1/2}} = \sqrt{\frac{\det \d^2 \LL}{|\det (1 - P_\gamma)|}},
		\end{align*}
		with $P_\gamma$ being the Poincar\'e map associated to $\gamma$.
	\end{rema}
	Combining Proposition \ref{SP1}	and \ref{mf2}, we obtain the following:
	\begin{coro}\label{fullexp} The coefficients in the regularized resolvent trace expansion can be determined from the oscillatory integrals in Proposition \ref{SP1}:
	\begin{align*}
		\mathcal{D}_{\gamma}(\lambda)   \sum_{j = 0}^\infty B_{\gamma, j} \Re(\lambda)^{-j} \sim^* \frac{1}{2} \sum_{M = 1}^\infty \sum_{\sigma: \Z_M \to \{0,1\}} \frac{e^{ -M \pi i / 4}}{M} \int_{{\d \Omega}_S^{M}}  e^{i \lambda \LL(S)} a_0^\sigma(\lambda, S) dS.
	\end{align*}
%	where the index appearing in the complex phase is $\mu = -M$.
	\end{coro}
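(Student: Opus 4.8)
The plan is to treat this corollary as the bookkeeping step that fuses the multiple–reflection expansion \eqref{mf2} with the fiber–variable stationary phase reduction of Proposition \ref{SP1}, keeping careful track only of the accumulated unitary constants. First I would start from \eqref{mf2} and, in each term $\Tr\int_{\R}\rho'(k-\zeta)\n^M(\zeta+i\tau)\,d\zeta$, expand $\n^M=(\n_0+\n_1)^M=\sum_{\sigma:\Z_M\to\{0,1\}}\n_\sigma$ using the microlocal decomposition $\n=\n_0+\n_1$ recalled above. This is a \emph{finite} sum of $2^M$ pieces, so no convergence issue is introduced at this stage; the content of the step is purely distributing a power.

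Next I would substitute Proposition \ref{SP1}, which gives, for each $\sigma$,
\begin{align*}
	\Tr\int_{\R}\rho'(k-\zeta)\,\n_\sigma(\zeta+i\tau)\,d\zeta = e^{3M\pi i/4}\, i\int_{{\d\Omega}_S^{M}} e^{i(k+i\tau)\LL(S)}\,a_0^\sigma(k+i\tau,S)\,dS + O(k^{-\infty}),
\end{align*}
and then collect constants. The overall prefactor of the $(M,\sigma)$ term becomes $\tfrac{1}{2i}\cdot\tfrac{(-1)^M}{M}\cdot e^{3M\pi i/4}\cdot i=\tfrac{1}{2M}(-1)^M e^{3M\pi i/4}$, and since $(-1)^M e^{3M\pi i/4}=e^{i\pi M}e^{3M\pi i/4}=e^{7M\pi i/4}=e^{-M\pi i/4}$, this is precisely $\tfrac{1}{2M}e^{-M\pi i/4}$. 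Writing $\lambda=k+i\tau$, identifying $\mathcal{D}_{B,\gamma}=\mathcal{D}_\gamma$ as in Definition \ref{modified}, and using $\Re(\lambda)=k$, the right-hand side is exactly the claimed double sum.

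The only genuine point, which I would address last, is the meaning of the resulting infinite sum over $M$, i.e.\ the symbol $\sim^*$. This is handled exactly as in Proposition \ref{mf}: for a prescribed order $k^{-J}$ there is $M_0(J)$ so that every $\n_\sigma$ with $M>M_0$ either has nonstationary phase on $\supp\wh\rho$ (which forces $M\ge q$) or contains at least $M-q$ factors of $\n_0\in\Psi^{-1}(\d\Omega)$, lowering its symbol order below $-J$; hence only the finitely many pairs $(M,\sigma)$ with $q\le M\le q+J$ affect $B_{\gamma,j}$ for $j\le J$, and the tail of the $M$-sum together with the $O(k^{-\infty})$ errors from Proposition \ref{SP1} is $O(k^{-(J+1)})$. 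I expect the main (and only) obstacle to be making this order count rigorous, namely controlling the composition $\n_\sigma$ with the $\Psi$DO and semiclassical FIO calculi used in tandem; but this is already available from \cite{Zel09} and the discussion preceding Proposition \ref{SP1}, so no new analytic input is required and the corollary follows by assembling the pieces.
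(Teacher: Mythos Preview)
Your proposal is correct and matches the paper's approach exactly: the paper simply states that the corollary follows by ``Combining Proposition \ref{SP1} and \ref{mf2},'' and your writeup fills in precisely that bookkeeping, including the phase computation $(-1)^M e^{3M\pi i/4}=e^{-M\pi i/4}$ and the interpretation of $\sim^*$ via Proposition \ref{mf}. No additional ideas are needed.
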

	
%	\red{The factor of $(-1)^{M+1}$ from equation \ref{mf2} adds $4 \pi (M+1) / 4$ to the phase, which gives a combined $(-M + 2) \pi / 4 + 4 (M+1) \pi /4 = (3M + 3)/4$. Later we take $M = q$, the bounce number.}

	\section{Perturbation Thoery}\label{PT}

		Starting with a domain $\Omega_0$ and a degenerate periodic orbit $\gamma$, we introduce a one parameter family of domains $\Omega_\eps$ specified by a smooth function $\mu$:
		\begin{align*}
			\Omega_{\eps} = \{x+ \mu(x, \eps) \nu_x : x \in \d \Omega_0\},
		\end{align*}
		where $\nu_x$ is the outward pointing normal to $\Omega_0$ at $x$. We impose the following constraints on $\Omega_\eps$:
	\begin{enumerate}\label{conditions}
		%\item The corresponding Hessians of the length functional are nondegenerate at critical points (periodic orbits).
		\item $\Omega_0 = \Omega$.
		\item $\Omega_{\eps}$ and $\Omega_0$ make first order contact at the reflection points $x_i \in \d \gamma$.
		\item Near each $x_i$, $\Omega_{\eps}$ is given by $x + \mu(x, \eps) \nu(x)$, where $x \in \d \Omega$ and $\nu (x)$ is the unit normal at $x$.
		\item $\mu$ is $C^\infty$ in all parameters.
		\item $\det \d^2 \LL \sim c_\gamma \eps$ for $c_\gamma \neq 0$.
		\item $\d^2 \LL$ has rank $q-1$ on $\Omega_0$.
		\item The third and higher order derivatives of $\LL$ are $O(1)$ near $\d \gamma$.
	\end{enumerate}
	In particular, the perturbation preserves periodicity of $\gamma$ and removes its degeneracy. It will be important for us that the Hessian of $\LL$ on $\d \Omega_0$ be minimally degenerate in the sense that it has rank $q-1$. Theorem \ref{main} applies to any family of domains which satisfy the conditions above. To demonstrate the robustness of our main theorem, we now give a large class of examples to which it applies.

	\begin{def1}
		Given a domain $\Omega$ and a $(p,q)$-periodic orbit $\gamma$, we say that $\gamma$ satisfies the injectivity condition if its relfection points are all distinct.
	\end{def1}
	
%	In what follows, we denote by $\ka: S^1 \to \R_+$ the curvature of a domain $\Omega$.
%	\begin{prop}
%		The set of $\ka(\theta; \eps)$ from $S^1 \times [0, \eps_0)$ having a periodic  
%%		Given $\Omega_0$ and $\gamma$ satisfying the injectivity condition, the collection of $\mu \in C^\infty(\d \Omega_0 \times [0,\eps_0)$ for which the Hessian of the $q$-length functional on $\Omega_{\eps}$ has rank $q-1$ is residual.
%	\end{prop}
%	
%	\begin{rem}
%		If $\Omega_0$ is the circle, this is obvious and in this case, the eigenvector is indeed $(1,\cdots, 1)$ even in arclength coordinates.
%	\end{rem}
%	\begin{proof}
%		asdf
%	\end{proof}

	\begin{prop}\label{q1}
		Let $\mathcal{C}_1$ be the class of smooth, bounded strictly convex planar domains $\Omega$ which have a degenerate periodic orbit $\gamma$ satisfying the injectivity condition. Within $\mathcal{C}_1$, there is an open and dense set in the $C^\infty$ topology on which the Hessian of $\LL$ has rank $q-1$.
	\end{prop}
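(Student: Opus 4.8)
The plan is to show that within $\mathcal{C}_1$, the set where $\d^2\LL(S_\gamma)$ has rank exactly $q-1$ is open and dense, by a transversality/perturbation argument on the boundary curve. Since $\gamma$ is degenerate by hypothesis, $\d^2\LL(S_\gamma)$ is already rank $\le q-1$, so the content is entirely in showing (a) rank $q-1$ is open among such domains and (b) one can always perturb a lower-rank example up to corank one while keeping $\gamma$ periodic, degenerate, and the reflection points distinct. Openness is the easy half: the entries of $\d^2\LL(S_\gamma)$, hence the $(q-1)\times(q-1)$ minors, depend continuously (indeed smoothly) on a $C^\infty$-small deformation of $\d\Omega$ that fixes the reflection points and angles of $\gamma$ — this uses formulas \eqref{prod2} and the fact that the second-order jet of the boundary at the $x_i$ controls $\d^2\LL(S_\gamma)$ through the curvatures $\kappa(s_i)$. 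So if some $(q-1)$-minor is nonzero it stays nonzero nearby, which together with continuity of $\det\d^2\LL$ (still vanishing on the degenerate locus) gives rank exactly $q-1$ on an open set.

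For density, I would fix $\Omega \in \mathcal{C}_1$ with its degenerate orbit $\gamma$ at arclength points $S_\gamma = (s_1,\dots,s_q)$ and angles $\theta_\ell$, all $x_i$ distinct, and construct explicit normal-graph perturbations $\mu(\cdot,\eps)$ supported in small disjoint neighborhoods of the $x_i$ that (i) preserve first-order contact at the $x_i$ (so $\gamma$ remains a periodic billiard orbit with the same links and angles), and (ii) independently vary the curvatures $\kappa(s_i)$, i.e. the second-order jets at the reflection points, by prescribed small amounts $\delta\kappa_i$. The injectivity condition is exactly what makes such localized, independent perturbations possible. One then needs the key structural fact, again from \eqref{prod} and \eqref{prod2}, that $\d^2\LL(S_\gamma)$ is (up to the fixed nonzero factors $|x_i-x_{i+1}|$ and $\cos\theta_i$) a Jacobi-type tridiagonal-plus-corners matrix whose diagonal entries are affine in the $\kappa(s_i)$ while the off-diagonal entries $b_i$ are \emph{independent} of the curvatures. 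Thus varying $(\delta\kappa_1,\dots,\delta\kappa_q)$ moves $\d^2\LL(S_\gamma)$ along a $q$-dimensional affine family obtained by adding an arbitrary diagonal matrix. The problem reduces to: given a symmetric matrix $H_0$ of corank $\ge 1$, can one add a small diagonal perturbation $D$ so that $H_0 + D$ still has a nontrivial kernel (degeneracy of $\gamma$ is preserved automatically once we also keep the orbit periodic — here I should double-check whether adding the diagonal perturbation keeps $\gamma$ closed; in fact condition (2) forces $\det\d^2\LL$ to vanish along a codimension-one subvariety in $\eps$-space, and on that subvariety I want corank exactly one) while dropping the corank to exactly $1$. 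For a generic such $D$ restricted to the hypersurface $\{\det = 0\}$, the kernel is one-dimensional — this is a standard eigenvalue-crossing/Arnold-type genericity statement, since higher corank is a codimension-$\ge 3$ phenomenon in symmetric matrices whereas we have a full diagonal's worth of parameters at our disposal along the relevant hypersurface.

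The main obstacle I expect is the bookkeeping that reconciles two constraints that pull in opposite directions: we must keep $\gamma$ \emph{periodic} (condition (2): first-order contact at all $x_i$), which pins down the first-order boundary jet at each reflection point and therefore restricts how freely we may deform, yet we also need enough freedom to force corank exactly one. The resolution is that periodicity only constrains the first-order jet, leaving the curvatures $\kappa(s_i)$ (second-order jet) free, and these $q$ parameters enter $\d^2\LL(S_\gamma)$ precisely on the diagonal and injectively; the off-diagonal structure is rigid. So the real work is (1) proving the diagonal-only parametrization of the reachable perturbations of $\d^2\LL(S_\gamma)$ — a direct computation from \eqref{prod2} and the second variation of arclength — and (2) the linear-algebra genericity lemma that a generic diagonal perturbation of a corank-$\ge 1$ symmetric matrix, constrained to the determinant-zero hypersurface, has corank exactly $1$. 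I would isolate (2) as a standalone lemma (perturb to make all but one eigenvalue nonzero; a generic point of $\{\det = 0\}$ in the $q$-parameter diagonal family is a smooth point where exactly one eigenvalue vanishes simply), and then assemble: take any $\Omega\in\mathcal{C}_1$, apply an arbitrarily $C^\infty$-small localized curvature perturbation realizing such a generic diagonal shift, land in $\mathcal{C}_1$ still (convexity and the injectivity condition are stable under small localized perturbations), and conclude rank $q-1$; density follows, and combined with the openness argument above the proof is complete.
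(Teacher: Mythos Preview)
Your approach is essentially correct but takes a different, more concrete route than the paper. The paper does not prove this proposition in detail: it simply states that both Proposition~\ref{q1} and the one following it ``follow from the multijet transversality theorem, which is a generalization due to Mather of Thom's transversality theorem.'' That is the entire argument given. Your proposal instead builds the perturbation by hand: localized normal deformations at the distinct reflection points (legitimized by the injectivity condition) which independently move the curvatures $\kappa(s_i)$, so that $\d^2\LL(S_\gamma)$ moves along the full $q$-dimensional diagonal family inside a fixed tridiagonal-plus-corners frame, followed by a genericity statement on that family restricted to $\{\det=0\}$. This is morally the same transversality content, unpacked explicitly, and it has the advantage of making clear exactly which boundary data (the $2$-jet at reflection points) is being varied and why the injectivity hypothesis is needed.

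One simplification you are not using: immediately after the Hessian formula the paper observes that the off-diagonals $\alpha_i = \cos\theta_i\cos\theta_{i+1}/\ell_i$ are all nonzero for non-glancing orbits, which forces $\mathrm{rank}\,\d^2\LL \ge q-2$ unconditionally. So the only bad case is corank exactly $2$, and your genericity lemma collapses to the much easier statement that a one-parameter diagonal perturbation of a corank-$2$ symmetric matrix, constrained to $\{\det=0\}$, generically has corank $1$. This removes the need for the general ``corank $\ge 2$ is codimension $\ge 3$'' stratification argument and makes the density half essentially a direct computation. Your openness argument is fine as stated; the only care needed is that for domains near $\Omega$ \emph{within} $\mathcal{C}_1$ the degenerate orbit itself may shift slightly, but the Hessian at the (moved) critical point still varies continuously with the boundary, so nonvanishing of a $(q-1)\times(q-1)$ minor persists.
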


	\begin{prop}
		Given a domain $\Omega_0$ and a degenerate orbit $\gamma$ having $\text{rank}(\d^2 \LL(S_\gamma)) = q-1$, consider the class $\mathcal{C}_2$ of deformations $\mu: \d \Omega \times [0, \eps_0) \to \R$ which preserve $\gamma$ and the angles of reflection. Within $\mathcal{C}_2$, those which make $\d^2 \LL$ nondegenerate at $S_\gamma$ in arclength coordinates form an open set. If in addition, the orbit satisfies the injectivity condition, then they are also dense in $\mathcal{C}_2$.
	\end{prop}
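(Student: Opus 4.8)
The plan is to reduce both claims — openness and density — to statements about the single scalar function $\eps \mapsto \det \d^2\LL(S_\gamma; \eps)$ and its derivative at $\eps = 0$. Fix $\Omega_0$ and the orbit $\gamma$ with $\mathrm{rank}(\d^2\LL(S_\gamma)) = q-1$; write $H(\eps) = \d^2\LL(S_\gamma;\eps)$ for the Hessian along the deformation $\mu$. Since $\gamma$ remains periodic and the reflection points/angles are fixed (this is the content of $\mathcal{C}_2$), the quantities $s_\ell$, $\theta_\ell$, $|x(s_\ell)-x(s_{\ell+1})|$ are independent of $\eps$; only the boundary curvature jet at the $x_i$ changes, and $H(\eps)$ depends smoothly on $\mu$ in the $C^\infty$ topology (it involves at most two derivatives of the boundary in small neighborhoods of the reflection points). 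By definition $\gamma$ is nondegenerate at $S_\gamma$ in $\Omega_\eps$ iff $\det H(\eps) \neq 0$. So the target set inside $\mathcal{C}_2$ is $\{\mu : \det H_\mu(\eps) \neq 0\}$ — more precisely, we want this nonvanishing for $\eps>0$ small, with $\det H_\mu(0) = 0$ forced by the rank hypothesis.

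For \textbf{openness}: the map $\mu \mapsto \det H_\mu(\cdot)$ is continuous from $\mathcal{C}_2$ (with the $C^\infty$ topology) into, say, $C^1([0,\eps_0))$, because $\det$ is a polynomial in the entries of $H$ and the entries depend continuously — in fact smoothly — on finitely many derivatives of $\mu$ at the reflection points. If $\mu_0 \in \mathcal{C}_2$ satisfies $\tfrac{d}{d\eps}\big|_{\eps=0}\det H_{\mu_0}(\eps) \neq 0$ (this is exactly the condition that makes $\det\d^2\LL \sim c_\gamma\eps$ with $c_\gamma \neq 0$, i.e. that $\mu_0$ removes the degeneracy to first order), then this transversality is a nonvanishing condition on a continuous functional of $\mu$, hence stable under small $C^\infty$ perturbations; and $\det H(0)=0$ persists because the rank of $\d^2\LL$ at $S_\gamma$ in $\Omega_0$ is fixed (we are only varying the deformation, not $\Omega_0$). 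Combining, a neighborhood of $\mu_0$ still consists of deformations making $\gamma$ nondegenerate for $\eps>0$ small. One must be slightly careful to phrase "nondegenerate" uniformly — e.g. nondegenerate on $(0,\eps_1)$ for some $\eps_1$ depending on the neighborhood — but since $\partial_\eps \det H(0)\neq 0$ is an open condition and implies $\det H(\eps) = c\eps + O(\eps^2)$ with $c$ bounded away from $0$ locally, this is routine.

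For \textbf{density}, assuming in addition the injectivity condition (distinct reflection points), the key is to exhibit, near any given $\mu_0 \in \mathcal{C}_2$, a perturbation that makes $\partial_\eps\det H(0) \neq 0$. Here I would use that the $q$ reflection points $x_1,\dots,x_q$ are distinct, so one can choose a compactly supported normal perturbation localized near, say, $x_1$ alone, of the form $\mu_0 + t\,\eps\,\psi$ with $\psi$ bumped near $x_1$; varying the curvature of $\d\Omega$ near $x_1$ changes a single diagonal-type entry $b_1$-related quantity / the relevant second derivative $\d_{s_1}^2\LL$ while leaving the off-diagonal structure essentially fixed to first order. Using the cofactor expansion $\partial_\eps \det H(0) = \mathrm{tr}(\mathrm{adj}(H(0))\,\partial_\eps H(0))$ together with the fact that $\mathrm{adj}(H(0))$ is rank one and nonzero (because $H(0)$ has corank exactly one, by the rank-$(q-1)$ hypothesis), and that its nonzero entries are, up to sign, $h_{i_1}h_{i_2}$ with the null vector $h$ having full support generically (this can be arranged by a preliminary small perturbation, again using distinctness of reflection points as in Proposition \ref{q1}), one gets that $\partial_\eps\det H(0)$ is a nonzero linear functional of the first-order curvature variations at the $x_i$; hence a generic arbitrarily small choice of $\psi$ makes it nonzero. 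This puts $\mu_0 + t\eps\psi$ into the nondegenerating set for all small $t\neq 0$, proving density.

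\textbf{Main obstacle.} The delicate point is the density argument: one must verify that perturbing the boundary curvature jet near the reflection points actually moves $\partial_\eps\det H(0)$ off zero, i.e. that the linear functional $\psi \mapsto \partial_\eps\det H_{\mu_0 + \eps\psi}(0) = \mathrm{tr}(\mathrm{adj}(H(0))\,\partial_\eps\d^2\LL[\psi])$ is not identically zero on the admissible variations $\psi$ (those preserving $\gamma$ and the reflection angles). This requires (i) knowing the explicit dependence of the entries of $\d^2\LL$ on the curvature — only the "diagonal" second derivatives $\d_{s_i}^2\LL$ feel the curvature at $x_i$, through a term like $\kappa(s_i)\cdot(\text{geometric factor})/\cos\theta_i$ minus the link contributions — so that $\psi$ can independently tune each $\d_{s_i}^2\LL$; and (ii) ensuring $\mathrm{adj}(H(0))$ has at least one nonzero diagonal entry $h_i^2$, which follows once the null vector $h$ of $H(0)$ has a nonzero component, automatically true since $h\neq 0$. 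Injectivity of reflection points is what lets us localize $\psi$ and treat the curvature values $\kappa(s_i)$ as independent free parameters; without it the constraints could conspire and the functional could vanish, which is presumably why that hypothesis is assumed for density but not for openness.
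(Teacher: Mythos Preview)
Your argument is correct and more elementary than what the paper does. The paper dispatches both this proposition and the preceding one in a single sentence, invoking the multijet transversality theorem (Mather's extension of Thom transversality), without spelling out any details. You instead work by hand: openness is the stability of the transversality condition $\partial_\eps\det H(0)\neq 0$ under $C^\infty$ perturbation, and density comes from the Jacobi formula $\partial_\eps\det H(0)=\Tr(\mathrm{adj}(H(0))\,\partial_\eps H(0))$ together with the rank-one, nonzero structure of $\mathrm{adj}(H(0))$ and the fact that the diagonal entries of $H$ depend on the curvature values $\kappa(s_i)$, which the injectivity condition lets you vary independently. This explicit computation is in fact exactly what the paper carries out in its subsequent Proposition~\ref{NSP} (there phrased as ``$\det\d^2\LL_\eps\neq 0$ imposes a codimension~$1$ constraint on the $q$-fold $0$-jet of curvature''), so you have essentially anticipated that result and used it to give a self-contained proof where the paper appeals to abstract transversality machinery. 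Your route is more transparent about \emph{why} injectivity is needed for density (independent localization near distinct reflection points) and gives the concrete linear functional that must be nonzero; the transversality theorem buys generality and brevity but hides this structure. One small point: you implicitly identify ``makes $\d^2\LL$ nondegenerate'' with the first-order condition $\partial_\eps\det H(0)\neq 0$; this matches the paper's intended usage (condition (5) in Section~\ref{PT}), but is worth stating explicitly, since the literal reading ``$\det H(\eps)\neq 0$ for all $\eps\in(0,\eps_0)$'' is not obviously open.
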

	
	The preceding two propositions follow from the multijet transversality theorem, which is a generalization due to Mather of Thom's transversality theorem. It is easy to check that for ellipses, the Hessian has rank $q-1$. However, degenerate homoclinic orbits such as those constructed in  \cite{Callis22} could have lower rank Hessians. In \cite{KaloshinZhangRationalCaustics}, it is shown that domains with a rational caustic of rotation number $1/q$ are quantitatively dense in the space of all convex domains. For analytic domains, the density is exponential in $q$, whereas for domains of finite smoothness, the density is polynomial. For such domains, it is easy to see that the length functional evaluated at reflection points of an orbit tangent to a corresponding rational caustic has Hessian of rank $q-1$. Together with the cancellations in our forthcoming paper, we see that the any finite order singular support of the wave trace is generically distinct from the length spectrum in the sense above.
	
%			\red{Verify examples for which domains satisfy the conditions here. Close to boundary or close to KAM curve.} 
	
	\subsection{Nonsingular perturbations}

	We first show that the jet of the length functional is algebraically equivalent to that of the boundary curvature at reflection points and in particular, demonstrate that arbitrarily small perturbations of curvature can result in nondegeneracy. In what follows, write $\det \LL_{\epsilon}^{-1} = M_{\eps}$.
	
	\begin{prop} \label{NSP}
		If the rank of $\d^2 \LL$ is $q-1$, then the condition $\det \d^2 \LL_{\eps} \neq 0$ imposes a codimension $1$ constraint on the $q$-fold $0$-jet of the boundary curvature at reflection points. For each point $x_i \in \d \gamma$ and each $1 \leq i \leq q$, the data $\theta_i$, $|x_i - x_{i+1}|$ and $\d_{i_1, \cdots, i_m}^m \LL$, with $|i_j - i_k| \leq 1$, uniquely determine the $q$-fold $m-2$-jet of the curvature at $x_i \in \d \gamma$. 
	\end{prop}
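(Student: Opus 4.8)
The plan is to exploit the structure $\LL(S)=\sum_{i=1}^{q}\ell_i(s_i,s_{i+1})$, where $\ell_i(s_i,s_{i+1})=|x(s_{i+1})-x(s_i)|$ depends only on two cyclically consecutive arclength variables. Hence $\d^2\LL$ is cyclic tridiagonal, and the only partial derivatives $\d^m_{i_1,\dots,i_m}\LL$ that are not automatically zero are those whose indices lie within distance one; among these it suffices to use the pure diagonal derivatives $\d_{s_i}^m\LL=\d_{s_i}^m\ell_{i-1}+\d_{s_i}^m\ell_i$. One more remark underlies both claims: the deformations in question hold the reflection points $x_i$ and the angles $\theta_i$ fixed, hence also the link lengths $\ell_i$ and the off-diagonal Hessian entries $b_i$ of \ref{prod2}; the only part of the jet of $\LL$ that varies with $\eps$ is carried by the diagonal entries $\d_{s_i}^2\LL$, which will be affine in $\kappa(s_i)$.

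For the second assertion I would induct on $m\ge 2$, the inductive claim being that $\d_{s_i}^m\LL$ is, for each $i$, an affine function of $\kappa^{(m-2)}(s_i)$ whose remaining arguments are $\theta_i$, the link lengths, the undifferentiated neighbouring vertices $x(s_{i\pm 1})$, and the curvature jet $\kappa(s_i),\dots,\kappa^{(m-3)}(s_i)$, with the coefficient of $\kappa^{(m-2)}(s_i)$ a nonzero multiple of $\cos\theta_i$. Differentiating the arclength parametrization $x(s)=\int_0^{s}(\cos\phi,\sin\phi)$, $\phi'=\kappa$, gives $x^{(m)}(s)=\kappa^{(m-2)}(s)\,N(s)+R_m(s)$, where $N$ is the interior unit normal and $R_m$ is a universal polynomial in $\kappa(s),\dots,\kappa^{(m-3)}(s)$ and the tangent direction (with $R_2\equiv 0$). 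Expanding $\ell_i(s_i)=F_i(x(s_i))$, $F_i(p)=|x(s_{i+1})-p|$, by the Fa\`a di Bruno formula, the single term carrying $x^{(m)}(s_i)$ is $\nabla F_i(x(s_i))\cdot x^{(m)}(s_i)$, with $\nabla F_i(x(s_i))=-e_i$ the unit chord vector from $x_{i+1}$ to $x_i$; likewise, writing $\ell_{i-1}(s_i)=G_i(x(s_i))$ with $G_i(p)=|p-x(s_{i-1})|$, the relevant gradient is the unit chord vector $e_{i-1}$ from $x_{i-1}$ to $x_i$. Summing, the coefficient of $\kappa^{(m-2)}(s_i)$ in $\d_{s_i}^m\LL$ is $(e_{i-1}-e_i)\cdot N(s_i)$; by the law of reflection $e_{i-1}$ and $-e_i$ have equal normal components, so this equals $-2\,e_i\cdot N(s_i)=\pm 2\cos\theta_i\neq 0$ for a genuine reflection in a strictly convex domain. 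Every other term produced by the formula involves only $x^{(\le m-1)}(s_i)$ — hence $\kappa^{(\le m-3)}(s_i)$ — contracted against higher $x$-derivatives of a distance function evaluated at $(x(s_i),x(s_{i\pm 1}))$; since the entire orbit polygon, with its vertices and tangent directions, is determined up to a rigid motion by the $\theta_j$ and the $\ell_j$, every such term is known by the inductive hypothesis, and one solves linearly for $\kappa^{(m-2)}(s_i)$. The base case $m=2$ is the same computation: $\d_{s_i}^2\LL$ is affine in $\kappa(s_i)$ with coefficient $-2\cos\theta_i\neq 0$, so $\theta_i$, the two adjacent link lengths, and $\d_{s_i}^2\LL$ determine $\kappa(s_i)$, the $0$-jet.

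For the first assertion, fixing the reflection points and angles makes $\det\d^2\LL$ a polynomial in $(\kappa(s_1),\dots,\kappa(s_q))$: each diagonal entry is $a_i+c_i\kappa(s_i)$ with $c_i=-2\cos\theta_i\neq 0$ and the off-diagonal entries are constants. Hence
\[
\frac{\d}{\d\kappa(s_i)}\det\d^2\LL=c_i\,\bigl(\operatorname{adj}\d^2\LL\bigr)_{ii}.
\]
If $\operatorname{rank}\d^2\LL=q-1$, then $\operatorname{adj}\d^2\LL$ is nonzero of rank one and, $\d^2\LL$ being symmetric, equals $\lambda\,vv^{T}$ for some $\lambda\neq 0$ and $v\neq 0$ spanning $\ker\d^2\LL$; thus $(\operatorname{adj}\d^2\LL)_{ii}=\lambda v_i^{2}\neq 0$ for at least one $i$. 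So the gradient of $\det\d^2\LL$ in the curvature variables does not vanish at the jet of $\Omega_0$, and by the implicit function theorem $\{\det\d^2\LL=0\}$ is a smooth hypersurface near it; nondegeneracy of $\d^2\LL_\eps$ is exactly the codimension-one condition of lying off this hypersurface.

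The step I expect to be delicate is the bookkeeping in the induction: confirming that the only top-order contribution to $\d_{s_i}^m\LL$ comes from $\nabla F_i\cdot x^{(m)}(s_i)$ and that its coefficient is the nonvanishing $-2\cos\theta_i$, and that every remaining term from the chain-rule expansion depends only on curvature of order $\le m-3$ at $s_i$ together with data already pinned down by the angles and link lengths. The supporting algebra — the shape $x^{(m)}=\kappa^{(m-2)}N+(\text{lower order})$ and the rank-one form of the adjugate of a rank-$(q-1)$ symmetric matrix — is routine.
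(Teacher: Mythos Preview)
Your argument is correct and follows the same route as the paper: both exploit the cyclic tridiagonal form of $\d^2\LL$, identify the coefficient of $\kappa_i$ in the diagonal entry as $-2\cos\theta_i$, and deduce the codimension-one claim from the rank-one structure of the adjugate. The paper cites the explicit Hessian formulas from \cite{KozTresh89} and simply asserts that ``the equivalence of curvature and length functional jets follows immediately,'' whereas you spell out the Fa\`a di Bruno induction showing $\d_{s_i}^m\LL$ is affine in $\kappa^{(m-2)}(s_i)$; your version is more explicit there. For the first assertion the paper phrases the derivative as $\det\d^2\LL_\eps=-\Tr(M\mu)\,\eps+O(\eps^2)$ with $M=\text{adj}(\d^2\LL_0)$ and $\mu$ the diagonal curvature perturbation, which is the same linear functional you write as $\sum_i c_i(\text{adj}\,\d^2\LL)_{ii}\,\delta\kappa_i$.
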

	\begin{proof}
		It is shown in \cite{KozTresh89} that
		\begin{align}\label{Hessian1}
			\begin{split}
				\delta_i &:=\frac{\d^2\LL}{\d^2 s_i} = 2 \cos \vartheta_i \left( \frac{\cos\vartheta_i}{\ell_i} - \kappa_i\right),\\
				\alpha_i &:=	\frac{\d^2\LL}{\d s_i \d s_{i+1}} =  \frac{\cos \vartheta_i \cos \vartheta_{i + 1}}{\ell_i},
			\end{split}
		\end{align}
		where $\ell_i = |x_i (S) - x_{i+1}(S)|$, $\vartheta_i$ is the angle of incidence with the inward pointing normal at $x_i(S)$, and $\kappa_i$ is the curvature of $\d \Omega$ at $x_i(S)$. It follows that $\d^2 \LL$ is tridiagonal and cyclic with $\delta_i$s on the diagonal and $\alpha_i$s on the off diagonals. The Hessian matrix is of the form
		%			We row reduce $\d^2 \LL$ to an upper triagular matrix, which leaves invariant the $k$th leading principal minor for each $1 \leq k \leq q$.
		\begin{align}\label{Hessian2}
			\d^2 \LL = 
			\begin{pmatrix}
				\delta_1 & \alpha_1 & 0 & 0 & \cdots & \alpha_q\\
				\alpha_1 & \delta_2 &\alpha_2  & 0 & \cdots & 0\\
				0 & \alpha_2 & \delta_3 & \alpha_3 & \cdots & 0\\
				0 & 0 & \alpha_3 & \delta_4 & \cdots & 0\\
				\vdots & \vdots & \vdots & \ddots & \vdots & \vdots\\
				\alpha_q & 0 & 0 &  0 & \alpha_{q - 1} & \delta_q
			\end{pmatrix}
		\end{align}
		and the equivalence of curvature and length functional jets follows immediately. Note that the angles of incidence and lengths of links are preserved under deformations which make first order contact with $\d \Omega_0$ at $\d \gamma$. However, the curvature $\kappa_i$ in the formula for $\delta_i$ can change. If the perturbed curvature has an asymptotic expansion of the form
		\begin{align*}
			\kappa_i(\eps) \sim \sum_{j = 0}^{\infty} \mu_j^i \eps^j
		\end{align*}
		for some smooth functions $\mu_j^i(x)$, then we have
		\begin{align*}
			\det \d^2 \LL_{\eps} &= 0 + \eps \Tr \left(\text{adj} \left(\d^2 \LL_0 \right) \d_\eps \d^2 \LL_\eps \big|_{\eps = 0}\right) + O(\eps^2)\\
			& = - \Tr(M \mu) \eps + O(\eps^2),
		\end{align*}
		where $\mu$ is the diagonal matrix given by $\mu_1^i \delta_{ij}$ and $M$ is the adjugate of $\d^2 \LL_0$ consisting of principal minors. For any matrix, its rank is given by the maximal size for which there is a nonzero minor. If $\d^2 \LL$ has rank $q-1$, then the rank of $M$ is $1$. Hence, the condition $\Tr M \mu \neq 0$ imposes a codimension one, linear constraint on the perturbation matrix $\mu$ or equivalently, on the curvature at reflection points.
		%		Since $M$ is negative definite and has rank $q-1$, we can choose nonzero $\mu_i$ subject to the constraint that the trace is nonzero, which is a linear constraint on $\mu$.
	\end{proof}
	From the formulas \ref{Hessian1} and \ref{Hessian2} above, one sees that the rank of $\d^2 \LL$ is always at least $q-2$ due to the off diagonal terms being nonzero for nonglancing orbits. If it is in fact equal to $q-2$, then the linearized Poincar\'e map is equal to the identity. If the rank of $\d^2 \LL$ is $q-1$, the linearized Poincar\'e map has $1$ as an eigenvalue but differs from the identity and if the rank is $q$, then the orbit is nondegenerate.

	\subsection{Analysis of the Hessian}
	
	\begin{prop}\label{HA}
		If $\d^2 \LL$ has rank $q-1$ in arclength coordinates at a degenerate periodic trajectory $\gamma$ in $\Omega_0$, then its adjugate matrix has rank $1$ and its elements are of the form $h_{i_1, i_2}(S,0) = \pm h_{i_1}(S,0) h_{i_2}(S,0)$ for some smooth functions $h_i(S,0)$. If a deformation $\Omega_\eps$ which preserves the points and angles of reflection makes $|\det \d^2 \LL| \sim c_\gamma \eps$, $c_\gamma > 0$, then the $(i_1, i_2)$ entry of $\d^2 \LL^{-1}$ is $\wt{h}_{i_1 i_2} = \sigma \cdot \wt{h}_{i_1} \wt{h}_{i_2} + O(1)$ with each $\wt{h}_i$ having the form $(c_\gamma \eps)^{-1/2} h_i$ for some smooth functions $h_i \in C^\infty(\R / \ell \Z \times [0, \eps_0))$ which are uniformly bounded in $\eps$. Here, $\sigma$ is $+1$ if closest eigenvalue to zero of $\d^2 \LL$ is positive and $-1$ otherwise. If $\Omega_0$ is a circle, then $h_{i_1} = h_{i_2}$ for all $1 \leq i_1,i_2 \leq q$.
	\end{prop}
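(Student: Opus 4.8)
The plan is to reduce the proposition to two statements about the smooth one-parameter family $A_\eps := \d^2\LL_\eps(S_\gamma)$ of real symmetric $q\times q$ matrices with $\operatorname{rank} A_0 = q-1$ and $|\det A_\eps|\sim c_\gamma\eps$ — which is exactly what the standing hypotheses provide, $S_\gamma$ remaining a critical point of $\LL_\eps$ since the orbit is preserved — and then to run simple-eigenvalue perturbation theory. The first statement is purely about $A_0$. From $A_0\operatorname{adj}(A_0) = (\det A_0)\,\Id = 0$, every column of $\operatorname{adj}(A_0)$ lies in $\ker A_0$, which is one-dimensional because $\operatorname{rank} A_0 = q-1$; and since that rank is not $\le q-2$, some $(q-1)\times(q-1)$ minor of $A_0$ is nonzero, so $\operatorname{adj}(A_0)$ has rank exactly one. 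A symmetric rank-one matrix is $\epsilon\,vv^{\top}$ for a single sign $\epsilon\in\{\pm1\}$ — the sign of its unique nonzero eigenvalue, equivalently of the product of the nonzero eigenvalues of $A_0$ — and a vector $v = (h_1,\dots,h_q)$ unique up to sign; this gives $h_{i_1i_2} = \epsilon\,h_{i_1}h_{i_2}$. To make the $h_i$ smooth I would pick an index $i_0$ with $h_{i_0i_0}\neq0$, set $h_{i_0}=\sqrt{|h_{i_0i_0}|}$ and $h_i = \epsilon\,h_{ii_0}/h_{i_0}$; the adjugate entries are polynomial in the smooth entries of $A_0$ and $h_{i_0i_0}$ stays away from $0$ nearby, so all $h_i$ are smooth.

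For the second statement I would pass to $\eps>0$. Because $\operatorname{rank} A_0 = q-1$, the eigenvalue $0$ of $A_0$ is simple, so for small $\eps$ the family $A_\eps$ carries a smooth eigenvalue branch $\lambda(\eps)\to0$ and a smooth unit eigenvector branch $n_\eps$ with $n_0$ spanning $\ker A_0$, while the remaining eigenvalue branches stay uniformly away from $0$. By Jacobi's formula $\det A_\eps = \eps\operatorname{Tr}\!\left(\operatorname{adj}(A_0)\,\d_\eps A_\eps|_{\eps=0}\right) + O(\eps^2)$, so $|\det A_\eps|\sim c_\gamma\eps$, $c_\gamma>0$, forces the linear coefficient to be nonzero; equivalently, factoring $\det A_\eps = \lambda(\eps)\prod_{j\ge2}\lambda_j(\eps)$ and using that the product stays away from $0$, one gets $\lambda(\eps) = \mu_1\eps + O(\eps^2)$ with $\mu_1\neq0$ and $c_\gamma = |\mu_1|\prod_{j\ge2}|\lambda_j(0)|$. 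In particular $\lambda(\eps)/\eps$ is smooth and nonvanishing on a possibly smaller $[0,\eps_0)$, $\lambda(\eps)$ is the eigenvalue of $A_\eps$ closest to $0$, and it has constant sign $\sigma := \operatorname{sgn}\mu_1$. Decomposing $A_\eps^{-1} = \lambda(\eps)^{-1} n_\eps n_\eps^{\top} + R_\eps$ with $\|R_\eps\| = O(1)$, the $(i_1,i_2)$ entry of $A_\eps^{-1}$ is $\lambda(\eps)^{-1}(n_\eps)_{i_1}(n_\eps)_{i_2} + O(1) = \sigma\,\wt h_{i_1}\wt h_{i_2} + O(1)$ with $\wt h_i := |\lambda(\eps)|^{-1/2}(n_\eps)_i$. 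Finally I would write $\wt h_i = (c_\gamma\eps)^{-1/2} h_i$ with
\[
h_i := \left(\frac{c_\gamma}{|\lambda(\eps)/\eps|}\right)^{1/2}(n_\eps)_i
\]
and check that $h_i$ — a square root of a smooth positive function times the smooth unit vector $n_\eps$ — is smooth and uniformly bounded on $[0,\eps_0)$, reducing at $\eps=0$, up to sign, to the $h_i$ of the first step. This is the asserted form of $\d^2\LL_\eps^{-1}$, with $\sigma$ as described.

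For the circular case I would argue that when $\Omega_0$ is a disc, $\LL$ is invariant under the rotation $S\mapsto S+t\mathbf1$, so $\mathbf1 = (1,\dots,1)$ lies in $\ker\d^2\LL$ identically; together with $\operatorname{rank}\d^2\LL = q-1$ this forces $\ker\d^2\LL = \R\mathbf1$. By the first step $\operatorname{adj}(\d^2\LL)$ is a nonzero symmetric rank-one matrix with range contained in — hence equal to — $\R\mathbf1$, so $\operatorname{adj}(\d^2\LL) = c\,\mathbf1\mathbf1^{\top}$ for some $c\neq0$; thus $h_{i_1i_2} = c$ and $h_{i_1} = h_{i_2} = \sqrt{|c|}$ for all $i_1,i_2$.

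The step I expect to be the main obstacle is the bookkeeping in the second paragraph: getting the $\eps^{-1/2}$-singular part of $\d^2\LL_\eps^{-1}$ to factor cleanly as $(c_\gamma\eps)^{-1/2}h_{i_1}\cdot(c_\gamma\eps)^{-1/2}h_{i_2}$ with the $h_i$ smooth and bounded \emph{down to} $\eps = 0$. This rests on two facts that come straight from the hypotheses — that $0$ is a \emph{simple} eigenvalue of $A_0$ (making $\lambda(\eps)$ and $n_\eps$ smooth in $\eps$) and that the linear-in-$\eps$ vanishing of $\det A_\eps$ is nondegenerate (keeping $\lambda(\eps)/\eps$ away from $0$) — after which the remaining estimates are routine spectral perturbation theory.
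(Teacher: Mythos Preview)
Your proof is correct. The main substantive difference from the paper is in the second step: the paper uses Cramer's rule, writing $\d^2\LL_\eps^{-1}=(\det\d^2\LL_\eps)^{-1}M_\eps$ with $M_\eps$ the adjugate, observing that $M_\eps$ is smooth and bounded in $\eps$ down to $\eps=0$ (its entries are polynomials in the entries of $\d^2\LL_\eps$), and then reading off $\d^2\LL_\eps^{-1}=(c_\gamma\eps)^{-1}M_0+O(1)$ with $M_0$ the rank-one adjugate already analyzed. You instead invoke simple-eigenvalue perturbation theory and decompose $A_\eps^{-1}=\lambda(\eps)^{-1}n_\eps n_\eps^\top+R_\eps$. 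Both routes are short; the adjugate approach avoids any appeal to spectral perturbation and gets smoothness for free, while your spectral decomposition makes the sign $\sigma$ and the factorization $h_{i_1}h_{i_2}$ completely transparent (they are literally the sign of $\lambda(\eps)$ and the components of the unit eigenvector), something the paper leaves implicit. For the circle, the paper shows $\langle M^i,w\rangle=0$ for $w\perp\mathbf1$ by an explicit determinant computation, whereas you note this follows immediately from $A_0\,\mathrm{adj}(A_0)=0$ as in your first step; your argument is cleaner here.
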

	
%	\begin{prop}\label{HA}
%		Let $\d^2 \LL_{\eps, \dt}^{-1}$ denote the inverse hessian of $\LL_{\eps, \dt}$ in $\gamma$-adapted action angle coordinates to order $3$. Then $\d^2 \LL_{\epsilon, \dt}^{-1}(S_\gamma)$ is asymptotic to $M(S_\gamma)$, where $M = H_{i_1, 1_2} = h_{1_1, i_2}$ has all entries equal to a smooth function on $\d \Omega^q$ consisting of minors of $\d^2 \LL_0$ and $V$ is the $(q-1)\times (q-1)$ determinant of $\d^2 \LL$ upon quotienting by the degenerate direction. Furthermore, $M = O(\|f_{\eps, \dt}\|_{C^2})$ and is uniformly bounded in $\eps, \dt$.
%	\end{prop}
	\begin{proof}
%		Notice that $\d_{s_i} \LL = \sin \theta_i$, which depends only on the tangent line to $\d \Omega_{\eps}$ at $x_i$, and hence the $1$-jet of a boundary defining function for $\d \Omega$. In particular, an $\epsilon$ perturbation of the curvature of $\d \Omega$ at a reflection point of $\d \gamma$ changes the Hessian $\d^2 \LL$ by at most $\eps$. More generally, an $\epsilon$ change in $k$-jet of $(\d \Omega_0)$ at points of $\d \gamma$ results in an $\epsilon$ change to $k$-jet of $\LL$. By Theorem \ref{perturbation} above, we may choose an $\eps$ perturbation of the curvature at $\d \gamma$ so that $\d^2 \LL$ becomes nondegenerate. By continuity, we have that $\det H \sim c_\gamma O(\eps)$.
%		\\
%		\\
		From Proposition \ref{NSP} above, we can choose a perturbation $\mu$ such that $\det \d^2 \LL \sim c_\gamma \eps$, $c_\gamma \neq 0$. Denote by $H = H_{\eps} = \d^2 \LL$ the Hessian of $\LL$ on $\d \Omega_{\eps}$. The adjugate matrix $M_{\eps}$ of $\d^2 \LL$ in $\Omega_{\eps}$ consists of complimentary minors depending on $\|\ka\|_{C^0(\d \Omega_\eps)}$. If the perturbation $\mu(\eps, \cdot)$ makes $\det \LL_{\eps}\sim c_\gamma \eps \neq 0$, then the inverse Hessians satisfy
		\begin{align*}
			\d^2\LL^{-1} \sim \frac{1}{c_\gamma \eps} M_\eps,
		\end{align*}
%		\begin{align*}
%			\d^2 \LL^{-1} = \frac{1}{c_\gamma \eps} M_\eps \sim \sum_{n = 0}^{(q-1)^2} M_n \eps^{n-1}. 
%		\end{align*}
		where the cofactor matrices are $C^\infty(\R^{q^2} \times [0, \eps_0])$ and uniformly bounded in $\eps$ all the way down to $\eps = 0$. In particular, $\d^2 \LL^{-1} \sim (c_\gamma \eps)^{-1} M_0 + O(1)$. Under the rank $q-1$ assumption, $M_0$ is rank $1$, which together with symmetry about the diagonal implies that $\d^2 \LL^{-1} \sim \sigma \cdot (c_\gamma \eps)^{-1} {h}_{i_1}  {h}_{i_2}$ as in the statement of the theorem.
		\\
		\\
		If $\Omega_0$ is a circle, then we have action angle variables in which the angles are just the usual arclength coordinates on $\d \Omega_0$ and the actions are the radii of concentric circles to which corresponding orbits are tangent. It is easy to see in this case that $\d^2 \LL_0$ is negative semi-definite with rank $q-1$ and has kernel $\R v$, for $v= (1, \cdots, 1)$ corresponding to rotation along a concentric circle. We claim that each row of $M_0$ is a multiple of $v$. Let $M^i$ denote the $i$th row of $M_0$ and choose any $w$ orthogonal to $v$. Then, then using the definition of $M$ in terms of complimentary minors, we have
		\begin{align*}
			\langle M^1, w \rangle = \det
			\begin{pmatrix}
				w_1 & w_2 & \cdots & w_q\\
				H_0^{21} & H_0^{22} & \cdots & H_0^{2q}\\
				\vdots & \hdots & \ddots & \vdots\\
				H_0^{q1} & H_0^{q2} & \hdots & H_0^{qq}
			\end{pmatrix},
		\end{align*}
		where $H_0^{ij}$ are the entries of $\d^2 \LL_0$. Since $w \perp \ker H_0 = \ker H_0^*$, $w \in \Span\{H_0^1, H_0^2, \cdots, H_0^q\} = \Span \{H_0^2, \cdots, H_0^q\}$, given that the rows sum to zero and are hence linearly dependent. Therefore, $\langle M^1, w \rangle = 0$. We have shown that $\Span(v)^\perp \subset \Span(M^1)^\perp$ which implies $\Span(M^1) = \Span(v)$ since they have equal dimensions; the same holds if one replaces $1$ by any $1 \leq i \leq q$. Each row of $M$ is a multiple of $v$ and by symmetry about the diagonal, the multiples must be the same. In particular,
		\begin{align*}
			\d^2 \LL^{-1} \sim \frac{\sigma}{c_\gamma \eps} h (1, \cdots, 1)^T (1, \cdots, 1) + O(1),
		\end{align*}
		for a single smooth function $h$ on $\Omega_0$. Since $\d^2 \LL$ has $q-1$ negative eigenvalues on the disk, $\sigma$ only depends on ellipticity or hyperbolicity of the corresponding perturbed orbit.
%		The $O(1)$ remainder is uniformly bounded in $\eps$ together with the $C^0$ norm of $\ka$.
%		Writing $\Theta(\delta) \sim \sum_{t = 1}^q \theta_t \delta^t$, we have in particular that
%		\begin{align*}
%			\d^2 \LL_{\eps, \dt}^{-1} \sim \frac{1}{\theta_1 \delta} h_0 + O(1) = \frac{1}{\eps} \widetilde{M} + O(1).
%		\end{align*}
%		The $O(1)$ remainder is uniformly bounded in $\eps$ and $\|\ka\|_{C^2(\d \Omega_0)}$.
%		
%		
%		
%			Some constraints must be placed on $\rho_\eps$ since $\text{Spec}: \d^2 \LL \to \C^q$ is smooth and the preimage $\det^{-1}(0)$ is a codimension $1$ submanifold. In particular, $\d \Omega_\eps$ must be perturbed so that $\d_\eps \d^2 \LL$ is transversal to this surface, which corresponds to a degree $2$ constraint on the jet of the perturbation. To match the lengths and destroy all other periodic orbits of this length, one can combine two arguments: $(1)$ use the $\beta$ function and its analyticity together with some diophantine conditions to get matching lengths. $(2)$ Use perturbation theory as in \cite{PeSt17} with genericity assumptions to kill other orbits of this length. One can also use such arguments to ensure nondegeneracy, which is a finite condition on the taylor coefficients of $\d \Omega$. See Illya's notes for derivation in case when $H_\eps$ is nondegenerate.
	\end{proof}

	\section{Stationary Phase and Feynman Diagrams}\label{stationary phase and feynman diagrams}
	We now apply the method of stationary phase to the integral appearing in Theorem \ref{SP1}, but on the perturbed domain $\Omega_{\eps}$ from Section \ref{PT}, so as to make the phase nondegenerate. In general, if $\Phi$ has a unique stationary point at $x_0$ with nondegenerate Hessian and $u$ is a compactly supported smooth function, the stationary phase formula in \cite{Ho90} reads:
	\begin{align}\label{Zk}
		\int_{\R^n} u(x) e^{i k \Phi(x)} dx \sim (2\pi/k)^{n/2}  \frac{e^{ik \Phi(x_0)} e^{i \pi \text{sgn} \d^2 \Phi(x_0)/4}}{ |\det \d^2 \Phi (x_0)|^{1/2} } \sum_{j = 0}^\infty k^{-j} L_j u(x_0),
	\end{align}
	where the $L_j$ are differential operators of order $2j$ having the form
	\begin{align}\label{Zk2}
		L_j u (x_0) = \sum_{\nu - \mu = j} \sum_{ 2\nu \geq 3 \mu} i^{-j} 2^{-\nu}  \langle \d^2 \Phi(x_0)^{-1} \d , \d \rangle^\nu (g^\mu u (x_0))/\mu! \nu!.
	\end{align}
	Here, $g = \Phi(x) - \Phi(x_0) - \Phi'(x_0) (x- x_0) - \Phi''(x_0) (x-x_0)^2$ is the higher order part of the Taylor expansion of $\Phi$, having polynomial order $\geq 3$. We in fact need a version of stationary phase which allows for dependence on auxilary parameters. If the phase $\Phi$ and amplitude $u$ depend smoothly on a parameter $\tau \in T$ with $u$ being supported in a compact set which is independent of $\tau$, then expansion \ref{Zk} is uniform in $\tau$ (see \cite{Du96}). There are several asymptotics to keep track of simultaneously: powers of $k$ and $\eps$ together with the number of boundary curvature derivatives.
	\\
	\\
%	In what follows, we will need to keep track of both orders of $\eps$ and the order of the curvature jet of $\d \Omega$ (cf. Definition \ref{CJ}).
	We use Feynman diagrams to organize the terms hierarchically, inspired by but using a different structure than those in \cite{Zel09}.
	
	\subsection{Feynman Calculus}
	We review the procedure first used by Feynman to evaluate asymptotic integrals. We will follow the conventions on notation as in \cite{axelrod}.
	\begin{def1}
		A graph $\mathcal{G}$ with $V$ vertices and $I$ edges is called a Feynman diagram of order $j$ if
		\begin{enumerate}
			\item $I - V = j$,
			\item there are $V$ closed (indistinguishable) vertices, each of which has valency at least $3$,
			\item there is one open, marked vertex with any possible valency, including $0$ (corresponding to an isolated vertex), and
			\item each edge is labeled by a function $\ell$, which assigns two indices $(i_1,i_2)$ corresponding two its two endpoints.
		\end{enumerate}
	\end{def1}
\begin{rema}
	The quantity $I - V = j$ is closely related to the Euler characteristic $\chi(\mathcal{G})$, but does not include faces; not all diagrams are assumed to be planar graphs. We will sometimes denote $\mathcal{G}$ by $\mathcal{G}_\ell$ to emphasize $\ell$, the edge labeling.
\end{rema}
	\begin{def1}
		Let $\mathcal{G}$ be a Feynman diagram of order $j$ with labeling $\ell$.
		\begin{itemize}
			\item For the open vertex, we associate the quantity
			$$\frac{\d^v u}{\d x_{i_1} \cdots \d x_{i_v}} (x_0),
			$$
			where $i_1, \cdots, i_v$ are the indices labeling the incident edges ($v$ is the valency of the vertex).
			\item To each closed vertex corresponds a factor of
			$$
			ik \frac{\d^v \Phi}{\d x_{i_1} \cdots \d x_{i_v}}(x_0),
			$$
			where again $i_1, \cdots, i_v$ are the indices of the incident edges.
			\item For each edge with indices $i_1, i_2$, we assign the inverse Hessian element
			$$
			\frac{i}{k} (\d^2 \Phi)_{i_1 i_2}^{-1} := \frac{i}{k} \wt{h}_{i_1 i_2}.
			$$
		\end{itemize}
	The Feynman amplitude $F_{\mathcal{G}, \ell}$ corresponding to $\mathcal{G}$ and label $\ell$ is given by the product of each factor above.
	\end{def1}
	
	\begin{prop}[\cite{axelrod}]\label{Feynman} 
		Let $Z_k$ denote the integral \ref{Zk}. As $k \to \infty$,
		\begin{align}
			Z_k \sim \left( \frac{2\pi}{k}\right)^{n/2} \frac{e^{i k \Phi(x_0)} e^{i \pi \text{sgn} \d^2 \Phi }}{|\det \d^2 \Phi|^{1/2}} \sum_{ I = 0}^\infty \sum_{V = 0}^\infty \sum_{\mathcal{G}, \ell} \frac{F_\mathcal{G}}{w(\mathcal{G})},
		\end{align}
		where the inner sum is over all labeled labeled Feynman diagrams with $V$ internal vertices, $1$ external vertex, and $I$ edges. $\omega(\mathcal{G})$ is the order of the automorphism group of $\mathcal{G}$, $F_{\mathcal{G}, \ell}$ is the Feynman amplitude and $\ell$ assigns to each endpoint of each edge an index in $\{1, \cdots, n\}$.
	\end{prop}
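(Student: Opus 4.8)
The plan is to obtain the diagrammatic expansion by unpacking the operators $L_j$ in the classical stationary phase formula \ref{Zk}--\ref{Zk2} and reorganizing the resulting sum over isomorphism classes of graphs; this is the standard perturbative (``Feynman diagram'') derivation, which we take from \cite{axelrod}. After a translation we may assume $x_0 = 0$ and write $\Phi(x) = \Phi(0) + \tfrac12 \langle \d^2\Phi(0) x, x\rangle + g(x)$ with $g$ vanishing to order $3$, so that by \ref{Zk2} the integral $Z_k$ equals $(2\pi/k)^{n/2}|\det \d^2\Phi(0)|^{-1/2} e^{ik\Phi(0)} e^{i\pi\,\text{sgn}\,\d^2\Phi(0)/4}$ times $\sum_j k^{-j} L_j u(0)$, where $L_j u(0) = \sum_{\nu - \mu = j,\ 2\nu\geq 3\mu} i^{-j} 2^{-\nu}(\mu!\,\nu!)^{-1} \langle \d^2\Phi(0)^{-1}\d,\d\rangle^\nu (g^\mu u)(0)$.

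First I would expand into monomials. Writing $g(x) = \sum_{v\geq 3} \tfrac{1}{v!}\sum_{i_1,\dots,i_v} \d^v_{i_1\cdots i_v}\Phi(0)\, x_{i_1}\cdots x_{i_v}$, the power $g^\mu$ becomes a sum over choices of one monomial for each of the $\mu$ temporarily labeled factors, and $u$ is replaced by its Taylor series. The operator $\langle \d^2\Phi(0)^{-1}\d,\d\rangle^\nu = \big(\sum_{a,b}(\d^2\Phi(0))^{-1}_{ab}\d_a\d_b\big)^\nu$ applied to a monomial of degree $2\nu$ and evaluated at $0$ equals, by the Leibniz rule, a sum over all perfect matchings of the $2\nu$ variable slots, each matched pair $\{a,b\}$ contributing a factor $(\d^2\Phi(0))^{-1}_{i_a i_b}$; a direct count shows the number of ordered realizations of a fixed matching is $\nu!\,2^\nu$, which cancels the prefactor $2^{-\nu}(\nu!)^{-1}$ exactly. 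This is precisely Wick's theorem for moments of a Gaussian with covariance $(ik\,\d^2\Phi(0))^{-1}$, and only the degree-$2\nu$ part of $g^\mu u$ survives, so the constraint $2\nu\geq 3\mu$ records that each factor of $g$ must absorb at least three slots.

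Next I would set up the dictionary of the definitions preceding the statement. Each of the $\mu$ factors of $g$ becomes a closed vertex whose incident half-edges are its variable slots; the order-$\geq 3$ vanishing forces valency $\geq 3$, and $\tfrac1{v!}\d^v\Phi(0)$ together with a factor $ik$ per vertex (so that, after reindexing, $\nu=I$ edges and $\mu=V$ closed vertices with $I-V=j$ produce the power $(ik)^V(i/k)^I = i^{I+V}k^{-j}$) gives the vertex amplitude $ik\,\d^v\Phi(0)$, the $1/\mu!$ being absorbed below. The Taylor series of $u$ supplies the single open vertex carrying $\d^v u(0)$, and each Wick pair supplies an edge carrying $\tfrac{i}{k}(\d^2\Phi(0))^{-1}$; the remaining powers of $i$ are matched with the $i^{-j}$ of \ref{Zk2} by the same sign convention used in the definition of the edge factor. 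Finally, the group $S_\mu \times (S_2 \wr S_\nu)$ acting on the data (labeling of the $g$-factors, ordering of the $\nu$ pairs, ordering within each pair) has orbits in bijection with the realizations of a fixed labeled graph $\mathcal{G}_\ell$, with stabilizer equal to $\mathrm{Aut}(\mathcal{G}_\ell)$; orbit–stabilizer then turns the total combinatorial prefactor $2^{\nu}\mu!\,\nu!$ into division by $w(\mathcal{G}) = |\mathrm{Aut}(\mathcal{G})|$. Summing over $(\nu,\mu)$ with $\nu - \mu = j$, then over $j$, and reindexing by $(I,V)$ gives the triple sum in the statement; uniformity in auxiliary parameters follows from that of \ref{Zk}.

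The main obstacle is exactly this last combinatorial identity: verifying that the elementary symmetry factors $2^{-\nu}(\mu!\,\nu!)^{-1}$ in H\"ormander's $L_j$ reassemble, after the Wick expansion, into $1/|\mathrm{Aut}(\mathcal{G})|$, which requires carefully tracking which permutations in $S_\mu \times (S_2\wr S_\nu)$ act freely on the realizations of $\mathcal{G}_\ell$ --- graphs with nontrivial symmetry, such as those with multiple edges or self-loops at a vertex, are precisely where the automorphism factor is nontrivial, and these are the cases that must be checked with care. Everything else --- matching the notation of the definitions and bookkeeping the powers of $i$ and $k$ --- is routine.
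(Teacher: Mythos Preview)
The paper does not give its own proof of this proposition; it simply states ``We refer to \cite{axelrod} for an elementary proof of the equivalence of \ref{Zk} and Proposition \ref{Feynman}.'' Your proposal follows exactly the standard derivation one finds in that reference --- expanding H\"ormander's $L_j$ via Wick's theorem and regrouping by isomorphism classes of graphs using orbit--stabilizer --- so it is correct and matches the cited approach.
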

	We refer to \cite{axelrod} for an elementary proof of the equivalence of \ref{Zk} and Proposition \ref{Feynman}. It remains evaluate each of these Feynman amplitudes in coordinates to determine their contributions to $B_{\gamma, j}$.

\subsection{Contributing Graphs}
Here, we examine the Feynman diagrams which contribute maximal inverse powers of $\eps$ to the trace expansion in Proposition \ref{SP1}. It is clear that the amplitudes $a_0^\sigma(k+i \tau, S ; \eps)$ are uniformly bounded in $\epsilon$. The main contributions come from the closed vertices (derivatives of the phase) and their incident edges (inverse Hessians). To keep track of leading order terms in $\epsilon$, we must isolate those Feynman diagrams which produce the most inverse Hessians, each of which produces a power of $\eps^{-1}$.

\begin{lemm}\label{F3}
	For each $j$, the Feynman amplitudes having maximal products of inverse Hessian elements correspond to the class of $3$-regular graphs on $2j$ closed vertices with one additional open vertex having valency zero.
\end{lemm}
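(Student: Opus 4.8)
The plan is to perform a counting argument on the Feynman diagrams in Proposition~\ref{Feynman}, tracking the power of $\eps^{-1}$ carried by each labeled diagram and optimizing over the combinatorial type. First I would recall from Proposition~\ref{HA} that each inverse Hessian element $\wt h_{i_1 i_2}$ is of size $O(\eps^{-1})$ (more precisely $\wt h_i = (c_\gamma\eps)^{-1/2} h_i$ with $h_i$ bounded), while every vertex factor — a derivative of the phase $\LL$ of order $\geq 3$ at the closed vertices, or a derivative of the amplitude $a_0^\sigma$ at the open vertex — is $O(1)$ in $\eps$ by condition~(7) on $\Omega_\eps$ and the uniform boundedness of the amplitudes. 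Consequently a diagram $\mathcal{G}$ with $I$ edges contributes a term of size $O(\eps^{-I})$, and the $\eps$-leading contribution to $B_{\gamma,j}$ comes from diagrams maximizing $I$ among order-$j$ diagrams, i.e.\ those with $I - V = j$ and $V$ as large as possible.

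Next I would set up the optimization. Write $v_a$ for the valency of the open vertex and $v_1,\dots,v_V$ for the valencies of the closed vertices; handshake gives $2I = v_a + \sum_{r=1}^V v_r$. Since each closed vertex has $v_r \geq 3$ and the open vertex has $v_a \geq 0$, we get $2I \geq 3V$, hence $I \geq 3V/2$, i.e.\ $j = I - V \geq V/2$, so $V \leq 2j$ and therefore $I = V + j \leq 3j$. The maximum $I = 3j$ is attained exactly when $V = 2j$, and then the handshake inequality $2I = 6j \geq v_a + 3V = v_a + 6j$ forces $v_a = 0$ and $v_r = 3$ for every closed vertex — that is, $\mathcal{G}$ is a $3$-regular graph on $2j$ closed vertices together with an isolated open vertex. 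This is precisely the class asserted in Lemma~\ref{F3}. I would also note the edge case $j = 0$: then $V = 0$, $I = 0$, the only diagram is the single isolated open vertex, consistent with $w(0) = 1$ and $\mathcal{R}_0 = 0$.

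The one point requiring a little care — and the step I expect to be the main obstacle — is verifying that the maximal diagrams actually produce a \emph{nonzero} contribution at order $\eps^{-3j}$, rather than having the leading term cancel, leaving an effectively smaller power. This is where the positivity statement $w(j) = \sum_{\mathcal{G}} 1/w(\mathcal{G}) > 0$ and the explicit computation of the Feynman amplitude for a $3$-regular graph come in: for such a graph every vertex factor is the \emph{same} third derivative $ik\,\d^3_{i_1 i_2 i_3}\LL$ contracted against the rank-one inverse Hessian $\sigma(c_\gamma\eps)^{-1}h_{i_1}h_{i_2}$, so summing over all labelings $\ell$ factors the $i$-index sums and collapses each amplitude to a common scalar $\bigl(\sum_{i_1,i_2,i_3} h_{i_1}h_{i_2}h_{i_3}\,\d^3_{i_1 i_2 i_3}\LL\bigr)^{2j}(c_\gamma\eps)^{-3j}$ times a sign and phase, with the graph-dependence reduced purely to $1/w(\mathcal{G})$. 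I would defer the full evaluation of this amplitude (and the precise bookkeeping of the phase $e^{-i\pi q/4}$, the factor $L_\gamma/2q$, and the prefactor $\prod_\ell \cos\theta_\ell/|x(s_\ell)-x(s_{\ell+1})|$ via Remark~\ref{PCR}) to the subsequent proof of Theorem~\ref{main}; for Lemma~\ref{F3} itself it suffices to record that the power of $\eps^{-1}$ is maximized exactly by the $3$-regular class, which is the counting argument above.
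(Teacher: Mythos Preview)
Your argument is correct and essentially identical to the paper's: both use the handshake identity $2I = v_a + \sum_r v_r$ together with $v_r \geq 3$ to get $I \leq 3j$, with equality forcing $V=2j$, all $v_r=3$, and $v_a=0$. Your derivation that $v_a=0$ directly from the saturated handshake is in fact slightly cleaner than the paper's (which removes the open vertex and argues by contradiction), and your third paragraph on nonvanishing properly belongs to Corollary~\ref{FAmp} and the proof of Theorem~\ref{main} rather than to this lemma.
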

\begin{proof}
	In the usual non-diagrammatic stationary phase notation, maximal inverse Hessians arise from the terms where $\nu - \mu = j$ and $2 \nu = 3\mu$, which yields $\nu = 2j$ and $\mu = 3j$. In Proposition \ref{Feynman}, $V - I = j$ and $\mathcal{G}$ has $I = 3j$ edges. For a general graph,
	 $$
	 I = \frac{u_0}{2} + \frac{1}{2} \sum_{i = 1}^V v_i,
	 $$
	 where $u_0$ is the valency of the open vertex and $v_i$ are the valencies of unmarked vertices. As each $v_i \geq 3$ by assumption, we have
	 \begin{align*}
	 	I - \frac{3}{2} V\geq& 0,\\
	 	- \frac{3}{2} I + \frac{3}{2} V =& - \frac{3}{2} j,
	 \end{align*}
	 from which it follows that $I \leq 3j$, which is the maximal number of inverse Hessians. This is acheived when $V = 2j$ and $I = 3j$, corresponding to each $v_i = 3$. If the open vertex were not isolated, consider the graph $\mathring{\mathcal{G}} = \mathcal{G} \backslash \{\text{open vertex and all its incident edges}\}$. The formula above would then force there to be vertices with valency strictly less than $3$.
\end{proof}

\begin{figure}
	\includegraphics[scale = 0.35]{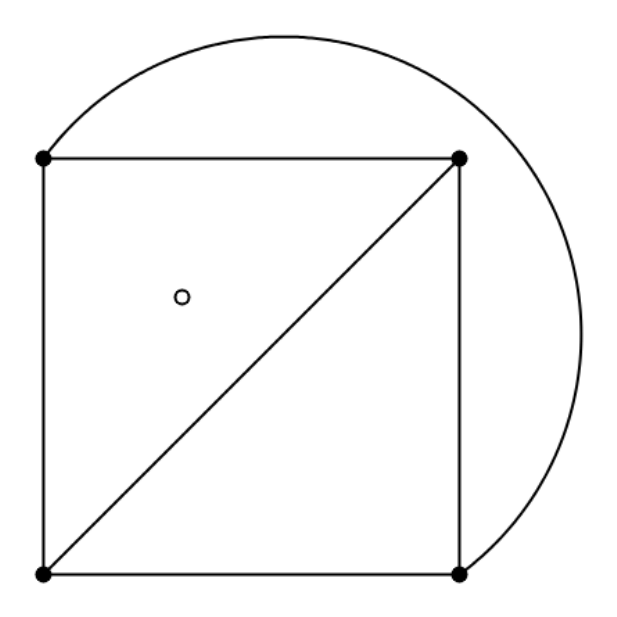}
	\includegraphics[scale = 0.35]{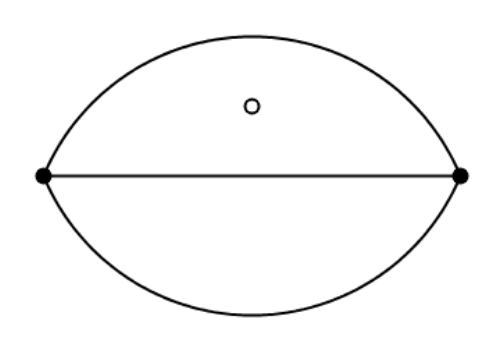}
	\includegraphics[scale = 0.35]{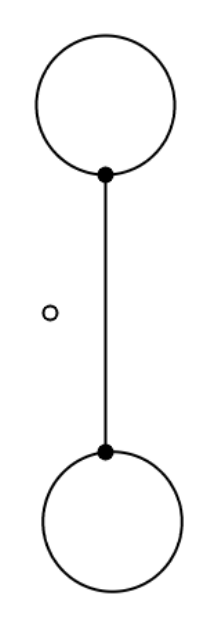}\\
	\includegraphics[scale = 0.35]{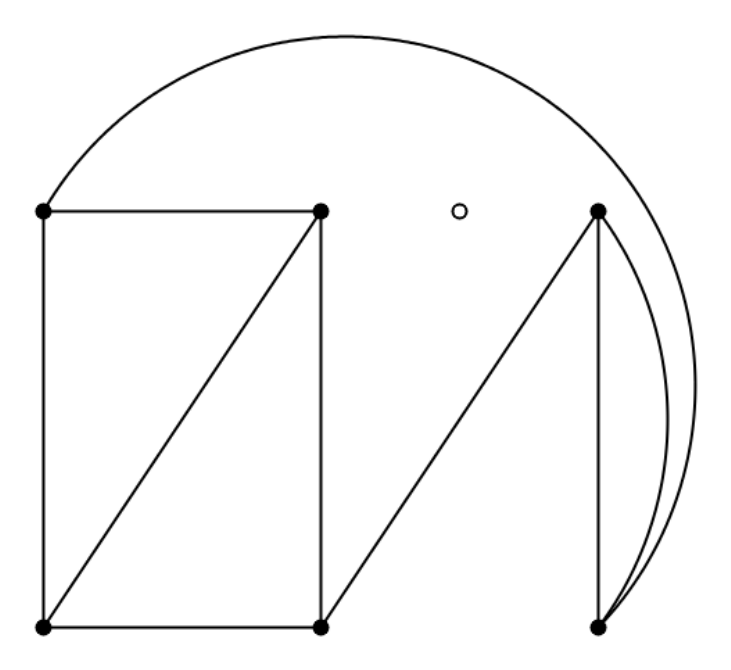}
	\includegraphics[scale = 0.35]{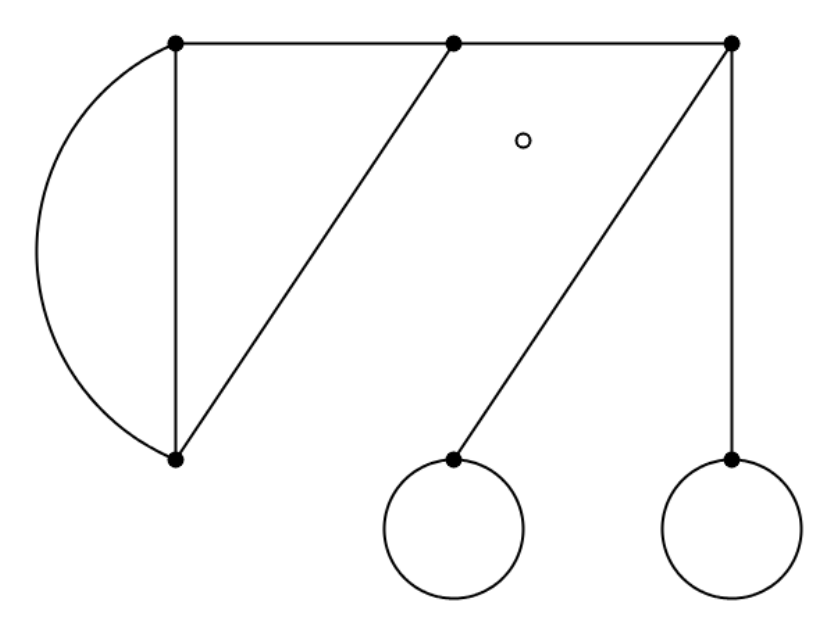}
	\label{FeynDiag}
	\caption{Some admissible Feynman diagrams given by $3$ regular graphs on $2j$ vertices.}
\end{figure}

\begin{coro}\label{FAmp} Modulo an error bounded uniformly in $\eps$, the Feynman amplitudes for contributing graphs with maximal Hessians consist of $a_0(S_\gamma) (\pm i)^j k^{-j}$ and products of terms of the form
	\begin{align*}
		(c_\gamma \eps)^{-\frac{3}{2}} h_{i_1} h_{i_2} h_{i_3} \sum_{i_1, i_2, i_3} \d_{i_1 i_2 i_3}^3 \LL,
	\end{align*}
for some indices $1 \leq i_1, i_2, i_3 \leq q$ and $a_0$ the amplitude defined in Proposition \ref{SP1}.
\end{coro}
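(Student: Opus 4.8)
The plan is to read the structure of each Feynman amplitude $F_{\mathcal{G}}$ directly off the Feynman calculus of the preceding subsection, using Lemma \ref{F3} to restrict attention to the relevant graphs, and then to substitute the leading-order-in-$\eps$ expansion of the inverse Hessian supplied by Proposition \ref{HA}. By Lemma \ref{F3}, the graphs in question are exactly the $3$-regular graphs $\mathcal{G}$ on $2j$ closed vertices together with one open vertex of valency $0$, so $\mathcal{G}$ has $3j$ edges. Reading off $F_{\mathcal{G},\ell}$: the isolated open vertex contributes $u(S_\gamma)=a_0^\sigma(k+i\tau,S_\gamma)$ (no derivatives, its valency being $0$); each of the $2j$ trivalent closed vertices contributes $ik\,\d^3_{i_1i_2i_3}\LL(S_\gamma)$, where $\LL$ is the phase of the oscillatory integral in Proposition \ref{SP1} and $i_1,i_2,i_3$ label the three edge-ends at that vertex; and each of the $3j$ edges contributes $\tfrac{i}{k}\wt{h}_{i_1i_2}$. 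Since $a_0^\sigma(k+i\tau,S;\eps)$ is smooth and uniformly bounded in $\eps$, I would replace $a_0^\sigma(\cdot,S_\gamma)$ by its value $a_0(S_\gamma)$ at $\eps=0$ up to an error bounded uniformly in $\eps$, so that all of the $\eps$-divergence sits in the remaining factors.

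Next I would do the bookkeeping of constants. The $2j$ closed vertices produce $(ik)^{2j}$ and the $3j$ edges produce $(i/k)^{3j}$, so the net power of $k$ is $k^{-j}$ and the net power of $i$ is $i^{5j}=i^{j}$. Invoking Proposition \ref{HA} to write $\wt{h}_{i_1i_2}=\sigma\,(c_\gamma\eps)^{-1}h_{i_1}h_{i_2}+O(1)$, with $\sigma=\pm1$ the sign of the eigenvalue of $\d^2\LL$ closest to $0$, the leading term in each of the $3j$ edge factors contributes $\sigma^{3j}(c_\gamma\eps)^{-3j}=\sigma^{j}(c_\gamma\eps)^{-3j}$ (using $\sigma^2=1$) times $\prod_{\text{edges}}h_{i_1}h_{i_2}$; combining the accumulated phases gives $i^{j}\sigma^{j}=(\pm i)^{j}$, with precisely the sign convention of Theorem \ref{main}. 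Hence $F_{\mathcal{G}}$ acquires the prefactor $a_0(S_\gamma)(\pm i)^{j}k^{-j}$.

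It then remains to recognize that the leftover product of the $2j$ derivatives $\d^3\LL$ and the $6j$ functions $h$, summed over all labelings $\ell$, is a $2j$-th power. The point is that each edge factor $h_{i_1}h_{i_2}$ splits into one $h$ attached to each endpoint of the edge, and each closed vertex meets exactly three edge-ends, so summing over all labelings (equivalently, over an index in $\{1,\dots,q\}$ for every edge-end) factorizes over the closed vertices:
\[
\sum_{\ell}\ \prod_{v=1}^{2j}\Big(h_{a_v}h_{b_v}h_{c_v}\,\d^3_{a_v b_v c_v}\LL\Big)\;=\;\Big(\sum_{i_1,i_2,i_3=1}^{q}h_{i_1}h_{i_2}h_{i_3}\,\d^3_{i_1 i_2 i_3}\LL\Big)^{2j},
\]
where $(a_v,b_v,c_v)$ are the edge-end indices at vertex $v$. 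Redistributing $(c_\gamma\eps)^{-3j}=\big((c_\gamma\eps)^{-3/2}\big)^{2j}$ across the $2j$ factors then writes $F_{\mathcal{G}}$ as $a_0(S_\gamma)(\pm i)^{j}k^{-j}$ times $2j$ copies of $(c_\gamma\eps)^{-3/2}\sum_{i_1,i_2,i_3}h_{i_1}h_{i_2}h_{i_3}\,\d^3_{i_1 i_2 i_3}\LL$, which is the assertion. Self-loops and repeated edges need no special treatment here, since the edge-end indices are summed freely regardless; their only effect is on the automorphism weight $w(\mathcal{G})$, which enters through Proposition \ref{Feynman} and not through $F_{\mathcal{G}}$. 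Finally, dropping the $O(1)$ part of each $\wt{h}_{i_1i_2}$ discards terms with at least one such factor, each losing a power of $(c_\gamma\eps)^{-1}$ and hence of size $O(\eps^{-3j+1})$, consistent with the remainder $\mathcal{R}_j$ of Theorem \ref{main}.

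I expect the main obstacle to be essentially combinatorial bookkeeping: making the factorization over vertices airtight in the presence of self-loops and multiple edges, confirming that the accumulated powers of $i$ and $\sigma$ collapse to precisely $(\pm i)^{j}$ with the sign dictated by Proposition \ref{HA} rather than its negative, and verifying that the $\eps$-dependence of $a_0^\sigma$ is genuinely subleading so that it may be frozen at $\eps=0$.
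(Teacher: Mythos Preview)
Your proposal is correct and follows essentially the same approach as the paper. The paper states Corollary \ref{FAmp} without a separate proof; the computation you carry out (reading off the Feynman rules for the $3$-regular graphs of Lemma \ref{F3}, counting the powers $i^{5j}k^{-j}$, inserting $\wt h_{i_1i_2}=\sigma(c_\gamma\eps)^{-1}h_{i_1}h_{i_2}+O(1)$ from Proposition \ref{HA}, and using the half-edge factorization to obtain the $2j$-th power) is exactly what the paper does later, inside the proof of Theorem \ref{main}.
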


\subsection{Critical points on the diagonal}

The integral in Theorem \ref{SP1} has two types of critical points. There are those on the large diagonals $x_i = x_{i + 1}$ and regular critical points which are off diagonal.

\begin{lemm}\label{lower order} [\cite{Zel09}]
	The amplitude $A_\sigma$ of $\n_\sigma$ is a semiclassical amplitude of order $-|\sigma|$.
\end{lemm}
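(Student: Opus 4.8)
The plan is to deduce this from the composition calculus for the two operator classes into which $\n$ has been split, following \cite{Zel09}. Recall the microlocal decomposition $\n = \n_0 + \n_1$ of \cite{HaZe}, in which $\n_0 \in \Psi^{-1}(\d\Omega)$ is a semiclassical pseudodifferential operator whose canonical relation is the diagonal $\Delta_{\d\Omega}$, while $\n_1 \in I_{1/k}^0(\d\Omega \times \d\Omega; \Lambda_\beta)$ is a semiclassical Fourier integral operator of order $0$ quantizing $\beta$; the cutoff at scale $k^{-\varrho}$ with $\varrho \in (0,\tfrac12)$ in Proposition \ref{Hankel} is precisely what makes this splitting, and the compositions below, legitimate. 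Writing $\n_\sigma = \n_{\sigma(1)} \cdots \n_{\sigma(M)}$ and composing the factors one at a time, the lemma will follow from two facts: that the canonical relations compose cleanly, and that semiclassical orders add under composition.

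For the first point I would argue that $\Delta_{\d\Omega} \circ \Lambda_{\beta^a} = \Lambda_{\beta^a}$ and $\Lambda_{\beta^a} \circ \Lambda_\beta = \Lambda_{\beta^{a+1}}$, the latter transversally because the twist condition $\d\beta/\d\theta > 0$ makes $\Lambda_\beta$ a twisted graph, so that after $M$ compositions $\n_\sigma$ is a semiclassical FIO associated to $\Lambda_{\beta^{M - |\sigma|}}$ — consistent with the count $M - |\sigma| = rq$ used earlier for contributions near $rL_\gamma$. For the order, each $\n_1$ contributes $0$ and each $\n_0$ contributes $-1$, so $\n_\sigma$ has semiclassical order $-|\sigma|$. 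To identify this with the amplitude $A_\sigma$ of Proposition \ref{SP1}, I would realize the Schwartz kernel of $\n_\sigma$ as an oscillatory integral over the intermediate reflection points and integrate out the $|\sigma|$ variables $s_i$ with $\sigma(i) = 0$: on the support of $\chi_i(\Re(\lambda)^\varrho |x(s_{i-1}) - x(s_i)|)$ one sits in an $O(k^{-\varrho})$ neighborhood of the diagonal where the $\Psi$DO calculus applies, and each such integration, combined with the order $-1$ symbol of $\n_0$, lowers the order of the surviving amplitude by one; accumulating the $|\sigma|$ drops yields $A_\sigma$ of semiclassical order $-|\sigma|$. Equivalently, one checks that $a_0^\sigma$ integrates from $S_\varrho^{M/2}(\d\Omega^M)$ down to $S_\varrho^{(M-3|\sigma|)/2}(\d\Omega^{M-|\sigma|})$, which relative to the principal $|\sigma|=0$ term with the same number $M-|\sigma|$ of genuine reflections is exactly an order $-|\sigma|$ amplitude.

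The main obstacle, and the only genuinely delicate point, is the dual nature of $\n$: it is a homogeneous pseudodifferential operator near the diagonal but a semiclassical Fourier integral operator away from it, so no single calculus can be applied globally and the parameter $\varrho \in (0,\tfrac12)$ must be tracked carefully to keep the $\n_0$-variables inside the region where they can be integrated out pseudodifferentially while the $\n_1$-variables retain the FIO structure quantizing $\beta$. One also needs the compositions to be clean, which is where the microlocalization $\chi_{\d\gamma}$ near $\gamma$ enters, since it keeps us away from the glancing directions $\sigma \in \{0,\pi\}$ at which the graph of $\beta$ folds. These are exactly the technical verifications carried out in \cite{Zel09} and \cite{HaZe}, so in the write-up I would state the lemma, give the composition-of-orders argument in full, and cite those works for the clean-composition bookkeeping rather than reproduce it.
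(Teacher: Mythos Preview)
The paper does not supply its own proof of this lemma; it simply states the result with the citation [\cite{Zel09}] and uses it. Your proposal is a faithful outline of the argument from \cite{Zel09} and \cite{HaZe} that the paper is invoking --- the $\n_0/\n_1$ decomposition, clean composition of canonical relations, additivity of semiclassical orders, and the integration-out of the $|\sigma|$ near-diagonal variables --- and it matches the surrounding discussion in the paper (in particular the remarks after Proposition~\ref{SP1} and the reduction $a_0^\sigma \in S_\varrho^{M/2}(\d\Omega^M) \mapsto S_\varrho^{(M-3|\sigma|)/2}(\d\Omega^{M-|\sigma|})$).
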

Hence, the critical points on the large diagonal have lower order and we can focus on the portion of $N$ which quantizes the $M$-fold billiard map on $S_+^* \d \Omega$.

\begin{coro}\label{LO}
	The coefficient of $k^{j}$ in the asymptotic expansion of the integral coming from $\n_\sigma$ in Theorem \ref{SP1} is $O(\epsilon^{-3j + |\sigma|})$.
\end{coro}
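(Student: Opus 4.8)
The plan is to combine the Feynman-diagrammatic stationary phase expansion of Proposition \ref{Feynman} with the power counting from Lemma \ref{F3} and the Hessian analysis of Proposition \ref{HA}, and then feed in the extra order reduction that comes from the $|\sigma|$ factors of $\n_0$ via Lemma \ref{lower order}. First I would apply Proposition \ref{Feynman} to the oscillatory integral in Proposition \ref{SP1} associated to $\n_\sigma$: the phase is $\LL(S)$, the critical point is $S_\gamma$, and by Proposition \ref{HA} each entry of $(\d^2 \LL)^{-1}$ in $\Omega_\eps$ is of size $O(\eps^{-1})$ (being $\sigma\cdot \wt h_{i_1}\wt h_{i_2} + O(1)$ with $\wt h_i = (c_\gamma\eps)^{-1/2} h_i$). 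Thus each edge of a Feynman diagram contributes a factor of size $\eps^{-1}$ (and a factor $k^{-1}$), while the closed vertices and the amplitude are uniformly bounded in $\eps$ — this last point uses condition (7) on the deformation family and the fact, noted before Lemma \ref{F3}, that $a_0^\sigma$ is uniformly bounded. A diagram of order $j$ (in the sense $I - V = j$) therefore contributes $\eps^{-I} k^{-j}$ up to $\eps$-bounded factors, since the overall power of $k$ in the $j$-th term of \ref{Zk} is $k^{-j}$ regardless of which diagram is used.

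Next I would invoke Lemma \ref{F3}: among diagrams of order $j$, the number of edges $I$ is maximized at $I = 3j$, attained precisely by $3$-regular graphs on $2j$ closed vertices with an isolated open vertex; all other order-$j$ diagrams have strictly fewer edges. Hence for a nondegenerate (i.e. $\eps$-independent Hessian) phase the worst term at order $k^{-j}$ would be $O(\eps^{-3j})$. This already gives the exponent $-3j$; the refinement to $-3j + |\sigma|$ comes from Lemma \ref{lower order}, which says the amplitude $A_\sigma$ of $\n_\sigma$ is a semiclassical symbol of order $-|\sigma|$ rather than $0$. Concretely, integrating out the $|\sigma|$ diagonal singularities produces a Fourier integral kernel quantizing $\beta^{M-|\sigma|}$ whose symbol lies in $S_\varrho^{(M-3|\sigma|)/2}$ (as recalled after Proposition \ref{SP1}, citing Theorem 3.8 of \cite{Zel09}); equivalently, $|\sigma|$ of the "links" in the diagram are collapsed, removing $|\sigma|$ independent boundary integrations and hence $|\sigma|$ powers of the large-parameter gain, so that the coefficient of $k^{j}$ picks up only $3(j-|\sigma|) + (\text{bounded}) = -3j + |\sigma|$ inverse powers of $\eps$ at worst. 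I would phrase this as: the contribution of $\n_\sigma$ is that of the $|\sigma|=0$ case applied to the reduced phase in $M - |\sigma|$ effective variables, and since the Hessian of the reduced phase still degenerates only to first order in $\eps$ (the rank-$(q-1)$ hypothesis and $|\det \d^2\LL| \sim c_\gamma\eps$ are unaffected by integrating out diagonal contributions), the same edge count gives at most $\eps^{-3(j - |\sigma|)} \cdot \eps^{-0}$... — here I must be careful: the cleanest route is to say each of the $|\sigma|$ factors of $\n_0 \in \Psi^{-1}$ lowers the symbol order by one, which in the stationary phase expansion translates to one fewer factor of $k$ available to be traded against the $\eps$-small Hessian, yielding the stated improvement by $|\sigma|$.

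The main obstacle I anticipate is making the bookkeeping in the last step rigorous: one must show that the order reduction coming from the $\n_0$ factors genuinely improves the $\eps$-power and not merely the $k$-power, i.e. that the diagonal integrations which are "used up" by the $\chi_i$ cutoffs are exactly the ones that would otherwise have supplied inverse Hessians. This requires checking that the reduced oscillatory integral still has its critical point at (the projection of) $S_\gamma$, still has Hessian with the same degeneracy structure (rank deficiency $1$, determinant $\sim c_\gamma \eps$), and that the amplitude obtained after the diagonal reduction remains uniformly bounded in $\eps$ and depends on the same curvature jet — all of which should follow from the explicit form of $A_\sigma$, Lemma \ref{lower order}, and the fact that the deformation $\mu$ fixes the reflection points and angles of $\gamma$, so that only the curvatures (entering $\delta_i$ via \ref{Hessian1}) vary. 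Once this is in place, combining the three ingredients — edge count $\le 3j$, size $\eps^{-1}$ per edge, order gain $|\sigma|$ from the $\n_0$'s — yields the claimed bound $O(\eps^{-3j + |\sigma|})$.
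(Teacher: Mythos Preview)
Your approach is correct and follows the same route as the paper, but you are making the last step harder than it needs to be. The paper's proof is two sentences: by Lemma \ref{lower order} the amplitude $A_\sigma$ has semiclassical order $-|\sigma|$, so the expansion of $\Tr \rho' \ast \n_\sigma(k)$ in inverse powers of $k$ begins at $k^{-|\sigma|}$; hence the coefficient of $k^{-j}$ is produced by the operator $L_{j-|\sigma|}$ in \eqref{Zk}, and Lemma \ref{F3} (applied with $j$ replaced by $j-|\sigma|$) bounds the number of inverse Hessian factors there by $3(j-|\sigma|)$. That is the entire argument.

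The ``main obstacle'' you identify --- that the $\n_0$ factors must improve the $\eps$-power and not merely the $k$-power --- dissolves once stated this way: the only source of negative $\eps$-powers is the inverse Hessian (Proposition \ref{HA}), and Lemma \ref{F3} already counts inverse Hessians purely in terms of which stationary-phase operator $L_m$ one is computing. So the conversion of a $k$-order drop into an $\eps$-power gain is automatic from the structure of \eqref{Zk}--\eqref{Zk2}; there is no need to track the reduced integral, check its Hessian degeneracy separately, or worry about which diagonal integrations are ``used up.'' Note also that this argument actually yields the stronger bound $O(\eps^{-3(j-|\sigma|)}) = O(\eps^{-3j+3|\sigma|})$, which you in fact wrote down at one point before retreating to the weaker stated exponent $-3j+|\sigma|$; the corollary as stated follows a fortiori.
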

\begin{proof}
	By Lemma \ref{lower order}, $\Tr \rho' \ast \n_\sigma(k)$ has an asymptotic expansion in negative powers of $k$ begining with $k^{- |\sigma|}$. Hence, the coefficient of $k^{-j}$ corresponds to the $j - |\sigma|$ term in the usual stationary phase expansion \ref{Zk} and the maximal contribution of inverse Hessians to such a term is $3( j - |\sigma|) \leq 3j$.
\end{proof}

When $|\sigma| > 0$, these terms are lower order in $1/\epsilon$ and can be included in the remainder so it suffices to only consider the contributions of regular critical points in the limit $\epsilon \to 0$. Notice also that the $A_\sigma'$ term in the amplitude $a_0$ appearing in Theorem \ref{SP1} is a semiclassical symbol of order $-1 - |\sigma|$ and can be discarded from the leading order asymptotics in $k$ by the same reasoning as in Corollary \ref{LO}.

%\red{Need to add in $k^{-\tau}$ in a bunch of places.}

\subsection{Proof of Theorem \ref{main}}
We can now complete the derivation of the regularized resolvent trace formula appearing in Theorem \ref{main}.

\begin{proof}
	We will analyze the expression
		\begin{align*}
		\mathcal{D}_{\gamma}(\lambda)   \sum_{j = 0}^\infty B_{\gamma, j} \Re(\lambda)^{-j} \sim^* \frac{1}{2} \sum_{M = 1}^\infty \sum_{\sigma: \Z_M \to \{0,1\}} \frac{e^{ -M \pi i / 4}}{M} \int_{{\d \Omega}_S^{M}}  e^{i \lambda \LL(S)} a_0^\sigma(\lambda, S) dS
%		\sim \left(\frac{2\pi}{k}\right)^{q/2} \frac{e^{i \pi \sgn \d^2 \LL(S_\gamma)|}}{|\det \d^2 \LL(S_\gamma)} a_0(S_\gamma) \left(i k \d^3 \LL \right) \left(\frac{i}{k} h^{i_1 i_2}\right) k^{-j} + \mathcal{R}(\eps, \dt, k)\\
%		\sum_{\gamma} \frac{e^{i \pi \mu_\gamma  /4}}{2 q} 
	\end{align*}
	in Proposition \ref{SP1} by first expanding the amplitude $a_0^\sigma$ in inverse powers of $k$ from the representation \ref{Hankel} and then replacing each term by its stationary phase expansion. Recall that
	\begin{align*}
		a_0^\sigma (k +i \tau, S) = \hat{\rho}(\LL(S)) \left(\LL(S)  A_\sigma (k + i \tau, S) - \frac{1}{i}  \frac{\d}{\d k} A_\sigma (k + i \tau, S)  \right),
	\end{align*}
	with $A_\sigma ( k + i \tau, S)$ being
	\begin{align*}
		( k + i \tau)^{M/2}  \prod_{i = 1}^{M}  \frac{\chi_i^\sigma (k + i \tau,S)}{ |x(s_i) - x(s_{i+1})|^{1/2}}  a_1(( k + i \tau) |x(s_i) - x(s_{i+1})|) \cos \theta_i,
	\end{align*}
	and $a_1$ the Hankel amplitude which has the asymptotic expansion \ref{Hankel}. It is shown in \cite{Zel09} (Corollary 3.9 and the proof of Proposition 3.10, together with Lemma 5.8) that
	\begin{align*}
		\int_{\d \Omega^M} e^{i (k + i \tau) \LL(S)} a_0^\sigma ((k+i \tau), S) dS = \int_{\d \Omega^{M - |\sigma|}} e^{i (k + i\tau) \LL(S)} b_0^\sigma ((k+i \tau), S) dS,
	\end{align*}
	for a symbol $b_0^\sigma \in S_\varrho^{-|\sigma|}(\d \Omega^{M - |\sigma|})$ having an asymptotic expansion of the form
	\begin{align*}
		b_0^\sigma(S, k + i \tau) \sim \sum_p b_{0,p}^\sigma(S) k^{- \sigma - p},
	\end{align*}
	with each $b_{0,p}^\sigma$ depending on at most $p$ derivatives of the boundary curvature.  From this, an application of the stationary phase lemma gives
	\begin{align*}
		\int_{\d \Omega^M} e^{i k \LL(S)} a_0^\sigma ((k+i \tau), S) dS \sim \left(\frac{2\pi}{k}\right)^{\frac{M - |\sigma|}{2}} \frac{e^{i k \LL(S_\gamma)} e^{i\pi \sgn \d^2 \LL(S_\gamma)/4}}{\sqrt{|\det \d^2 \LL(S_\gamma)|}} \sum_{j} c_{j, \sigma} k^{-j - |\sigma|}
	\end{align*}
	where $c_{j, \sigma}$ has $3j$ inverse Hessians. Having localized $\hat{\rho}$ near $L_\gamma$, the integral is $O(k^{-\infty})$ unless $M - |\sigma| = q$, in which case the coefficient of any $k^{-j_0}$ in the expansion has $3(j_0 - |\sigma|)$ inverse Hessians. After separating out the prefactor and using the fact that the derivatives of angles at reflection points depend only on the first order curvature jet, we see that the nonzero $\sigma$ terms contribute $3j - 3|\sigma|$ inverse Hessians multiplied by a coefficient which depends smoothly on parameters and a priori on the first $6(j - |\sigma|)$ derivatives of the boundary curvature. However, it is shown in \cite{Zel09} that this can be reduced to $2j$ derivatives of boundary curvature, essentially because in \ref{Zk} and \ref{Zk2}, powers of the higher order phase expansion $g^\mu$ vanish to order $3\mu$. It takes $3\mu$ derivatives to remove this zero, which leaves at most $2j - \mu$ derivatives left to act on the phase or amplitude. The amplitude only depends on the $0$-jet of $\ka$ and $g$ depends on the $1$-jet, but only appears when $\mu \neq 0$. In either case, at most $2j$ derivatives of $\ka$ emerge.
%	\red{Need to find out what the regularized symbol $b_0^\sigma$ is -- Zelditch computes it by integrating out diagonal singularities, but I don't think it has any inverse Hessians. I think dependence on parameters should be smooth though, in the same way as $a_0 = b_0^0$ is.}
	\\
	\\
	We now restrict our attention to the case $|\sigma| = 0$. Since $|x(s_i) - x(s_{i+1})| \leq \text{diam}(\Omega) \leq |\d \Omega|$, we have that for $S$ near $S_\gamma$,
	\begin{align*}
		a_0(k + i \tau, S) = b_0^0(S, k + i \tau) = \left(\frac{k + i \tau}{2 \pi}\right)^{\frac{q}{2}} \LL(S) \prod_{i = 1}^q \frac{\cos \theta_i }{|x(s_i) - x(s_{i+1})|} + \mathcal{R}_{a_0, 0}(k + i \tau, S),
	\end{align*}
	where the remainder satisfies
	\begin{align*}
		|\d_z^\alpha \d_k^\beta \mathcal{R}_{a_0, 0}(k + i \tau, S)| \leq C_{\alpha, \beta, \varrho, |\d \Omega|, q} \langle k \rangle^{\varrho |\alpha| - |\beta| - \frac{q}{2} - 1}
	\end{align*}
	uniformly and with smooth dependence on parameters, including $\eps$ and the full jet of $\ka$. Here, we have chosen only leading order terms in the asymptotic expansion of $a_1$. The contributions of $\mathcal{R}_{a_0, 0}$ and $\frac{\d}{\d k} A_\sigma \in S_\varrho^{-1}$ to the $k^{-j}$ term in the stationary phase expansion have submaximal inverse Hessians for essentially the same reason as the $|\sigma| \geq 1$ terms: an asymptotic expansion of either which starts with $k^{-\wt{j}}$ ($\wt{j} \geq 1$) contributes to the $k^{-j}$ term in stationary phase expansion through the operator $L_{j - \wt{j}}$, which has at most $3(j - \wt{j})$ inverse Hessians.
	\\
	\\
	Thus, for the purposes of calculating wave invariants modulo lower order inverse Hessians, we may replace the integral in Proposition \ref{SP1} by
	\begin{align}\label{HO}
		%\int_{\d \Omega^M} e^{i k \LL(S)} a_0^\sigma ((k+i \tau), S) dS \sim
		\left(\frac{k + i \tau}{2\pi}\right)^{\frac{q}{2}} \int_{\d \Omega^q} e^{i (k + i\tau) \LL(S)} \LL(S)  \prod_{i = 1}^q \frac{\cos \theta_i }{|x(s_i) - x(s_{i+1})|} dS,
	\end{align}
	%	\begin{align*}
		%		\frac{1}{2} \sum_{M = 1}^\infty \sum_{\sigma: \Z_M \to \{0,1\}} \frac{e^{i(M+2)\pi i/4}}{M} \int_{\d \Omega^M} e^{i k \LL(S)} a_0^0 ((k+i \tau), S) dS \sim
		%	\end{align*}
		to which we apply the stationary phase lemma with parameter $\tau \in [0,1]$. As noted in Section \ref{Balian-Bloch Invariants Section}, holomorphy of the regularized resolvent trace allows us to take $\tau \to 0^+$. Denote by $\wt{a}(\lambda, S)$ the integrand of \ref{HO}, corresponding to the amplitude with $|\sigma| = 0$ and highest powers of $\eps^{-1}$. To find explicitly the stationary phase coefficients, we apply the Feynman rules. In Corollary \ref{FAmp}, we saw the terms which appear in each Feynman amplitide. Most labeled diagrams contribute a Feynman amplitude of {zero}, since the derivatives of $\LL$ are only nonzero if $\max\{|i_1 - i_2|, |i_1 - i_3|, |i_2-i_3|\} \leq 1$. Hence, the combinatorial constant depends only on the order of the automorphism group of $3$-regular graphs on $2j$ vertices.
%		and also on the number of labelings which have all indices at most $\pm 1$ apart.
%		Recall from Proposition \ref{coord} that
%	\begin{align*}
%		\frac{\d^3 \ell_{p,q}}{\d s^3}(s_{\gamma, 1}) =\sum_{i_1, i_2, i_3 = 1}^{q} \d_{i_1, i_2, i_3}^3 \LL(S_\gamma) + \text{remainder}.
%	\end{align*}
	Plugging in $\wt{a}$ and $\LL$ to the formula in Proposition \ref{Feynman}, and considering a $3$-regular graph $\mathcal{G}$, we sum over all labelings to obtain the Feynman amplitude
	\begin{align*}
		\sum_{\ell} F_{\mathcal{G},\ell} =& \sum_{\ell} \wt{a}(S_\gamma) \prod_{\text{edges}} \prod_{\text{vertices}} \left(i k \d_{i_1(\ell), i_2(\ell), i_3(\ell)} ^3 \LL \right) \left(\frac{i}{k} \wt{h}_{j_1(\ell) j_2(\ell)}\right)\\
%		=& i^j \wt{a}(S_\gamma) M_\gamma^{3j} k^{-j} \prod_{\text{vertices}}  \sum_{i_1, i_2, i_3} \d_{i_1, i_2, i_3}^3 \LL(S_\gamma)\\
		= &  (\pm i)^j k^{-j} \left(c_\gamma \eps\right)^{-3j} \wt{a}(S_\gamma)  \left(\sum_{i_1, i_2, i_3} h_{i_1} h_{i_2} h_{i_3} \d_{i_1, i_2, i_3}^3 \LL(S_\gamma) \right)^{2j}.
%		= &  \wt{a}(S_\gamma) M_\gamma^{3j} K^{2j}(S_\gamma) k^{-j}
	\end{align*}
	Putting $M = q$, $|\sigma| = 0$, dividing by the order of the automorphism groups, summing over all Feynman diagrams and collecting remainder terms finishes the proof.
\end{proof}

\section{Future directions}
As mentioned in the introduction, this paper is part of a series in which we aim to show that the singular support of the wave trace and the length spectrum are inherently different objects. Our subsequent paper will produce cancellations to arbitrarily high orders in the wave trace. It would also be interesting to both derive degenerate asymptotics and create cancellations in:
\begin{itemize}
	\item the wave trace for Riemannian manifolds without boundary,
	\item the semiclassical trace formula for the density of states of a Schr\"odinger operator on a Riemannian manifold with or without boundary, and
	\item and the semiclassical trace formula for eigenvalues of large graphs.
\end{itemize}
In the setting of convex planar domains considered in the present paper, the billiard ball map forms a sort of global Poincar\'e expression for the billiard \textit{flow}, which consists of broken bicharacteristics. Formally, our result can then be interpreted as an asymptotic expansion of the trace of a certain boundary operator which quantizes the Poincar\'e map, analogous to the approach in \cite{SZ} and \cite{ISZ2}. The authors there use an abstract reduction to a Grushin problem and in future work, it would be interesting to compare their method with the layer potential approach we employ here.

\section{Acknowledgements}
The first and second authors acknowledge the partial support of the ERC Grant \#885707. The third author is grateful to IST Austria and the Schr\"odinger Institute in Vienna for hosting him during much of the writing of this paper. The authors would also like to thank Hamid Hezari for suggesting the Balian-Bloch approach and the late Steven Morris Zelditch for many helpful conversations in the begining of this project. We are grateful to have known Zelditch for many years and are deeply saddened by his passing. His contributions have made a profound impact on many fields in mathematics. His energy, passion and enthusiasm uplifted all those around him. He was a great inspiration to many mathematicians, including ourselves.

	\bibliographystyle{alpha}
	\bibliography{MB1}

\end{document}